\theoremstyle{plain}
\newtheorem{thm}{Theorem}[section]
\newtheorem{prop}[thm]{Proposition}
\newtheorem{lem}[thm]{Lemma}
\theoremstyle{definition}
\newtheorem{defn}[thm]{Definition}
\newtheorem{nota}[thm]{Notation}
\theoremstyle{plain}
\newtheorem{thms}{Theorem}[subsection]
\newtheorem{props}[thms]{Proposition}
\newtheorem{lems}[thms]{Lemma}
\theoremstyle{definition}
\begin{document} 
\title[On a minimal set of generators for the polynomial algebra]
{On a minimal set of generators for\\ the polynomial algebra of five variables\\ as a module over the Steenrod algebra}

 \author{\DJ\d{\u a}ng V\~o Ph\'uc and Nguy\~\ecircumflex n Sum$^{1}$}

\footnotetext[1]{Corresponding author.}
\footnotetext[2]{2000 {\it Mathematics Subject Classification}. Primary 55S10; 55S05, 55T15.}
\footnotetext[3]{{\it Keywords and phrases:} Steenrod squares, Peterson hit problem, Polynomial algebra.}

\bigskip
\begin{abstract}
 Let $P_k$ be the graded polynomial algebra $\mathbb F_2[x_1,x_2,\ldots ,x_k]$ over the prime field of two elements, $\mathbb F_2$, with the degree of each $x_i$ being 1. We study the {\it hit problem}, set up by Frank Peterson, of determining a minimal set of generators for $P_k$ as a module over the mod-$2$ Steenrod algebra, $\mathcal{A}.$ In this paper, we explicitly determine a minimal set of $\mathcal{A}$-generators for $P_k$ in the case $k=5$ and the degree $4(2^d - 1)$ with $d$ an arbitrary positive integer.
\end{abstract}
\maketitle

\section{Introduction}\label{s1}
\setcounter{equation}{0}

 Let $P_k$ be the graded polynomial algebra $\mathbb F_2[x_1,x_2,\ldots ,x_k]$, with the degree of each $x_i$
being 1. This algebra arises as the cohomology with coefficients in $\mathbb F_2$ of an elementary abelian 2-group of rank $k$. Then, $P_k$ is a module over the mod-2 Steenrod algebra, $\mathcal A$.  
The action of $\mathcal A$ on $P_k$ is determined by the elementary properties of the Steenrod squares $Sq^i$ and subject to the Cartan formula (see Steenrod and Epstein~\cite{st}).

An element $g$ in $P_k$ is called {\it hit} if it belongs to  $\mathcal{A}^+P_k$, where $\mathcal{A}^+$ is the augmentation ideal of $\mathcal A$. That means $g$ can be written as a finite sum $g = \sum_{u\geqslant 0}Sq^{2^u}(g_u)$ for suitable polynomials $g_u \in P_k$.  

We study the {\it Peterson hit problem} of determining a minimal set of generators for the polynomial algebra $P_k$ as a module over the Steenrod algebra. In other words, we want to determine a basis of the $\mathbb F_2$-vector space $QP_k := P_k/\mathcal A^+P_k = \mathbb F_2 \otimes_{\mathcal A} P_k$. The problem  is an  interesting  and  important  one.
It was first studied by Peterson~\cite{pe}, Wood~\cite{wo}, Singer~\cite {si1}, 
and Priddy~\cite{pr}, who showed its relation to several classical problems respectively in cobordism theory, modular representation theory, the Adams spectral sequence for the stable homotopy of spheres, and stable homotopy type of classifying spaces of finite groups. Then, this problem  was investigated by many authors (see Boardman \cite{bo}, Bruner, H\`a and H\uhorn ng ~\cite{br}, Carlisle and Wood~\cite{cw}, Crabb and Hubbuck~\cite{ch}, H\uhorn ng \cite{hu}, H\uhorn ng and Nam~\cite{hn2}, Janfada and Wood~\cite{jw1}, Kameko~\cite{ka}, Mothebe \cite{mo}, Nam~\cite{na}, Repka and Selick~\cite{res}, Silverman~\cite{sl}, Silverman and Singer~\cite{ss}, Singer~\cite{si2}, Walker and Wood~\cite{wa,wa2}, Wood~\cite{wo2}, the second named author \cite{su2,su5} and others).

From the results of Wood \cite{wo} and Kameko \cite{ka}, the hit problem is reduced to the case of degree $n$ of the form
\begin{equation} \label{ct1.1}n =  s(2^d-1) + 2^dm,
\end{equation}
where $s, d, m$ are non-negative integers and $1 \leqslant s <k$ (see \cite{su5}.) For $s=k-1$ and $m > 0$, the problem was studied by Crabb and Hubbuck~\cite{ch}, Nam~\cite{na}, Repka and Selick~\cite{res} and the second named author ~\cite{su2,su5}.  

In the present paper, we explicitly determine the hit problem in degree $n$ of the form (\ref{ct1.1}) with $s=k-1=4$, $m=0$ and $d$ an arbitrary positive integer. The main result of the paper is the following.

\medskip\noindent
{\bf Main Theorem.} {\it  Let $n= 4(2^d-1)$ with $d$ a positive integer. The dimension of the $\mathbb F_2$-vector space $(QP_5)_{n}$ is determined by the following table}:

\medskip
\centerline{\begin{tabular}{c|ccccc}
$n = 4(2^d-1)$&$d=1$ & $d=2$ & $d=3$ & $d=4$ & $d\geqslant 5$\cr
\hline
\ $\dim(QP_5)_n$ & $45$ & $190$ & $480$ &$650$& $651$ \cr
\end{tabular}}

\bigskip
 In Section \ref{s2}, we recall  some needed information on the admissible monomials in $P_k$, Singer's criterion on the hit monomials  and Kameko's homomorphism.  The proof of Main Theorem is presented in Section~ \ref{s3}.

\section{Preliminaries}\label{s2}
\setcounter{equation}{0}

In this section, we recall some needed information from Kameko~\cite{ka}, Singer \cite{si2} and the second named author \cite{su5} which will be used in the next section.

\begin{nota} We denote $\mathbb N_k = \{1,2, \ldots , k\}$ and
\begin{align*}
X_{\mathbb J} = X_{\{j_1,j_2,\ldots , j_s\}} =
 \prod_{j\in \mathbb N_k\setminus \mathbb J}x_j , \ \ \mathbb J = \{j_1,j_2,\ldots , j_s\}\subset \mathbb N_k,
\end{align*}
In particular, we set
\begin{align*}
&X_{\mathbb N_k} =1,\\
&X_\emptyset = x_1x_2\ldots x_k,\\ 
&X_j = x_1\ldots \hat x_j \ldots x_k, \ 1 \leqslant j \leqslant k,
\end{align*}
and  $X:=X_k \in P_{k-1}.$

Let $\alpha_i(a)$ denote the $i$-th coefficient  in dyadic expansion of a non-negative integer $a$. That means
$$a= \alpha_0(a)2^0+\alpha_1(a)2^1+\alpha_2(a)2^2+ \ldots ,$$ 
for $ \alpha_i(a) =0$ or 1 with $i\geqslant 0$. 

Let $x=x_1^{a_1}x_2^{a_2}\ldots x_k^{a_k} \in P_k$. Denote $\nu_j(x) = a_j, 1 \leqslant j \leqslant k$.  
Set 
$$\mathbb J_t(x) = \{j \in \mathbb N_k :\alpha_t(\nu_j(x)) =0\},$$
for $t\geqslant 0$. Then, we have
$x = \prod_{t\geqslant 0}X_{\mathbb J_t(x)}^{2^t}.$ 
\end{nota}
\begin{defn}
For a monomial  $x \in P_k$,  define two sequences associated with $x$ by
\begin{align*} 
\omega(x)&=(\omega_1(x),\omega_2(x),\ldots , \omega_i(x), \ldots),\\
\sigma(x) &= (\nu_1(x),\nu_2(x),\ldots ,\nu_k(x)),
\end{align*}
where
$\omega_i(x) = \sum_{1\leqslant j \leqslant k} \alpha_{i-1}(\nu_j(x))= \deg X_{\mathbb J_{i-1}(x)},\ i \geqslant 1.$
The sequences $\omega(x)$ and $\sigma(x)$ are respectively called  the weight vector and the exponent vector of $x$. 

Let $\omega=(\omega_1,\omega_2,\ldots , \omega_i, \ldots)$ be a sequence of non-negative integers.  The sequence $\omega$  is called  the weight vector if $\omega_i = 0$ for $i \gg 0$.
\end{defn}

The sets of the weight vectors and the exponent vectors are given the left lexicographical order. 

  For a  weight vector $\omega$,  we define $\deg \omega = \sum_{i > 0}2^{i-1}\omega_i$.  If  there are $i_0=0, i_1, i_2, \ldots , i_r > 0$ such that 
\begin{align*}
&i_1 + i_2 + \ldots + i_r  = m,\\
&\omega_{i_0+\ldots +i_{s-1} + t} = b_s, 1 \leqslant t \leqslant i_s, 1 \leqslant s \leqslant  r, 
\end{align*}
and $\omega_i=0$ for all $i > m$, then we write $\omega = (b_1^{(i_1)},b _2^{(i_2)},\ldots , b_r^{(i_r)})$. Denote $b_u^{(1)} = b_u$.  For example, $\omega = (3,3,2,2,2,1,0,\ldots) = (3^{(2)},2^{(3)},1)$.

Denote by   $P_k(\omega)$ the subspace of $P_k$ spanned by all monomials $y$ such that
$\deg y = \deg \omega$, $\omega(y) \leqslant \omega$, and by $P_k^-(\omega)$ the subspace of $P_k$ spanned by all monomials $y \in P_k(\omega)$  such that $\omega(y) < \omega$. 

\begin{defn}\label{dfn2} Let $\omega$ be a weight vector and $f, g$ two polynomials  of the same degree in $P_k$. 

i) $f \equiv g$ if and only if $f - g \in \mathcal A^+P_k$. If $f \equiv 0$, then $f$ is called {\it hit}.

ii) $f \equiv_{\omega} g$ if and only if $f - g \in \mathcal A^+P_k+P_k^-(\omega)$. 
\end{defn}

Obviously, the relations $\equiv$ and $\equiv_{\omega}$ are equivalence ones. Denote by $QP_k(\omega)$ the quotient of $P_k(\omega)$ by the equivalence relation $\equiv_\omega$. Then, we have 
$$QP_k(\omega)= P_k(\omega)/ ((\mathcal A^+P_k\cap P_k(\omega))+P_k^-(\omega)).$$  

For a  polynomial $f \in  P_k$, we denote by $[f]$ the class in $QP_k$ represented by $f$. If  $\omega$ is a weight vector and $f \in  P_k(\omega)$, then denote by $[f]_\omega$ the class in $QP_k(\omega)$ represented by $f$. Denote by $|S|$ the cardinal of a set $S$.

It is easy to see that
$$ QP_k(\omega) \cong QP_k^\omega := \langle\{[x] \in QP_k : x \text{ \rm  is admissible and } \omega(x) = \omega\}\rangle. $$
So, we get
$$(QP_k)_n = \bigoplus_{\deg \omega = n}QP_k^\omega \cong \bigoplus_{\deg \omega = n}QP_k(\omega).$$
Hence, we can identify the vector space $QP_k(\omega)$ with $QP_k^\omega \subset QP_k$. 

\begin{defn}\label{defn3} 
Let $x, y$ be monomials of the same degree in $P_k$. We say that $x <y$ if and only if one of the following holds:  

i) $\omega (x) < \omega(y)$;

ii) $\omega (x) = \omega(y)$ and $\sigma(x) < \sigma(y).$
\end{defn}

\begin{defn}
A monomial $x$ in $P_k$ is said to be inadmissible if there exist monomials $y_1,y_2,\ldots, y_t$ such that $y_j<x$ for $j=1,2,\ldots , t$ and $x - \sum_{j=1}^ty_j \in \mathcal A^+P_k.$ 
A monomial $x$ is said to be admissible if it is not inadmissible.
\end{defn} 

Obviously, the set of all the admissible monomials of degree $n$ in $P_k$ is a minimal set of $\mathcal{A}$-generators for $P_k$ in degree $n$. 
\begin{defn} 
 A monomial $x$ in $P_k$ is said to be strictly inadmissible if and only if there exist monomials $y_1,y_2,\ldots, y_t$ such that $y_j<x,$ for $j=1,2,\ldots , t$ and 
$$x = \sum_{j=1}^t y_j + \sum_{u=1}^{2^s-1}Sq^u(h_u)$$ 
with $s = \max\{i : \omega_i(x) > 0\}$ and suitable polynomials $h_u \in P_k$.
\end{defn}

It is easy to see that if $x$ is strictly inadmissible, then it is inadmissible.

\begin{thm}\label{dlcb1} {\rm (Kameko \cite{ka}, Sum \cite{su2})}  
 Let $x, y, w$ be monomials in $P_k$ such that $\omega_i(x) = 0$ for $i > r>0$, $\omega_s(w) \ne 0$ and   $\omega_i(w) = 0$ for $i > s>0$.

{\rm i)}  If  $w$ is inadmissible, then  $xw^{2^r}$ is also inadmissible.

{\rm ii)}  If $w$ is strictly inadmissible, then $wy^{2^{s}}$ is also strictly inadmissible.
\end{thm}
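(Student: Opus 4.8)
The plan is to prove both parts directly from the definitions of (strict) inadmissibility, using Theorem \ref{dlcb1}'s hypotheses on the weight vectors to control how the Steenrod operations interact with the factorization into a "low" part and a "high" $2^r$-th (resp. $2^s$-th) power. The key structural fact I would invoke throughout is the observation, implicit in the setup, that a monomial $z$ decomposes as $z = \prod_{t\geqslant 0} X_{\mathbb J_t(z)}^{2^t}$, so that the condition $\omega_i(x)=0$ for $i>r$ means precisely that every variable in $x$ has all dyadic coefficients vanishing from position $r$ onwards; equivalently, $x$ involves only exponents $< 2^r$. Consequently, in a product $xw^{2^r}$ the exponent vector splits cleanly: the bottom $r$ "digits" come entirely from $x$ and the digits from position $r$ on come entirely from $w$. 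I would also need the elementary fact about the Cartan formula that $Sq^{j}(w^{2^r}) = \bigl(Sq^{j/2^r}(w)\bigr)^{2^r}$ when $2^r \mid j$ and $Sq^j(w^{2^r})=0$ otherwise, together with the product rule $Sq^j(x w^{2^r}) = \sum_{a+b=j} Sq^a(x)\,Sq^b(w^{2^r})$.

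For part (ii), suppose $w$ is strictly inadmissible with $s=\max\{i:\omega_i(w)>0\}$, so there is a relation $w = \sum_{l} y_l + \sum_{u=1}^{2^s-1} Sq^u(h_u)$ with each $y_l < w$. First I would raise this identity to the $2^s$-th power; using that squaring is $\mathbb F_2$-linear and commutes with $Sq$ up to reindexing, $w^{2^s} = \sum_l y_l^{2^s} + \sum_{u=1}^{2^s-1} Sq^{u 2^s}(h_u^{2^s})$, and then multiply by $y^{2^s}$... but here one must be careful: we want a relation for $wy^{2^s}$, not $w^{2^s}y^{2^s}$. Instead, I would apply the product formula the other way: write $w y^{2^s} = \sum_l y_l y^{2^s} + \sum_{u=1}^{2^s-1} \bigl( Sq^u(h_u y^{2^s}) + \sum_{0<a\leqslant u} Sq^{u-a}(h_u) Sq^{a}(y^{2^s})\bigr)$. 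Since $a < 2^s$ forces $Sq^a(y^{2^s})=0$ unless $a=0$, all the correction terms vanish, giving $wy^{2^s} = \sum_l y_l y^{2^s} + \sum_{u=1}^{2^s-1} Sq^u(h_u y^{2^s})$. It remains to check $y_l y^{2^s} < w y^{2^s}$ and that $\max\{i:\omega_i(wy^{2^s})>0\}$ is still $2^s$ — wait, it will now be $s$ plus the top of $\omega(y)$; so in fact I would need $\max\{i:\omega_i(wy^{2^s})>0\} \geqslant s$, which holds since $\omega_s(w)\neq 0$ implies $\omega_s(wy^{2^s})\ne 0$ as the $s$-th digit of $y^{2^s}$ is $0$... actually $\omega_s(wy^{2^s}) = \omega_s(w)$ because $y^{2^s}$ contributes $0$ to digits below $s$. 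So $s \leqslant \max\{i:\omega_i(wy^{2^s})>0\}$, and the hit terms $Sq^u(h_u y^{2^s})$ with $u \leqslant 2^s-1$ are admissible for the definition of strict inadmissibility. For the order comparison, I would use Definition \ref{defn3}: multiplying by $y^{2^s}$ shifts the weight vector of $w$ (and of $y_l$) by adding $\omega(y)$ in positions $>s$, and since $\omega_i(y_l)=\omega_i(w)=0$ for $i>s$, the comparison $y_l<w$ is preserved verbatim at the level of both $\omega$ and $\sigma$.

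For part (i), I would argue by strong induction on the degree (or on $r$). Since $w$ is inadmissible, there exist $y_l < w$ with $w - \sum_l y_l \in \mathcal A^+P_k$; applying the hit-monomial machinery I can assume after iterating that the $y_l$ are admissible, so it suffices to show $x w^{2^r}$ is inadmissible whenever $w$ fails to be admissible, by producing monomials strictly below $xw^{2^r}$ that differ from it by a hit element. The cleanest route: it is enough to treat the case where $w$ is \emph{strictly} inadmissible and then combine with part (ii) and Kameko's spike/reduction argument. Concretely, if $w = \sum_l y_l + \sum_{u=1}^{2^{s'}-1}Sq^u(h_u)$ (with $s' = \max\{i : \omega_i(w)>0\} \leqslant r$ — note $s'\leqslant r$ because $\omega_i(w)=0$ for $i>s>0$ and we are told this $s$ equals the relevant top index, and crucially $s' \le r$ is \emph{not} automatic, so I would in fact not assume it), then $x w^{2^r} = \sum_l x\, y_l^{2^r} + \sum_u x\, Sq^u(h_u)^{\,?}$ — the subtlety is that now $Sq^u$ with small $u$ does interact with $w^{2^r}$ only through its $2^r$-divisible part, giving $Sq^{u 2^r}((h_u)^{2^r})$, so $xw^{2^r} = \sum_l x y_l^{2^r} + \sum_u x\, Sq^{u2^r}(h_u^{2^r})$ and I expand $x\,Sq^{u 2^r}(h_u^{2^r}) = Sq^{u2^r}(x h_u^{2^r}) + \sum_{0<a} Sq^{u2^r - a}(x)\cdot Sq^a(h_u^{2^r})$; again $Sq^a(h_u^{2^r})=0$ for $0<a<2^r$, and the remaining terms with $a$ a multiple of $2^r$, say $a = b 2^r$, contribute $Sq^{(u-b)2^r}(x)\cdot (Sq^b(h_u))^{2^r}$ which are of lower weight and I would absorb them into the induction. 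This reduces $[xw^{2^r}]$ modulo hits and modulo lower terms to $\sum_l [x y_l^{2^r}]$ with $x y_l^{2^r} < x w^{2^r}$, establishing inadmissibility.

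The main obstacle is the order-comparison bookkeeping in part (i): after pushing the Steenrod operations past $x$, the "error" terms $Sq^{(u-b)2^r}(x)(Sq^b h_u)^{2^r}$ need not individually be monomials below $xw^{2^r}$ in a transparent way, so one must argue that they lie in $P_k^-$ of the appropriate weight vector, i.e. strictly decrease $\omega$, and then invoke the inductive hypothesis in lower degree to finish them off. Handling that cleanly — making the induction hypothesis strong enough (an inadmissibility statement for all monomials of smaller degree with the analogous weight-vector splitting) while keeping track that the top index $s$ of $w$ may exceed $r$, in which case the factorization digit-splitting argument must be refined — is where the real work lies; everything else is a direct unwinding of the Cartan formula together with the vanishing $Sq^a((\,\cdot\,)^{2^r})=0$ for $0<a<2^r$.
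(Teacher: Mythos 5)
The paper states Theorem \ref{dlcb1} without proof (it is quoted from Kameko's thesis and from \cite{su2}), so there is no internal argument to compare yours against; I will judge the proposal on its own terms. Your part (ii) is correct: the Cartan formula together with $Sq^a(y^{2^s})=0$ for $0<a<2^s$ yields $wy^{2^s}=\sum_l y_ly^{2^s}+\sum_{u=1}^{2^s-1}Sq^u(h_uy^{2^s})$, the top nonzero index of $\omega(wy^{2^s})$ is at least $s$ because $\omega_s(wy^{2^s})=\omega_s(w)\ne 0$, and $y_l<w$ passes to $y_ly^{2^s}<wy^{2^s}$. One caveat: your justification ``since $\omega_i(y_l)=\omega_i(w)=0$ for $i>s$'' is not available, as $y_l<w$ places no such bound on $\omega(y_l)$; the comparison survives anyway because the first index at which $\omega(y_l)$ and $\omega(w)$ can differ is necessarily $\leqslant s$ (above $s$ one would need $\omega_{i}(y_l)<\omega_i(w)=0$), and the dyadic digits of $\nu_j(\cdot)+2^s\nu_j(y)$ below position $s$ are unaffected by the addition.

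Part (i) has a genuine gap. After writing $xw^{2^r}=\sum_jxz_j^{2^r}+\sum_uSq^{2^ru}(xg_u^{2^r})+\sum_{u}\sum_{0\leqslant b<u}Sq^{2^r(u-b)}(x)\,(Sq^b(g_u))^{2^r}$, everything hinges on showing that each monomial occurring in the cross terms is $<xw^{2^r}$; you assert that they ``strictly decrease $\omega$'' and propose to finish them by an induction on degree, but you neither prove the assertion nor set up an induction that reaches them (they are not visibly of the form $x_1w_1^{2^r}$ with $x_1$ of exponents $<2^r$, so the inductive hypothesis as you state it does not apply). The missing, and sufficient, fact is: if every exponent of $x$ is $<2^r$ and $m\geqslant1$, then every monomial $x'$ of $Sq^{2^rm}(x)$ satisfies $(\omega_1(x'),\ldots,\omega_r(x'))<(\omega_1(x),\ldots,\omega_r(x))$ in the left lexicographic order. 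Indeed, by the Cartan formula and Lucas's theorem $x'=\prod_ix_i^{a_i+j_i}$ with each $j_i$ dyadically dominated by $a_i=\nu_i(x)<2^r$; hence $2^r\mid j_i$ would force $j_i=0$, so some $j_i$ has a nonzero digit at a position $p<r$, and taking $p$ minimal gives $\omega_i(x')=\omega_i(x)$ for $i\leqslant p$ and $\omega_{p+1}(x')<\omega_{p+1}(x)$. Since the first $r$ weight coordinates of $x'(g')^{2^r}$ are those of $x'$ and those of $xw^{2^r}$ are those of $x$, every cross-term monomial is $<xw^{2^r}$ by Definition \ref{defn3}(i); combined with $xz_j^{2^r}<xw^{2^r}$ (same order-preservation argument as in (ii)), this exhibits $xw^{2^r}$ as inadmissible directly from the definition, with no induction needed. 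Your preliminary suggestion to reduce part (i) to the strictly inadmissible case and then apply part (ii) is a dead end: an inadmissible monomial need not be reachable through strictly inadmissible ones, which is exactly why the two notions are kept separate.
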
 

Now, we recall a result of Singer \cite{si2} on the hit monomials in $P_k$. 

\begin{defn}\label{spi}  A monomial $z$ in $P_k$   is called a spike if $\nu_j(z)=2^{d_j}-1$ for $d_j$ a non-negative integer and $j=1,2, \ldots , k$. If $z$ is a spike with $d_1>d_2>\ldots >d_{r-1}\geqslant d_r>0$ and $d_j=0$ for $j>r,$ then it is called the minimal spike.
\end{defn}

For a positive integer $n$, by $\mu(n)$ one means the smallest number $r$ for which it is possible to write $n = \sum_{1\leqslant i\leqslant r}(2^{d_i}-1),$ where $d_i >0$. 
In \cite{si2}, Singer showed that if $\mu(n) \leqslant k$, then there exists uniquely a minimal spike of degree $n$ in $P_k$. 
The following is a criterion for the hit monomials in $P_k$.

\begin{thm}\label{dlsig} {\rm (Singer~\cite{si2})}
Suppose $x \in P_k$ is a monomial of degree $n$, where $\mu(n) \leqslant k$. Let $z$ be the minimal spike of degree $n$. If $\omega(x) < \omega(z)$, then $x$ is hit.
\end{thm}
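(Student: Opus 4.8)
The plan is to reconstruct Singer's proof, which runs as a descent on the weight vector organised as a double induction: an outer induction on the degree $n=\deg x$, and, for each fixed $n$, an inner induction on the monomial $x$ with respect to the order $<$ of Definition \ref{defn3}, restricted to the (finite, totally ordered) set of monomials $y$ with $\deg y=n$ and $\omega(y)<\omega(z)$. Two structural facts should be recorded first. By Definition \ref{spi}, the weight vector of the minimal spike $z$ is non-increasing with $\omega_1(z)=\mu(n)$; and deleting the bottom $\ell$ binary digit-layers of $z$ (any $\ell\geqslant 1$) yields a monomial that is again of minimal-spike shape, hence is the minimal spike of its own degree.

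For the inductive step, let $x$ be a monomial of degree $n$ with $\omega(x)<\omega(z)$, and let $j\geqslant 1$ be least with $\omega_j(x)<\omega_j(z)$. Suppose $j\geqslant 2$. Write $x=u\,v^{2^{j-1}}$, where $u=\prod_{0\leqslant t\leqslant j-2}X_{\mathbb J_t(x)}^{2^t}$ collects the bottom $j-1$ digit-layers (so $\omega_i(u)=0$ for $i\geqslant j$) and $v=\prod_{t\geqslant 0}X_{\mathbb J_{t+j-1}(x)}^{2^t}$ is the remainder. One checks that $1\leqslant\deg v<n$, that the minimal spike of degree $\deg v$ is exactly the ``top part'' of $z$ described above (so in particular $\mu(\deg v)\leqslant\omega_1(z)=\mu(n)\leqslant k$), and that $\omega(v)<\omega(\text{that spike})$ because its leading entry is $\omega_j(x)<\omega_j(z)$. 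By the outer induction $v$ is hit, hence inadmissible, and Theorem \ref{dlcb1}(i) then makes $x=u\,v^{2^{j-1}}$ inadmissible. Therefore $x\equiv\sum_t y_t$ for monomials $y_t<x$ of degree $n$ with $\omega(y_t)\leqslant\omega(x)<\omega(z)$; each $y_t$ is hit by the inner induction, so $x$ is hit.

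The remaining, and genuinely hard, case is $j=1$, i.e. $\omega_1(x)<\mu(n)$, which does not reduce through Theorem \ref{dlcb1} and carries all the arithmetic content. Here I would factor $x=X_{\mathbb J_0(x)}\,\bar x^{\,2}$, with $X_{\mathbb J_0(x)}$ the square-free part, $\deg X_{\mathbb J_0(x)}=\omega_1(x)$, and $\bar n:=\deg\bar x=(n-\omega_1(x))/2$, and produce $x$ explicitly inside $\mathcal A^+P_k$ modulo monomials $y$ with $\omega(y)\leqslant\omega(x)$ (which the inner induction then removes). The tools are the Cartan formula, the $\chi$-trick $g\cdot\chi(Sq^i)(h)\equiv Sq^i(g)\cdot h\pmod{\mathcal A^+P_k}$ (with $\chi$ the antipode of $\mathcal A$), which absorbs the factor $X_{\mathbb J_0(x)}$, and the combinatorial properties of the function $\mu$ that the hypothesis $\omega_1(x)<\mu(n)$ encodes about $\bar n$ — in particular Wood's theorem \cite{wo} that a monomial of degree $m$ with $\mu(m)>k$ is hit, which disposes of $\bar x$ outright in the sub-cases where $\mu(\bar n)>k$. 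Pinning down the required $\mu$-arithmetic and checking that every monomial produced by the construction stays $\leqslant\omega(x)$ in the order of Definition \ref{defn3} is exactly where the difficulty concentrates; the rest is the formal descent above.
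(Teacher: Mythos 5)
The paper offers no proof of Theorem \ref{dlsig}: it is quoted from Singer \cite{si2} and used as a black box, so there is no internal argument to measure your reconstruction against; I can only assess it on its own terms. Your double induction and the case $j\geqslant 2$ are sound: the factorisation $x=u\,v^{2^{j-1}}$, the identification of the truncated spike as the minimal spike of degree $\deg v$ (asserted rather than proved, but true and routine from Definition \ref{spi}), the inequality $\omega_1(v)=\omega_j(x)<\omega_j(z)$ giving the hypothesis of the outer induction, and the appeal to Theorem \ref{dlcb1}(i) all work, and the residual monomials $y_t<x$ are indeed absorbed by the inner induction.

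The genuine gap is the case $j=1$, which you do not prove: you list the tools you ``would'' use and state that this is ``exactly where the difficulty concentrates.'' Since the case $j\geqslant 2$ merely reduces the theorem to the case $j=1$, what you have written is a reduction, not a proof. For the record, the missing case closes cleanly and leaves no residual terms for the inner induction to absorb. Write $x=e\,\bar x^{\,2}$ with $e=X_{\mathbb J_0(x)}$ square-free of degree $a=\omega_1(x)<\mu(n)$. Then $x=e\cdot Sq^{|\bar x|}(\bar x)\equiv\chi(Sq^{|\bar x|})(e)\cdot\bar x$ by the $\chi$-trick. The Cartan formula for $\chi(Sq^{m})$ together with $\chi(Sq^{i})(x_j)=x_j^{2^{c}}$ for $i=2^{c}-1$ and $\chi(Sq^{i})(x_j)=0$ otherwise shows that $\chi(Sq^{m})(e)\neq 0$ forces $m=\sum_{l=1}^{a}(2^{c_l}-1)$ with $c_l\geqslant 0$, i.e.\ $\mu(m)\leqslant a$; and the elementary implication $\mu(m)\leqslant a\Rightarrow\mu(a+2m)\leqslant a$ (replace each $2^{c_l}-1$ by $2^{c_l+1}-1$), read contrapositively with $m=|\bar x|=(n-a)/2$, gives $\mu(|\bar x|)>a$, hence $\chi(Sq^{|\bar x|})(e)=0$ and $x$ is hit outright. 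Until this (or an equivalent) argument is actually carried out, with the $\mu$-arithmetic proved, your proposal establishes only the reduction step and not the theorem.
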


This result implies the one of Wood, which originally is a conjecture of Peterson~\cite{pe}.
 
\begin{thm}\label{dlmd1} {\rm (Wood~\cite{wo})}
If $\mu(n) > k$, then $(QP_k)_n = 0$.
\end{thm}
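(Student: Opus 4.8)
The plan is to prove the statement in its strongest monomial form: I will show that if $\mu(n)>k$, then \emph{every} monomial of degree $n$ in $P_k$ lies in $\mathcal A^+P_k$, which is precisely the assertion $(QP_k)_n=0$ (since $(QP_k)_n$ is spanned by the classes of monomials). Thus it suffices to treat a single arbitrary monomial $x$ of degree $n$. The difficulty is that Theorem~\ref{dlsig} requires a minimal spike of degree $n$, and such a spike exists only when $\mu(n)\leqslant k$; so the first move is to pass to a larger polynomial algebra in which a spike is available, apply Singer's criterion there, and then transport the conclusion back to $P_k$.

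Concretely, set $k'=\mu(n)$, a finite integer since $\mu(n)\leqslant n$, and note $k'>k$ while $\mu(n)\leqslant k'$. Then Singer's existence result quoted before Theorem~\ref{dlsig} provides a unique minimal spike $z$ of degree $n$ in $P_{k'}$, whose $\mu(n)=k'$ nonzero exponents are all of the odd form $2^{d_j}-1$. Consequently $\omega_1(z)$, the number of variables occurring to an odd power in $z$, equals $\mu(n)=k'$. Viewing $x$ inside $P_{k'}$ via the inclusion $P_k\hookrightarrow P_{k'}$ leaves its weight and exponent vectors unchanged, and the entry $\omega_1(x)$ counts the variables among $x_1,\ldots,x_k$ appearing to an odd power, so $\omega_1(x)\leqslant k<k'=\omega_1(z)$. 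By the left lexicographic order this forces $\omega(x)<\omega(z)$ irrespective of the remaining coordinates, and Theorem~\ref{dlsig}, applied in $P_{k'}$, shows that $x$ is hit in $P_{k'}$.

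It then remains to descend from $P_{k'}$ to $P_k$. For this I would use the algebra homomorphism $\pi\colon P_{k'}\to P_k$ determined by $\pi(x_j)=x_j$ for $1\leqslant j\leqslant k$ and $\pi(x_j)=0$ for $k<j\leqslant k'$. Because $\pi$ sends each generator $x_j$ either to a generator or to $0$, it is compatible with the total Steenrod square; that is, $\pi\circ Sq^i=Sq^i\circ\pi$ for all $i$, so $\pi$ is a map of $\mathcal A$-modules and in particular $\pi(\mathcal A^+P_{k'})\subseteq\mathcal A^+P_k$. Since $\pi$ restricts to the identity on $P_k$, applying $\pi$ to a hit equation $x=\sum_{u\geqslant 0}Sq^{2^u}(g_u)$ valid in $P_{k'}$ yields $x=\sum_{u\geqslant 0}Sq^{2^u}(\pi(g_u))$ in $P_k$, so $x$ is hit in $P_k$. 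As $x$ was an arbitrary monomial of degree $n$, this gives $(QP_k)_n=0$.

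I expect the only genuinely delicate points to be the two compatibility claims. The first, that the minimal spike of degree $n$ in $P_{k'}$ satisfies $\omega_1(z)=\mu(n)$, is immediate from Definition~\ref{spi}, since its $\mu(n)$ positive exponents are all odd. The second, that the retraction $\pi$ commutes with the Steenrod action so that hit-ness descends, is the crux: it rests on the multiplicativity of $Sq=\sum_i Sq^i$ together with the relation $Sq(x_j)=x_j+x_j^2$, which makes $\pi(Sq(x_j))=Sq(\pi(x_j))$ hold on generators and therefore, by the Cartan formula, on all of $P_{k'}$. Everything else reduces to the single comparison of weight vectors $\omega(x)<\omega(z)$ in the left lexicographic order.
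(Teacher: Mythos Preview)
Your argument is correct. The paper itself does not spell out a proof of this statement but simply remarks that Singer's criterion (Theorem~\ref{dlsig}) implies Wood's theorem; your embedding into $P_{k'}$ with $k'=\mu(n)$ followed by the $\mathcal A$-linear retraction $\pi\colon P_{k'}\to P_k$ is precisely the standard way to make that implication rigorous, and the one delicate inequality $\omega_1(x)\leqslant k<\mu(n)\leqslant\omega_1(z)$ is exactly what is needed (note that even the weaker bound $\omega_1(z)\geqslant\mu(n)$, which is immediate from the definition of $\mu$, already suffices).
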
 

One of the main tools in the study of the hit problem is  Kameko's homomorphism 
$\widetilde{Sq}^0_*: QP_k \to QP_k$. 
This homomorphism is induced by the $\mathbb F_2$-linear map, also denoted by  $\widetilde{Sq}^0_*:P_k\to P_k$, given by
$$
\widetilde{Sq}^0_*(x) = 
\begin{cases}y, &\text{if }x=x_1x_2\ldots x_ky^2,\\  
0, & \text{otherwise,} \end{cases}
$$
for any monomial $x \in P_k$. Note that $\widetilde{Sq}^0_*$ is not an $\mathcal A$-homomorphism. However, 
$\widetilde{Sq}^0_*Sq^{2t} = Sq^{t}\widetilde{Sq}^0_*,$ and $\widetilde{Sq}^0_*Sq^{2t+1} = 0$ for any non-negative integer $t$. 

Denote  by
$(\widetilde{Sq}^0_*)_{(k,m)}:(QP_k)_{2m+k}\longrightarrow (QP_k)_m$
Kameko's homomorphism in degree $2m+k$.

\begin{thm}\label{dlmd2} {\rm(Kameko~\cite{ka})}
Let $m$ be a positive integer. If $\mu(2m+k)=k$, then 
$$(\widetilde{Sq}^0_*)_{(k,m)}: (QP_k)_{2m+k}\longrightarrow (QP_k)_m$$
is an isomorphism of the $\mathbb F_2$-vector spaces. 
\end{thm}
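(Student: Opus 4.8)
The plan is to show the map is surjective unconditionally and injective under the hypothesis $\mu(2m+k)=k$. First I would record that $\widetilde{Sq}^0_*$ genuinely descends to $QP_k$: writing $\mathcal A^+P_k=\sum_{t\geqslant 1}Sq^t(P_k)$ and invoking the stated relations $\widetilde{Sq}^0_*Sq^{2t}=Sq^t\widetilde{Sq}^0_*$ and $\widetilde{Sq}^0_*Sq^{2t+1}=0$, one gets $\widetilde{Sq}^0_*(\mathcal A^+P_k)\subseteq\mathcal A^+P_k$. For surjectivity I would introduce the $\mathbb F_2$-linear section $\phi\colon P_k\to P_k$ defined on monomials by $\phi(y)=X_\emptyset y^2=x_1x_2\cdots x_ky^2$. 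Since squaring is additive over $\mathbb F_2$, one has $\phi(f)=X_\emptyset f^2$ for every $f$, and directly $\widetilde{Sq}^0_*(\phi(y))=y$, whence $\widetilde{Sq}^0_*\circ\phi=\mathrm{id}_{P_k}$. Passing to $QP_k$ gives $\widetilde{Sq}^0_*[\phi(f)]=[f]$ for each $[f]\in(QP_k)_m$, so $(\widetilde{Sq}^0_*)_{(k,m)}$ is onto. This step uses nothing about $\mu$.

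Next I would exploit the hypothesis to pin down the shape of representatives in degree $2m+k$. Let $z$ be the minimal spike of degree $2m+k$, which exists as $\mu(2m+k)=k\leqslant k$. Because $\mu(2m+k)=k$, we may write $2m+k=\sum_{i=1}^k(2^{d_i}-1)$ with every $d_i>0$, so every exponent $2^{d_i}-1$ of $z$ is odd and hence $\omega_1(z)=k$, the largest possible value of $\omega_1$ for a monomial in $k$ variables. Consequently any monomial $x$ of degree $2m+k$ with $\omega_1(x)<k$ satisfies $\omega(x)<\omega(z)$ and is therefore hit by Singer's criterion (Theorem \ref{dlsig}). Thus a monomial of degree $2m+k$ can be admissible only if all its exponents are odd, i.e.\ only if it has the form $X_\emptyset u^2$; collecting such monomials shows that every class of $(QP_5)_{2m+k}$, and more generally of $(QP_k)_{2m+k}$, is represented by $X_\emptyset g^2$ for some $g\in(P_k)_m$.

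For injectivity I would start from $\widetilde{Sq}^0_*[X_\emptyset g^2]=[g]=0$, so $g$ is hit, say $g=\sum_{t\geqslant 1}Sq^t(h_t)$, and then prove $X_\emptyset g^2=\sum_t X_\emptyset(Sq^th_t)^2$ is hit by treating each summand. The Cartan formula, with $Sq^b(w^2)=0$ for $b$ odd and $Sq^{2c}(w^2)=(Sq^cw)^2$, gives
$$X_\emptyset(Sq^th)^2 = Sq^{2t}(X_\emptyset h^2) + \sum_{e=1}^{t}Sq^{2e}(X_\emptyset)(Sq^{t-e}h)^2.$$
The first term is visibly hit. For the rest, a second Cartan computation yields $Sq^{2e}(X_\emptyset)=\sum_{|S|=2e}\prod_{j\in S}x_j^2\prod_{j\notin S}x_j$, so each of its monomials has exactly $k-2e$ odd exponents; multiplying by the all‑even factor $(Sq^{t-e}h)^2$ leaves every monomial of $Sq^{2e}(X_\emptyset)(Sq^{t-e}h)^2$ with $\omega_1=k-2e<k$. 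As these monomials have degree $2m+k$, the previous paragraph shows they are hit. Hence $X_\emptyset(Sq^th)^2$ is hit, $X_\emptyset g^2$ is hit, the class vanishes, and the map is injective.

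The hard part, and the sole place the hypothesis $\mu(2m+k)=k$ is used, is the final step: controlling the Cartan error terms $Sq^{2e}(X_\emptyset)(Sq^{t-e}h)^2$. The decisive observation is that $Sq^{2e}(X_\emptyset)$ squares exactly $2e$ of the $k$ variables and so drops $\omega_1$ strictly below $k$; Singer's criterion then annihilates these terms precisely because, under $\mu(2m+k)=k$, the minimal spike attains the extremal value $\omega_1(z)=k$. Once surjectivity and injectivity are in hand, $(\widetilde{Sq}^0_*)_{(k,m)}$ is an isomorphism of $\mathbb F_2$-vector spaces.
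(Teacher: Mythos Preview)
The paper does not supply a proof of this statement; it is cited from Kameko's thesis as a known background result. There is therefore no proof in the paper to compare against, but your argument is correct and self-contained given the tools recalled in Section~\ref{s2}.

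Surjectivity via the explicit section $\phi(y)=X_\emptyset y^{2}$ is standard and your verification is fine. For injectivity, the decisive step is the observation that $\mu(2m+k)=k$ forces the minimal spike $z$ to satisfy $\omega_{1}(z)=k$, so by Theorem~\ref{dlsig} every monomial of degree $2m+k$ with $\omega_{1}<k$ is hit; this both constrains representatives to the form $X_\emptyset g^{2}$ and disposes of the Cartan error terms $Sq^{2e}(X_\emptyset)(Sq^{t-e}h)^{2}$ in your rearranged identity. Your computation of $Sq^{2e}(X_\emptyset)$ and the resulting $\omega_{1}$-count is accurate. One typographical slip: you wrote $(QP_5)_{2m+k}$ where $(QP_k)_{2m+k}$ is intended.

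Historically, Kameko's thesis (1990) predates Singer's paper \cite{si2} (1991), so Kameko's original argument could not have invoked Theorem~\ref{dlsig}; your proof is thus a different, and arguably cleaner, route that leverages a tool the present paper already has on hand.
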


By combining Theorems \ref{dlmd1} and \ref{dlmd2}, one can see that the hit problem is reduced to the case of degree $n$ of the form (\ref{ct1.1}) as given in the introduction.

We set 
\begin{align*} 
P_k^0 &=\langle\{x=x_1^{a_1}x_2^{a_2}\ldots x_k^{a_k} \ : \ a_1a_2\ldots a_k=0\}\rangle,
\\ P_k^+ &= \langle\{x=x_1^{a_1}x_2^{a_2}\ldots x_k^{a_k} \ : \ a_1a_2\ldots a_k>0\}\rangle. 
\end{align*}

It is easy to see that $P_k^0$ and $P_k^+$ are the $\mathcal{A}$-submodules of $P_k$. Furthermore, we have the following.

\begin{prop}\label{2.7} We have a direct summand decomposition of the $\mathbb F_2$-vector spaces
$$QP_k =QP_k^0 \oplus  QP_k^+.$$
Here $QP_k^0 = \mathbb F_2\otimes_{\mathcal A}P_k^0$ and  $QP_k^+ = \mathbb F_2\otimes_{\mathcal A}P_k^+$.
\end{prop}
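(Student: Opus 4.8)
The statement to prove is Proposition~\ref{2.7}, asserting the direct sum decomposition $QP_k = QP_k^0 \oplus QP_k^+$.

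\medskip

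The plan is to exhibit the decomposition at the level of graded $\mathbb{F}_2$-vector spaces of polynomials and then pass to the quotient by $\mathcal{A}^+P_k$. First I would observe that, as graded $\mathbb{F}_2$-vector spaces, $P_k = P_k^0 \oplus P_k^+$ simply because every monomial $x = x_1^{a_1}\cdots x_k^{a_k}$ lies in exactly one of the two spanning sets: either $a_1a_2\cdots a_k = 0$ (some exponent vanishes) or $a_1a_2\cdots a_k > 0$ (all exponents positive). Since $P_k^0$ and $P_k^+$ are $\mathcal{A}$-submodules of $P_k$ (as already noted in the excerpt, and which I would briefly justify using the Cartan formula: applying $Sq^i$ to a monomial with some $x_j$-exponent zero produces a sum of monomials each still having some exponent zero — more precisely, $Sq^i(x)$ for $x \in P_k^0$ is a sum of monomials divisible by the same subset of variables, hence lies in $P_k^0$; and $P_k^+$ is spanned by $x_1\cdots x_k \cdot P_k$-type monomials, stable under $\mathcal{A}$ because $Sq^i(x_1\cdots x_k \cdot y)$ expands via Cartan into terms each divisible by $x_1\cdots x_k$ when one tracks the squares carefully — here I would instead simply cite that $P_k^0$ is an ideal-type submodule and $P_k^+$ is its complement), the functor $\mathbb{F}_2 \otimes_{\mathcal{A}} (-)$, being additive, sends the direct sum of $\mathcal{A}$-modules to the direct sum of the quotients.

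\medskip

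Concretely, the key steps in order are: (1) verify $P_k = P_k^0 \oplus P_k^+$ as $\mathbb{F}_2$-vector spaces by the monomial partition; (2) confirm both summands are $\mathcal{A}$-submodules (stated in the excerpt); (3) note that $\mathcal{A}^+P_k = \mathcal{A}^+P_k^0 \oplus \mathcal{A}^+P_k^+$, because $\mathcal{A}^+P_k^0 \subseteq P_k^0$ and $\mathcal{A}^+P_k^+ \subseteq P_k^+$ so the two pieces meet only in $0$, and their sum is all of $\mathcal{A}^+P_k$ since $\mathcal{A}^+(P_k^0 \oplus P_k^+) = \mathcal{A}^+P_k^0 + \mathcal{A}^+P_k^+$; (4) conclude
$$QP_k = P_k/\mathcal{A}^+P_k = (P_k^0 \oplus P_k^+)/(\mathcal{A}^+P_k^0 \oplus \mathcal{A}^+P_k^+) \cong P_k^0/\mathcal{A}^+P_k^0 \,\oplus\, P_k^+/\mathcal{A}^+P_k^+ = QP_k^0 \oplus QP_k^+.$$

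\medskip

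The only point requiring genuine care — and thus the main obstacle, though a mild one — is step (3): one must check that $\mathcal{A}^+P_k$ does not mix the two summands, i.e., that a hit polynomial which happens to lie in $P_k^0$ is already a sum of $\mathcal{A}^+$-images of polynomials in $P_k^0$. This follows from the fact that $P_k^0$ is an $\mathcal{A}$-submodule: if $g \in P_k^0$ and $g = \sum_u Sq^{2^u}(g_u)$, one can decompose each $g_u = g_u^0 + g_u^+$ and then $\sum_u Sq^{2^u}(g_u^+) \in P_k^+$ while $g - \sum_u Sq^{2^u}(g_u^0) \in P_k^0$; since the left side of the resulting identity lies in $P_k^0$ and the right in $P_k^+$, both vanish, so $g = \sum_u Sq^{2^u}(g_u^0) \in \mathcal{A}^+P_k^0$. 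The symmetric argument handles $P_k^+$. Everything else is formal additivity of the tensor functor, so the proof is short.
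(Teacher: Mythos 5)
Your proof is correct and is exactly the standard argument: the paper states Proposition~\ref{2.7} without proof (treating it as an immediate consequence of $P_k^0$ and $P_k^+$ being $\mathcal A$-submodules with $P_k = P_k^0 \oplus P_k^+$ as vector spaces), and your steps (1)--(4), including the careful check that $\mathcal A^+P_k$ does not mix the two summands, supply precisely the reasoning being left implicit. Nothing is missing; the hedged justification that $P_k^+$ is $\mathcal A$-stable can be made cleanest by noting that $Sq^{i_j}(x_j^{a_j})$ is a multiple of $x_j^{a_j+i_j}$, so exponents never decrease under the Cartan expansion.
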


\begin{nota} From now on, we denote by $B_{k}(n)$ the set of all admissible monomials of degree $n$  in $P_k$, 
$$B_{k}^0(n) = B_{k}(n)\cap P_k^0,\ B_{k}^+(n) = B_{k}(n)\cap P_k^+.$$ 
For a weight vector $\omega$ of degree $n$, we set $$B_k(\omega) = B_{k}(n)\cap P_k(\omega),\ B_k^+(\omega) = B_{k}^+(n)\cap P_k(\omega).$$
For a subset $S \subset P_k$, we denote $[S] = \{[f] : f \in S\}$. If $S \subset P_k(\omega)$, then we set 
$$[S]_\omega = \{[f]_\omega : f \in S\}.$$ 
Then, $[B_k(\omega)]_\omega$ and $[B_k^+(\omega)]_\omega$, are respectively the basses of the $\mathbb F_2$-vector spaces $QP_k(\omega)$ and $QP_k^+(\omega) := QP_k(\omega)\cap QP_k^+$.
\end{nota}

Now, we recall some notations and definitions in \cite{su5}.

Denote
$$\mathcal N_k =\big\{(i;I) ; I=(i_1,i_2,\ldots,i_r),1 \leqslant  i < i_1 <  \ldots < i_r\leqslant  k,\ 0\leqslant r <k\big\}.$$

\begin{defn} Let $(i;I) \in \mathcal N_k$, let $r = \ell(I)$ be the length of $I$, and let $u$
be an integer with $1 \leqslant  u \leqslant r$. A monomial $x$ in $P_{k-1}$ is said to be $u$-compatible with $(i;I)$ if all of the following hold:

\smallskip
i) $\nu_{i_1-1}(x)= \nu_{i_2-1}(x)= \ldots = \nu_{i_{(u-1)}-1}(x)=2^{r} - 1$,

ii) $\nu_{i_u-1}(x) > 2^{r} - 1$,

iii) $\alpha_{r-t}(\nu_{i_u-1}(x)) = 1,\ \forall t,\ 1 \leqslant t \leqslant  u$,

iv) $\alpha_{r-t}(\nu_{i_t-1} (x)) = 1,\ \forall t,\ u < t \leqslant r$.
\end{defn}

Clearly, a monomial $x$ can be $u$-compatible with a given $(i;I) \in \mathcal N_k $ for at most one value of $u$. By convention, $x$ is $1$-compatible with $(i;\emptyset)$.

For $1 \leqslant i \leqslant k$, define the homomorphism $f_i: P_{k-1} \to P_k$ of algebras by substituting
$$f_i(x_j) = \begin{cases} x_j, &\text{ if } 1 \leqslant j <i,\\
x_{j+1}, &\text{ if } i \leqslant j <k.
\end{cases}$$

\begin{defn}\label{dfn1} Let $(i;I) \in \mathcal N_k$, $x_{(I,u)} = x_{i_u}^{2^{r-1}+\ldots + 2^{r-u}}\prod_{u< t \leqslant r}x_{i_t}^{2^{r-t}}$ for $r = \ell(I)>0$, $x_{(\emptyset,1)} = 1$.  For a monomial $x$ in $P_{k-1}$, 
we define the monomial  $\phi_{(i;I)}(x)$ in $P_k$ by setting
$$ \phi_{(i;I)}(x) = \begin{cases} (x_i^{2^r-1}f_i(x))/x_{(I,u)}, &\text{if there exists $u$ such that}\\ &\text{$x$ is $u$-compatible with $(i, I)$,}\\
0, &\text{otherwise.}
\end{cases}$$

Then, we have an $\mathbb F_2$-linear map $\phi_{(i;I)}:P_{k-1}\to P_k$.
In particular, $\phi_{(i;\emptyset)} = f_i$.
\end{defn}
For a subset $B \subset P_{k-1}$, denote
\begin{align*}\Phi^0(B) &= \bigcup_{1\leqslant i \leqslant k}\phi_{(i;\emptyset)}(B) = \bigcup_{1\leqslant i \leqslant k}f_i(B).\\
\Phi^+(B) &= \bigcup_{(i;I)\in \mathcal N_k, 0<\ell(I)\leqslant k-1}\phi_{(i;I)}(B)\setminus P_k^0.\\
\Phi(B) &= \Phi^0(B)\bigcup \Phi^+(B). 
\end{align*} 

Clearly, we have
\begin{prop}\label{mdcnd}
If $B$ is a minimal set of generators for  $\mathcal A$-module $P_{k-1}$ in degree $n$, then $\Phi^0(B)$ is also a minimal set of generators for  $\mathcal A$-module $P_k^0$ in degree $n$.
\end{prop}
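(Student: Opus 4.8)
The plan is to exploit the two natural $\mathcal A$-linear maps relating $P_{k-1}$ and $P_k$. Each substitution homomorphism $f_i\colon P_{k-1}\to P_k$ sends variables to variables and hence commutes with the Steenrod squares, so it is a monomorphism of $\mathcal A$-modules; since $f_i$ merely relabels variables, it fixes the weight vector and carries the exponent vector $\sigma(x)$ to the vector obtained by inserting a $0$ in the $i$-th slot, so it preserves the order $<$ of Definition \ref{defn3} on monomials of a given degree, and in particular $f_i(\mathcal A^+P_{k-1})\subset\mathcal A^+P_k$. Dually, $x_iP_k$ is an $\mathcal A$-submodule of $P_k$ because $Sq^t(x_ig)=x_i\big(Sq^t(g)+x_iSq^{t-1}(g)\big)$, so killing $x_i$ defines an $\mathcal A$-epimorphism $\rho_i\colon P_k\twoheadrightarrow P_k/x_iP_k\cong P_{k-1}$ with $\rho_i f_i=\mathrm{id}_{P_{k-1}}$, and $\rho_i$ is strictly order-preserving on the set of monomials $z$ with $\nu_i(z)=0$. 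The key structural point is that the monomial basis of $P_k^0$ consists precisely of the monomials lying in $\bigcup_{1\leqslant i\leqslant k}\mathrm{im}(f_i)$.

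First I would check that $\Phi^0(B)$ generates $P_k^0$ in degree $n$. Given a monomial $x\in(P_k^0)_n$, pick $i$ with $\nu_i(x)=0$ and write $x=f_i(x')$ with $x'\in(P_{k-1})_n$. Since $B$ generates $P_{k-1}$ in degree $n$, we may write $x'=\sum_{b\in B}\lambda_b b+\theta$ with $\lambda_b\in\mathbb F_2$ and $\theta\in\mathcal A^+P_{k-1}$; applying $f_i$ yields $x=\sum_b\lambda_b f_i(b)+f_i(\theta)$ with $f_i(\theta)\in\mathcal A^+P_k$, so $[x]=\sum_b\lambda_b[f_i(b)]$ in $QP_k$. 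Hence $[\Phi^0(B)]$ spans $(QP_k^0)_n$.

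For minimality the key case is $B=B_{k-1}(n)$, the set of admissible monomials of degree $n$ in $P_{k-1}$; I would show $\Phi^0(B_{k-1}(n))=B_k^0(n)$, and since $[B_k^0(n)]$ is a basis of $(QP_k^0)_n$ by Proposition \ref{2.7}, this exhibits it as a minimal $\mathcal A$-generating set for $P_k^0$ in degree $n$. The equality rests on the equivalence: for a monomial $x'\in P_{k-1}$ and $i$ fixed, $x=f_i(x')$ is admissible in $P_k$ if and only if $x'$ is admissible in $P_{k-1}$. If $x'$ is inadmissible, a witnessing relation $x'-\sum_j y_j\in\mathcal A^+P_{k-1}$ with $y_j<x'$ maps under $f_i$ to a witnessing relation for $x$, because $f_i$ is $\mathcal A$-linear and order-preserving. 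Conversely, if $x$ is inadmissible, take a witnessing relation $x-\sum_j z_j\in\mathcal A^+P_k$ with $z_j<x$ and apply $\rho_i$: the $z_j$ with $\nu_i(z_j)>0$ are killed, while each surviving $z_j$ has $\nu_i(z_j)=0$ and thus $\rho_i(z_j)<\rho_i(x)=x'$, so $x'-\sum\rho_i(z_j)\in\mathcal A^+P_{k-1}$ shows $x'$ inadmissible. Granting the equivalence, $\Phi^0(B_{k-1}(n))\subseteq B_k^0(n)$ because each $f_i(b)$ with $b$ admissible lies in $P_k^0$ and is admissible, and conversely any admissible $x\in P_k^0$ can be written $x=f_i(x')$ with $\nu_i(x)=0$, whence $x'$ is admissible and $x\in\Phi^0(B_{k-1}(n))$.

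The main obstacle is exactly the converse half of that equivalence: the monomials $z_j$ witnessing inadmissibility of $f_i(x')$ in $P_k$ need not lie in $\mathrm{im}(f_i)$, so one cannot simply pull the relation back along $f_i$. What makes it go through is the retraction $\rho_i$ together with the fact that $x_iP_k$ is an $\mathcal A$-submodule, which lets one apply $\rho_i$ to the relation without leaving the category of $\mathcal A$-modules and while respecting $<$. For an arbitrary minimal set $B$ one runs the same scheme; the extra point is that $|\Phi^0(B)|=\dim(QP_k^0)_n$, which follows by analysing the overlaps $f_i(b)=f_j(b')$ through the standard identities relating the $f$'s and the $\rho$'s, together with the fact that $\rho_i$ carries admissibles with $\nu_i=0$ to admissibles.
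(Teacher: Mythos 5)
The paper offers no proof of this proposition at all --- it is prefaced by ``Clearly, we have'' --- so there is nothing to compare your argument against step by step. Your proof of the case $B=B_{k-1}(n)$ is correct and complete, and that is the only case the paper ever invokes (it always applies the proposition to sets of admissible monomials). The two ingredients you isolate are exactly the right ones: $f_i$ is an $\mathcal A$-linear order-embedding of monomials, and the retraction $\rho_i$ modulo the $\mathcal A$-submodule $x_iP_k$ lets you push a witnessing relation for the inadmissibility of $f_i(x')$ back down to one for $x'$, since the terms outside the image of $f_i$ are killed and the surviving ones still satisfy the order condition. Combined with $B_k(n)=B_k^0(n)\sqcup B_k^+(n)$ and Proposition \ref{2.7}, this gives $\Phi^0(B_{k-1}(n))=B_k^0(n)$ and hence the assertion for the admissible basis.

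The genuine gap is your final paragraph: the extension to an arbitrary minimal $B$ does not ``run on the same scheme'', and in fact the proposition as literally stated fails for general $B$. Take $k=4$ and $n=3$. Since $Sq^1(x_1x_2)=x_1^2x_2+x_1x_2^2$, replacing the admissible monomial $x_1x_2^2$ by $x_1^2x_2$ in $B_3(3)$ (while keeping $x_1x_3^2$) still yields a minimal set of monomial $\mathcal A$-generators $B$ of $(P_3)_3$. But then $\Phi^0(B)$ contains both $f_2(x_1^2x_2)=x_1^2x_3$ and $f_4(x_1x_3^2)=x_1x_3^2$, two distinct monomials with $[x_1^2x_3]=[x_1x_3^2]$ in $QP_4$, so $\Phi^0(B)$ is a generating set of $(P_4^0)_3$ that is not minimal. (For non-monomial $B$ it fails even more simply: $B=\{x_1,\,x_1+x_2\}$ in $(P_2)_1$ gives $|\Phi^0(B)|=5>3=\dim(QP_3^0)_1$.) The point is that the monomials of a given support occurring in $\Phi^0(B)$ come from several different pieces of $B$ via different relabelings, and nothing forces those pieces to be chosen coherently unless $B$ is the canonical set of admissible monomials (whose defining order is compatible with the face maps $f_i$). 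So the cardinality claim $|\Phi^0(B)|=\dim(QP_k^0)_n$ cannot be established ``by analysing the overlaps'' --- it is not true in general --- and the proposition should be read, as the paper implicitly does, with $B$ the set of admissible monomials; with that reading your proof is fine and supplies the argument the paper omits.
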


\begin{thm}\label{mdc1} {\rm(See \cite{su5})} Let $n =\sum_{1 \leqslant i \leqslant k-1}(2^{d_i}-1)$ with $d_i$ positive integers such that $d_1 > d_2 > \ldots >d_{k-2} \geqslant d_{k-1} \geqslant k-1 \geqslant 3$. If $B$ is a minimal set of generators for  $\mathcal A$-module $P_{k-1}$ in degree $n$, then $\Phi(B)$ is also a minimal set of generators for  $\mathcal A$-module $P_k$ in degree $n$. 
\end{thm}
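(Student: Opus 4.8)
The strategy is to prove separately that $\Phi(B)$ generates $P_k$ in degree $n$ and that the classes $\{[z] : z\in\Phi(B)\}$ are linearly independent in $(QP_k)_n$, organising both along the decomposition $QP_k = QP_k^0\oplus QP_k^+$ of Proposition \ref{2.7}. By construction $\Phi(B) = \Phi^0(B)\cup\Phi^+(B)$ with $\Phi^0(B)\subset P_k^0$ and $\Phi^+(B)\subset P_k^+$, and by Proposition \ref{mdcnd} the set $\Phi^0(B)$ is already a minimal set of $\mathcal A$-generators for $P_k^0$ in degree $n$, so $[\Phi^0(B)]$ is a basis of $(QP_k^0)_n$. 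Hence everything reduces to showing that $[\Phi^+(B)]$ is a basis of $(QP_k^+)_n$.

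The first step is to pin down the relevant weight vector. From $n=\sum_{1\leqslant i\leqslant k-1}(2^{d_i}-1)$ we get $\mu(n)=k-1<k$, so there is a unique minimal spike $z_n=\prod_{i=1}^{k-1}x_i^{2^{d_i}-1}\in P_{k-1}^0\subset P_k$ of degree $n$, and
\[
\omega_0:=\omega(z_n)=\big((k-1)^{(d_{k-1})},(k-2)^{(d_{k-2}-d_{k-1})},\ldots,1^{(d_1-d_2)}\big).
\]
Since $n$ is a sum of $k-1$ odd numbers, $n\equiv k-1\not\equiv k\pmod 2$, so no degree-$n$ monomial in $P_k$ can have first weight-coordinate equal to $k$; feeding this back into the recursion that strips off the leading dyadic digit, and using $d_{k-1}\geqslant k-1$, one shows that $\omega_0$ is the largest weight vector carried by a monomial of degree $n$ in $P_k$. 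By Singer's criterion (Theorem \ref{dlsig}) every admissible degree-$n$ monomial in $P_k$ — and likewise in $P_{k-1}$ — therefore has weight vector $\omega_0$; in particular $(QP_k^+)_n = QP_k^+(\omega_0)$ and we may take $B = B_{k-1}(\omega_0)$.

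For spanning, let $z\in B_k^+(\omega_0)$. Because $\omega(z)=\omega_0$ begins with $d_{k-1}\ (\geqslant k-1\geqslant \ell(I))$ coordinates equal to $k-1$, in each of the first several dyadic digit positions of $z$ exactly one variable index is absent; reading these off produces a well-defined $(i;I)\in\mathcal N_k$ with $0<\ell(I)\leqslant k-1$, and the monomial $x\in P_{k-1}$ obtained from $z$ by inverting the recipe of Definition \ref{dfn1} is $u$-compatible with $(i;I)$ for the forced value of $u$ and satisfies $z=\phi_{(i;I)}(x)$; moreover $x$ is admissible, since otherwise $\phi_{(i;I)}$ would carry an inadmissibility relation for $x$ to one for $z$ (using the compatibility of $\phi_{(i;I)}$ with $\equiv_{\omega_0}$ and with the order of Definition \ref{defn3}). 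Hence $B_k^+(\omega_0)\subset\Phi^+(B)$, so $[\Phi^+(B)]$ spans $QP_k^+(\omega_0)$. For independence, the cleanest route — and the one pursued in \cite{su5} — is to prove the reverse inclusion at the monomial level: for $x\in B=B_{k-1}(\omega_0)$ that is $u$-compatible with $(i;I)\in\mathcal N_k$, $0<\ell(I)\leqslant k-1$, the monomial $\phi_{(i;I)}(x)=x_i^{2^{\ell(I)}-1}f_i(x)/x_{(I,u)}$ is again admissible of weight $\omega_0$ — the weight vector by a direct computation, and admissibility from Theorem \ref{dlcb1} applied with $x_i$ as the distinguished variable, since any straightening of $\phi_{(i;I)}(x)$ descends through $\phi_{(i;I)}$ to a straightening of $x$. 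Thus $\Phi^+(B)\subset B_k^+(\omega_0)$, so $[\Phi^+(B)]$ lies inside the basis $[B_k^+(\omega_0)]$ of $(QP_k^+)_n$, whence, together with the spanning step, it is itself a basis. Combined with the $QP_k^0$-part, this shows that $\Phi(B)$ is a minimal set of $\mathcal A$-generators for $P_k$ in degree $n$.

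The only laborious part is the pair of combinatorial claims used in the last paragraph — that $\phi_{(i;I)}$ carries admissible monomials of weight $\omega_0$ in $P_{k-1}$ to admissible monomials in $P_k$, and that conversely every admissible monomial of weight $\omega_0$ in $P_k^+$ is the $\phi_{(i;I)}$-image of exactly one admissible monomial of $P_{k-1}$ with $\ell(I)>0$. Both come down to comparing exponent vectors against the block structure of $\omega_0$, and it is precisely here that the hypotheses $d_1>\ldots>d_{k-2}\geqslant d_{k-1}\geqslant k-1\geqslant 3$ are used: the condition $d_{k-1}\geqslant k-1$ produces the leading block of $k-1$ copies of $k-1$ in $\omega_0$, which is exactly the room needed for the substitutions defining $\phi_{(i;I)}$ to be reversible on admissible monomials. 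This is the content carried out in detail in \cite{su5}.
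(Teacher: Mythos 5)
The paper itself offers no proof of Theorem \ref{mdc1}: it is quoted from \cite{su5}. Judged on its own terms, your sketch contains a fatal error at the step where you reduce everything to a single weight vector. You claim that $\omega_0=\omega(z_n)$, the weight vector of the minimal spike, is the largest weight vector carried by a degree-$n$ monomial, and conclude via Singer's criterion that every admissible degree-$n$ monomial in $P_k$ (and in $P_{k-1}$) has weight vector $\omega_0$, so that $(QP_k^+)_n=QP_k^+(\omega_0)$ and $B=B_{k-1}(\omega_0)$. This is false, and the paper itself supplies the counterexample: for $k=5$, $n=4(2^d-1)$ (so $d_1=d+1>d_2=d>d_3=d_4=d-1$, satisfying the hypotheses once $d\geqslant 5$), the minimal spike has weight $\bar\omega_{(5,d)}=(4^{(d-1)},2,1)$, yet the admissible monomial $v_{d,21}=x_1^{2^d-1}x_2^{2^d-1}x_3^{2^d-1}x_4^{2^d-1}$ has weight $\omega_{(5,d)}=(4^{(d)})>\bar\omega_{(5,d)}$, and by Proposition \ref{mdcm1} the summand $QP_5(\omega_{(5,d)})$ is nonzero (dimension $31$ for $d\geqslant 5$). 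Singer's criterion (Theorem \ref{dlsig}) only bounds the weight of a non-hit monomial from \emph{below} by $\omega(z_n)$; it gives no upper bound. Your parity recursion actually produces the greedy maximum $((k-1)^{(d_{k-1}+1)},\ldots)$-type vector, which differs from $\omega(z_n)$ precisely because the hypotheses allow $d_{k-2}=d_{k-1}$, so the spike's weight drops to $k-2$ one step earlier than the maximum does.

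Consequently the architecture of the argument collapses: $B$ necessarily contains admissible monomials of weight different from $\omega_0$ (e.g.\ $v_{d,21}$), $\Phi^+(B)$ contains their images, and the claimed inclusion $\Phi^+(B)\subset B_k^+(\omega_0)$ cannot hold even as a statement about weight vectors. A correct proof must handle all weight vectors $\geqslant\omega(z_n)$ of degree $n$ separately --- in the present situation at least the two components $QP_k(\omega_{(k,d)})$ and $QP_k(\bar\omega_{(k,d)})$, which is exactly the decomposition the paper sets up in Section 3 via Lemma \ref{bdday} and Proposition \ref{mdcm1}. Moreover, the two combinatorial claims you defer to the end (that $\phi_{(i;I)}$ preserves admissibility and that every admissible monomial of $P_k^+$ arises from exactly one admissible monomial of $P_{k-1}$) constitute the substantial technical content of \cite{su5}; asserting them in a sentence does not discharge them, and the second one in particular requires a careful inductive analysis of inadmissibility relations, not just a comparison of exponent vectors.
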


\begin{defn}
For any $(i;I) \in \mathcal N_k$, we define the homomorphism $p_{(i;I)}: P_k \to P_{k-1}$ of algebras by substituting
$$p_{(i;I)}(x_j) =\begin{cases} x_j, &\text{ if } 1 \leqslant j < i,\\
\sum_{s\in I}x_{s-1}, &\text{ if }  j = i,\\  
x_{j-1},&\text{ if } i< j \leqslant k.
\end{cases}$$
Then, $p_{(i;I)}$ is a homomorphism of $\mathcal A$-modules.  In particular, for $I =\emptyset$,  $p_{(i;\emptyset)}(x_i)= 0$  and $p_{(i;I)}(f_i(y)) = y$ for any $y \in P_{k-1}$. 
\end{defn}

\begin{lem}\label{bdm1} {\rm(See \cite{sp})} 
If $x$ is a monomial in $P_k$, then $p_{(i;I)}(x) \in P_{k-1}(\omega(x))$. 
\end{lem}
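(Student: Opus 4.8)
\medskip\noindent
\textbf{Proof proposal.} Since $p_{(i;I)}$ is a homomorphism of graded algebras and each generator $x_j$ is sent to a homogeneous element of degree one (a single variable $x_{j'}$, or the sum $\sum_{s\in I}x_{s-1}$, possibly zero), the map $p_{(i;I)}$ preserves degree; as $\deg\omega(x)=\deg x$, every monomial occurring in $p_{(i;I)}(x)$ automatically has the correct degree, so the whole content of the lemma is the inequality $\omega(y)\leqslant\omega(x)$ in the left lexicographic order for each such monomial $y$. The plan is to prove this by induction on $\deg x$, exploiting the recursive structure of the weight vector.

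First I would record the elementary decomposition coming from $x=\prod_{t\geqslant 0}X_{\mathbb J_t(x)}^{2^t}$: writing $x=X_{\mathbb J_0(x)}\,\bar x^{\,2}$, where $X_{\mathbb J_0(x)}$ is the square-free part of $x$ (of degree $\omega_1(x)$) and $\bar x=\prod_{t\geqslant 1}X_{\mathbb J_t(x)}^{2^{t-1}}$, one checks directly from the definitions that $\mathbb J_{t-1}(\bar x)=\mathbb J_t(x)$, hence $\omega(\bar x)=(\omega_2(x),\omega_3(x),\ldots)$ and $\deg\bar x<\deg x$. Because $p_{(i;I)}$ is $\mathbb F_2$-linear and multiplicative and squaring is additive over $\mathbb F_2$, we get $p_{(i;I)}(x)=p_{(i;I)}(X_{\mathbb J_0(x)})\cdot\bigl(p_{(i;I)}(\bar x)\bigr)^2$, so every monomial $y$ of $p_{(i;I)}(x)$ factors as $y=w\,v^2$ with $w$ a monomial of $p_{(i;I)}(X_{\mathbb J_0(x)})$ (so $\deg w=\omega_1(x)$) and $v$ a monomial of $p_{(i;I)}(\bar x)$.

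The inductive step then splits into two cases. If $w$ is not square-free, write $w=w_1\bar w^{\,2}$ with $w_1$ square-free of degree strictly less than $\omega_1(x)$; then $y=w_1(\bar w v)^2$ exhibits the square-free part of $y$ as $w_1$, so $\omega_1(y)<\omega_1(x)$ and a fortiori $\omega(y)<\omega(x)$. If $w$ is square-free, then since $v^2$ is a perfect square the square-free part of $y=wv^2$ is exactly $w$ and $y=wv^2$ is already the canonical factorization of $y$; hence $\omega_1(y)=\deg w=\omega_1(x)$, while applying the decomposition of the previous paragraph to $y$ gives $(\omega_2(y),\omega_3(y),\ldots)=\omega(v)$. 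Applying the induction hypothesis to $\bar x$ (legitimate since $\deg\bar x<\deg x$) yields $\omega(v)\leqslant\omega(\bar x)=(\omega_2(x),\omega_3(x),\ldots)$, and combined with $\omega_1(y)=\omega_1(x)$ this gives $\omega(y)\leqslant\omega(x)$. The base case $\deg x=0$ is trivial.

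The one step requiring genuine attention — and the sole place where the explicit form of $p_{(i;I)}$ enters — is the square-freeness analysis of $w$: on the variables $x_j$ with $j\neq i$ the substitution is merely an injective reindexing, so $p_{(i;I)}$ carries a square-free monomial not divisible by $x_i$ to a square-free monomial, while $p_{(i;I)}(x_i)=\sum_{s\in I}x_{s-1}$ is a sum of \emph{distinct} variables; consequently $w$ fails to be square-free precisely when the summand $x_{s-1}$ chosen from $p_{(i;I)}(x_i)$ coincides with the image of another variable dividing $X_{\mathbb J_0(x)}$, which is exactly the collision that lowers $\omega_1$. I expect this bookkeeping to be the main (though routine) obstacle; everything else is the formal recursion on the weight vector described above.
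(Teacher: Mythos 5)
Your argument is correct and complete: the decomposition $x=X_{\mathbb J_0(x)}\bar x^{\,2}$ with $\omega(\bar x)=(\omega_2(x),\omega_3(x),\ldots)$, the multiplicativity of $p_{(i;I)}$, and the two-case analysis of the square-free part of $wv^2$ give exactly the inequality $\omega(y)\leqslant\omega(x)$ needed for membership in $P_{k-1}(\omega(x))$. The paper itself supplies no proof of this lemma (it defers to the cited note \cite{sp}), and your induction on degree is precisely the natural argument used there, so there is nothing to correct or compare beyond noting that your proof in fact works for any algebra homomorphism sending each variable to a sum of variables --- the specific form of $p_{(i;I)}$ plays no essential role.
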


Lemma \ref{bdm1} implies that if $\omega$ is a weight vector and $x \in P_k(\omega)$, then $p_{(i;I)}(x) \in P_{k-1}(\omega)$. Moreover, $p_{(i;I)}$ passes to a homomorphism from $QP_k(\omega)$ to $QP_{k-1}(\omega)$. 

For a positive integer $b$, denote 
$$\omega_{(k,b)} =((k-1)^{(b)}) , \  \bar \omega_{(k,b)}= ((k-1)^{(b-1)},k-3,1).$$

\begin{prop}\label{mdcm1} {\rm(See \cite{sp})} 
Let $d$ be a positive integer and let $p=\min\{k,d\}$.  Then, the set 
$$B(d) :=\big\{\big[\phi_{(i;I)}(X^{2^d-1})\big]_{\omega_{(k,d)}} : (i;I) \in \mathcal N_{k,p}\big\}$$ 
is a basis of the $\mathbb F_2$-vector space $QP_k(\omega_{(k,d)})$. Consequently 
$$\dim QP_k(\omega_{(k,d)}) = \sum_{t=1}^p\binom kt.$$
\end{prop}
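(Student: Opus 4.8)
\medskip
\noindent\emph{Outline of proof.} Write $n=(k-1)(2^d-1)=\deg\omega_{(k,d)}$, $\omega=\omega_{(k,d)}=((k-1)^{(d)})$ and $p=\min\{k,d\}$.

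\textbf{Step 1 (the monomials of weight $\omega$ and the set $B(d)$).} If $y\in P_k$ is a monomial with $\omega(y)=\omega$, then $\deg X_{\mathbb J_t(y)}=\omega_{t+1}(y)=k-1$ for $0\le t\le d-1$, so $\mathbb J_t(y)=\{j_t\}$ is a singleton for $t<d$ and $\mathbb J_t(y)=\mathbb N_k$ for $t\ge d$; hence $y=\prod_{t=0}^{d-1}X_{j_t}^{2^t}$, and the monomials of weight exactly $\omega$ are parametrised by $(j_0,\dots,j_{d-1})\in\mathbb N_k^d$, with $QP_k(\omega)\cong QP_k^\omega$ spanned by the $\big[\prod_tX_{j_t}^{2^t}\big]_\omega$. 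One checks that $\omega$ is the lexicographically largest weight vector of degree $n$ (take $\omega_1=k-1$ and recurse), so every monomial of degree $n$ has weight $\le\omega$; in particular $Sq^{2^s}$ carries any monomial of degree $n-2^s$ into $P_k(\omega)$. From the definition of $u$-compatibility, the spike $X^{2^d-1}=(x_1\cdots x_{k-1})^{2^d-1}\in P_{k-1}$ is $u$-compatible with $(i;I)\in\mathcal N_k$, $\ell(I)=r$, for some $u$ exactly when $r\le d-1$, in which case $u=1$ (the equal exponents $\nu_j=2^d-1$ force $u=1$, and the remaining conditions read $2^d-1>2^r-1$ and $\alpha_{r-t}(2^d-1)=1$ for $1\le t\le r$, valid iff $r\le d-1$). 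A short exponent computation then gives $\phi_{(i;I)}(X^{2^d-1})=\prod_tX_{j_t}^{2^t}$ with $(j_0,\dots,j_{d-1})=(i_r,i_{r-1},\dots,i_1,i,\dots,i)$; thus $\omega(\phi_{(i;I)}(X^{2^d-1}))=\omega$, $\phi_{(i;\emptyset)}(X^{2^d-1})=X_i^{2^d-1}\in P_k^0$ is a spike, $\phi_{(i;I)}(X^{2^d-1})\in P_k^+$ for $\ell(I)\ge1$, and $B(d)$ consists of the classes of $|\mathcal N_{k,p}|=\sum_{t=1}^p\binom kt$ distinct monomials, corresponding via $(i;I)\mapsto\{i\}\cup I=\{j_0,\dots,j_{d-1}\}$ to the nonempty subsets $T\subseteq\mathbb N_k$ with $|T|\le p$.

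\textbf{Step 2 (reduction to two claims).} By Proposition~\ref{2.7} it suffices to prove separately that $B(d)$ spans $QP_k(\omega)$ and is linearly independent. For $QP_k^0(\omega)$ this is immediate: a weight-$\omega$ monomial of $P_k^0$ has a variable of exponent $0$, so it is one of the spikes $X_1^{2^d-1},\dots,X_k^{2^d-1}$; these are admissible, hence linearly independent in $QP_k$, they span $QP_k^0(\omega)$, and they are the $\ell(I)=0$ part of $B(d)$. It remains to prove: \textbf{(A)} for every $(j_0,\dots,j_{d-1})\in\mathbb N_k^d$, $\big[\prod_tX_{j_t}^{2^t}\big]_\omega$ depends only on the set $\{j_0,\dots,j_{d-1}\}$ — equivalently, $B(d)$ spans $QP_k(\omega)$; and \textbf{(B)} the classes in $B(d)$ are linearly independent.

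\textbf{Step 3 (independence).} For \textbf{(B)} I would use the $\mathcal A$-homomorphisms $p_{(j;J)}:P_k\to P_{k-1}$, which by Lemma~\ref{bdm1} induce maps $p_{(j;J)}:QP_k(\omega)\to QP_{k-1}(\omega)$; since $X^{2^d-1}$ is the unique weight-$\omega$ monomial of $P_{k-1}$ and is admissible, $QP_{k-1}(\omega)$ is one-dimensional, spanned by $[X^{2^d-1}]$. The key point — proved by unwinding the substitutions defining $p_{(j;J)}$ and $\phi_{(i;I)}$, exactly as in the matrix computations behind Theorem~\ref{mdc1} in \cite{su5} — is that, for a suitable total order on $\mathcal N_{k,p}$ (e.g. by $\ell(I)$, then lexicographically on $\{i\}\cup I$), one has $p_{(i;I)}(\phi_{(i;I)}(X^{2^d-1}))\equiv_\omega X^{2^d-1}$ while $p_{(j;J)}(\phi_{(i;I)}(X^{2^d-1}))\equiv_\omega0$ whenever $(j;J)$ strictly precedes $(i;I)$. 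Granting this, a relation $\sum c_{(i;I)}\big[\phi_{(i;I)}(X^{2^d-1})\big]_\omega=0$, hit with $p_{(j;J)}$ for the least $(j;J)$ with $c_{(j;J)}=1$, forces $[X^{2^d-1}]=0$ in $QP_{k-1}(\omega)$, a contradiction.

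\textbf{Step 4 (spanning — the main obstacle).} Claim \textbf{(A)} is the technical heart, and I would attack it by induction on $d$ (the case $d=1$ being trivial, since then the set equals the tuple). For the inductive step one must, given two tuples with the same underlying set, exhibit an element of $\mathcal A^+P_k$ whose weight-$\omega$ part is the sum of the two associated monomials: concretely, choose a monomial $g$ of degree $n-2^s$ such that in $Sq^{2^s}(g)$ precisely those two monomials have weight $\omega$, all others having weight strictly below $\omega$ and hence lying in $P_k^-(\omega)$. The needed congruences split into two elementary moves: (i) \emph{transposition}, $\big[\cdots X_{j_a}^{2^a}\cdots X_{j_b}^{2^b}\cdots\big]_\omega=\big[\cdots X_{j_b}^{2^a}\cdots X_{j_a}^{2^b}\cdots\big]_\omega$, reducing the dependence to the \emph{multiset} $\{j_0,\dots,j_{d-1}\}$; and (ii) \emph{collapse}, identifying a multiset with a repeated entry with its underlying set. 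Picking the auxiliary monomials $g$ and verifying — via the weight-vector calculus of Section~\ref{s2}, aided by the observation that $Sq^{2^s}$ sends every weight-$\omega$ monomial into $P_k^-(\omega)$ (which pins down exactly which monomials of $Sq^{2^s}(g)$ can retain weight $\omega$) — that only the two intended monomials survive, is the demanding part. Once \textbf{(A)} and \textbf{(B)} hold, $B(d)$ is a basis of $QP_k(\omega)$ and $\dim QP_k(\omega)=|B(d)|=\sum_{t=1}^p\binom kt$.
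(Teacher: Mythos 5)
The paper gives no proof of Proposition \ref{mdcm1} at all --- it is imported verbatim from \cite{sp} --- so there is no in-paper argument to compare you against; your attempt has to stand on its own. Your Step 1 does: the identification of the weight-$\omega_{(k,d)}$ monomials with tuples $(j_0,\dots,j_{d-1})\in\mathbb N_k^d$ via $y=\prod_t X_{j_t}^{2^t}$, the parity argument showing $\omega_{(k,d)}$ is the maximal weight vector in degree $(k-1)(2^d-1)$ (so that $P_k(\omega_{(k,d)})=(P_k)_{(k-1)(2^d-1)}$), the verification that $X^{2^d-1}$ is $1$-compatible with $(i;I)$ exactly when $\ell(I)\leqslant d-1$, the exponent computation giving the tuple $(i_r,\dots,i_1,i,\dots,i)$, and the count $|\mathcal N_{k,p}|=\sum_{t=1}^p\binom kt$ all check out. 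The architecture --- spanning plus independence, with independence obtained by pushing down through the maps $p_{(j;J)}$ of Lemma \ref{bdm1} into the one-dimensional space $QP_{k-1}(\omega_{(k,d)})=\langle[X^{2^d-1}]_{\omega_{(k,d)}}\rangle$ --- is also the method of \cite{sp,su5}, and your deduction of independence from the asserted triangularity is logically sound.

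The gap is that the two statements carrying essentially all the content of the proposition are asserted rather than proved. In Step 3, the triangularity $p_{(i;I)}\phi_{(i;I)}(X^{2^d-1})\equiv_{\omega}X^{2^d-1}$ and $p_{(j;J)}\phi_{(i;I)}(X^{2^d-1})\equiv_{\omega}0$ for $(j;J)$ preceding $(i;I)$ is precisely the lemma one must establish: after the substitution $x_j\mapsto\sum_{s\in J}x_{s-1}$ the image is a sum of many monomials, and one has to show that every one of them except possibly the full spike has weight strictly below $\omega_{(k,d)}$; ``unwinding the substitutions'' is the proof, and it is not carried out. In Step 4 you correctly isolate the congruences needed for spanning (invariance of $[\prod_tX_{j_t}^{2^t}]_{\omega}$ under transposition of entries and under collapse of a repeated entry), but you never exhibit the polynomials $g$ with $Sq^{2^s}(g)$ realizing them, nor verify that the extraneous terms of $Sq^{2^s}(g)$ produced by the Cartan formula acting on the ``spectator'' factor $\prod_{t\neq a,b}X_{j_t}^{2^t}$ all land in $P_k^-(\omega_{(k,d)})$; this is where the real work of the spanning argument lives (an alternative, closer to the inductive pattern of Section \ref{s3}, would be induction on $d$ via Theorem \ref{dlcb1} together with explicit strict-inadmissibility relations). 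As written, the proposal is a correct and well-organized plan, not yet a proof.
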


\section{Proof of Main Theorem}\label{s3}

 According to a result in \cite{su5}, $B_4(4(2^{d}-1))$ is the set consisting of 21 monomials, namely: 

\medskip
\centerline{\begin{tabular}{ll}
$v_{d,1} =  x_1^{2^{d-1}-1}x_2^{2^{d-1}-1}x_3^{2^{d}-1}x_4^{2^{d+1}-1}$& $v_{d,2} =  x_1^{2^{d-1}-1}x_2^{2^{d-1}-1}x_3^{2^{d+1}-1}x_4^{2^{d}-1}$\cr 
$v_{d,3} =  x_1^{2^{d-1}-1}x_2^{2^{d}-1}x_3^{2^{d-1}-1}x_4^{2^{d+1}-1}$& $v_{d,4} =  x_1^{2^{d-1}-1}x_2^{2^{d}-1}x_3^{2^{d+1}-1}x_4^{2^{d-1}-1}$\cr 
$v_{d,5} =  x_1^{2^{d-1}-1}x_2^{2^{d+1}-1}x_3^{2^{d-1}-1}x_4^{2^{d}-1}$& $v_{d,6} =  x_1^{2^{d-1}-1}x_2^{2^{d+1}-1}x_3^{2^{d}-1}x_4^{2^{d-1}-1}$\cr 
$v_{d,7} =  x_1^{2^{d}-1}x_2^{2^{d-1}-1}x_3^{2^{d-1}-1}x_4^{2^{d+1}-1}$& $v_{d,8} =  x_1^{2^{d}-1}x_2^{2^{d-1}-1}x_3^{2^{d+1}-1}x_4^{2^{d-1}-1}$\cr 
$v_{d,9} =  x_1^{2^{d}-1}x_2^{2^{d+1}-1}x_3^{2^{d-1}-1}x_4^{2^{d-1}-1}$& $v_{d,10} =  x_1^{2^{d+1}-1}x_2^{2^{d-1}-1}x_3^{2^{d-1}-1}x_4^{2^{d}-1}$\cr 
$v_{d,11} =  x_1^{2^{d+1}-1}x_2^{2^{d-1}-1}x_3^{2^{d}-1}x_4^{2^{d-1}-1}$& $v_{d,12} =  x_1^{2^{d+1}-1}x_2^{2^{d}-1}x_3^{2^{d-1}-1}x_4^{2^{d-1}-1}$\cr 
$v_{d,13} =  x_1^{2^{d-1}-1}x_2^{2^{d}-1}x_3^{2^{d}-1}x_4^{2^d+2^{d-1}-1}$& $v_{d,14} =  x_1^{2^{d-1}-1}x_2^{2^{d}-1}x_3^{2^d+2^{d-1}-1}x_4^{2^{d}-1}$\cr 
$v_{d,15} =  x_1^{2^{d}-1}x_2^{2^{d-1}-1}x_3^{2^{d}-1}x_4^{2^d+2^{d-1}-1}$& $v_{d,16} =  x_1^{2^{d}-1}x_2^{2^{d-1}-1}x_3^{2^d+2^{d-1}-1}x_4^{2^{d}-1}$\cr 
$v_{d,17} =  x_1^{2^{d}-1}x_2^{2^{d}-1}x_3^{2^{d-1}-1}x_4^{2^d+2^{d-1}-1}$& $v_{d,18} =  x_1^{2^{d}-1}x_2^{2^{d}-1}x_3^{2^d+2^{d-1}-1}x_4^{2^{d-1}-1}$\cr 
$v_{d,19} =  x_1^{2^{d}-1}x_2^{2^d+2^{d-1}-1}x_3^{2^{d-1}-1}x_4^{2^{d}-1}$& $v_{d,20} =  x_1^{2^{d}-1}x_2^{2^d+2^{d-1}-1}x_3^{2^{d}-1}x_4^{2^{d-1}-1}$\cr 
$v_{d,21} =  x_1^{2^{d}-1}x_2^{2^{d}-1}x_3^{2^{d}-1}x_4^{2^{d}-1}$.& 
\end{tabular}}

\medskip
Note that $B_4(4(2^{d}-1)) = B_4(\omega_{(5,d)})\cup B_4(\bar\omega_{(5,d)})$, where
$$ B_4(\omega_{(5,d)}) =\{v_{d,21}\},\ B_4(\bar\omega_{(5,d)}) = \{v_{d,t}: 1 \leqslant t \leqslant 20\}.$$

Since $(P_5)_4 = (P_5^0)_4$, we obtain $B_5(4) = \Phi^0(B_4(4))$, $|B_5(4)| = 45$. It is easy to see that $|\Phi^0(B_4(\bar\omega_{(5,d)}))| = 100$, for $d > 1$. So, we obtain the following.

\begin{prop}\label{mdbsug}
For any integer $d > 1$, $\dim QP_5^0(\bar\omega_{(5,d)}) = 100.$
\end{prop}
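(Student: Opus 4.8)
Write $n=4(2^{d}-1)$. The idea is to show that the explicit $100$-element set $\Phi^{0}(B_{4}(\bar\omega_{(5,d)}))$ is precisely $B_{5}^{0}(\bar\omega_{(5,d)})$, the set of admissible monomials of weight vector $\bar\omega_{(5,d)}$ lying in $P_{5}^{0}$; since $[B_{5}^{0}(\bar\omega_{(5,d)})]_{\bar\omega_{(5,d)}}$ is a basis of $QP_{5}^{0}(\bar\omega_{(5,d)})$, the dimension will follow at once. The set $B_{4}(n)=\{v_{d,t}:1\le t\le 21\}$ of admissible monomials is a minimal set of $\mathcal A$-generators for $P_{4}$ in degree $n$, so by Proposition \ref{mdcnd} the set $\Phi^{0}(B_{4}(n))=\bigcup_{1\le i\le 5}f_{i}(B_{4}(n))$ is a minimal set of $\mathcal A$-generators for $P_{5}^{0}$ in degree $n$; in particular $|\Phi^{0}(B_{4}(n))|=\dim (QP_{5}^{0})_{n}=|B_{5}^{0}(n)|$.

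Next I would check that every monomial in $\Phi^{0}(B_{4}(n))$ is admissible, i.e. $\Phi^{0}(B_{4}(n))\subseteq B_{5}^{0}(n)$. Each $f_{i}$ only relabels variables, so $f_{i}(x)$ has the same weight vector as $x$ and omits $x_{i}$, hence lies in $P_{5}^{0}$; and $f_{i}$ carries admissible monomials to admissible ones, because $p_{(i;\emptyset)}$ is a map of $\mathcal A$-modules with $p_{(i;\emptyset)}f_{i}=\mathrm{id}$: if $f_{i}(x)\equiv\sum_{j}y_{j}$ with $y_{j}<f_{i}(x)$, applying $p_{(i;\emptyset)}$ gives $x\equiv\sum_{j}p_{(i;\emptyset)}(y_{j})$, where by Lemma \ref{bdm1} and a routine comparison of exponent vectors each nonzero $p_{(i;\emptyset)}(y_{j})$ is a single monomial $<x$, contradicting admissibility of $x$. (This preservation property may instead be quoted from \cite{su5}.) Thus $\Phi^{0}(B_{4}(n))$ is a subset of $B_{5}^{0}(n)$ of the same finite cardinality, so $\Phi^{0}(B_{4}(n))=B_{5}^{0}(n)$.

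Finally I would split this equality by weight vectors. By the remarks preceding the Proposition, $B_{4}(n)=B_{4}(\omega_{(5,d)})\sqcup B_{4}(\bar\omega_{(5,d)})$ with $B_{4}(\omega_{(5,d)})=\{v_{d,21}\}$ and $B_{4}(\bar\omega_{(5,d)})=\{v_{d,t}:1\le t\le 20\}$; since $f_{i}$ preserves weight vectors, this gives $B_{5}^{0}(\bar\omega_{(5,d)})=\Phi^{0}(B_{4}(\bar\omega_{(5,d)}))$. It remains to count: for $d>1$ every $v_{d,t}$ with $1\le t\le 20$ lies in $P_{4}^{+}$, so $f_{i}(v_{d,t})$ has $x_{i}$ as its unique variable of exponent $0$; therefore the five sets $f_{i}(B_{4}(\bar\omega_{(5,d)}))$ are pairwise disjoint, each $f_{i}$ is injective on $B_{4}(\bar\omega_{(5,d)})$, and $|\Phi^{0}(B_{4}(\bar\omega_{(5,d)}))|=5\cdot 20=100$. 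Hence $\dim QP_{5}^{0}(\bar\omega_{(5,d)})=|B_{5}^{0}(\bar\omega_{(5,d)})|=100$.

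The one genuinely non-bookkeeping point is the middle step, that the operators $f_{i}$ send admissible monomials to admissible monomials; this is exactly where the order relation on monomials and the $\mathcal A$-linearity of $p_{(i;\emptyset)}$ enter. If that fact is taken from the literature, everything else is the weight-vector decomposition together with the disjointness count, and the Proposition is immediate.
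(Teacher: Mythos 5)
Your proof is correct and takes essentially the same route as the paper: the paper's entire argument is the remark that $|\Phi^0(B_4(\bar\omega_{(5,d)}))| = 100$ for $d>1$, resting on Proposition \ref{mdcnd} and the weight-vector decomposition. You have simply supplied the details it labels as easy to see (that the maps $f_i$ preserve admissibility and weight vectors, and the disjointness count $5\times 20 = 100$ using $v_{d,t}\in P_4^+$ for $d>1$), and you have supplied them correctly.
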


\begin{lem}\label{bdday}
If $x$  is an admissible monomial of degree $4(2^{d}-1)$ in  $P_5$, then either
$\omega(x) = \omega_{(5,d)}$ or $\omega(x) = \bar\omega_{(5,d)}.$
\end{lem}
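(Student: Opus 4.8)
The plan is to show that any admissible monomial $x$ of degree $n = 4(2^d-1)$ in $P_5$ must have weight vector equal to one of the two candidates. First I would compute $\mu(n)$: since $4(2^d-1) = (2^{d+1}-1) + (2^d-1) + (2^d-1) + (2^{d-1}-1) + (2^{d-1}-1)$ (and one checks no shorter expression exists), we get $\mu(n) = 5$, and the minimal spike $z$ of degree $n$ in $P_5$ is $z = x_1^{2^{d+1}-1}x_2^{2^d-1}x_3^{2^d-1}x_4^{2^{d-1}-1}x_5^{2^{d-1}-1}$, whose weight vector is $\omega(z) = ((5)^{(d-1)},4,2) = \bar\omega_{(5,d)}$. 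By Theorem \ref{dlsig}, if $\omega(x) < \omega(z)$ then $x$ is hit, hence not admissible; so for an admissible $x$ we must have $\omega(x) \geqslant \bar\omega_{(5,d)}$.

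Next I would bound $\omega(x)$ from above. Since $\deg \omega(x) = n = 4(2^d-1)$ and the first $d$ coordinates of a weight vector of a monomial in $P_5$ lie in $\{0,1,\dots,5\}$, the largest weight vectors of degree $n$ are $\omega_{(5,d)} = ((5)^{(d)})$ (with $\deg = 5(2^d-1) = n + (2^d - 1) > n$ — so this is actually impossible) — here I need to be careful: $\deg\omega_{(5,d)} = 5(2^d-1) \neq n$, so in fact $\omega_{(5,d)}$ as defined for $k=5$ is \emph{not} a weight vector of degree $n$. I should instead recall from the excerpt that $B_4(4(2^d-1)) = B_4(\omega_{(5,d)}) \cup B_4(\bar\omega_{(5,d)})$ where these refer to weight vectors realized inside $P_4$ — so the notation $\omega_{(5,d)}$, $\bar\omega_{(5,d)}$ here denote $((4)^{(d)})$ and $((4)^{(d-1)},2,1)$ respectively (the $k=5$ subscript is the ambient-variable bookkeeping from \cite{su5}, not the entry size). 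With that reading, $\deg\omega_{(5,d)} = 4(2^d-1) = n$ and $\deg\bar\omega_{(5,d)} = 4(2^{d-1}-1)\cdot 2^{?}$... I would verify $\deg((4)^{(d-1)},2,1) = 4(2^{d-1}-1) + 2\cdot 2^{d-1} + 2^d = 4\cdot 2^{d-1} - 4 + 2^d + 2^d = 2^{d+1} + 2^{d+1} - 4 = 4\cdot 2^d - 4 = n$. Good. So both candidates have degree $n$, and $\omega_{(5,d)} > \bar\omega_{(5,d)}$ in the left lexicographic order.

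So the real content is: any weight vector $\omega$ of degree $n$ with $\bar\omega_{(5,d)} \leqslant \omega$ must equal $\bar\omega_{(5,d)}$ or $\omega_{(5,d)}$, \emph{provided} $\omega = \omega(x)$ for an admissible $x$. The purely combinatorial part — enumerating weight vectors $\omega = (\omega_1,\omega_2,\dots)$ with $0 \le \omega_i \le 5$, $\sum 2^{i-1}\omega_i = n$, and $\omega \geqslant \bar\omega_{(5,d)}$ — will produce several candidates besides the two desired ones (e.g. vectors with some $\omega_i = 5$ in an early slot but then smaller entries, or $\omega_1 = 4, \omega_2 = 5, \dots$). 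For each such spurious $\omega$ I would argue that no admissible monomial can have that weight vector, using Theorem \ref{dlcb1} (the inadmissibility-propagation lemma) together with the known answer for $P_5$ in the smaller degree reached by stripping off the top block — i.e. an inductive/descent argument via Kameko-type reductions and the structure of $QP_5$ in degrees $< n$, or more directly by invoking Theorem \ref{dlsig} once more after observing $\omega < \omega(z)$ fails only for $\omega \in \{\bar\omega_{(5,d)}, \omega_{(5,d)}\}$ among those that also satisfy the spike-domination needed.

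The main obstacle I anticipate is exactly this last step: ruling out the intermediate weight vectors $\bar\omega_{(5,d)} < \omega < \omega_{(5,d)}$. These are not eliminated by Singer's criterion alone (they are $\geqslant \omega(z)$), so one needs either a hands-on argument that every monomial with such an $\omega$ is strictly inadmissible — by exhibiting the Steenrod operations lowering it, then propagating via Theorem \ref{dlcb1}(i) — or a dimension count showing $QP_5^\omega = 0$ for those $\omega$. I would handle the generic case $d \geqslant 2$ uniformly by writing $x = X_{\mathbb J_0(x)} \cdot (x')^2$ and analyzing $\mathbb J_0(x)$: if $|\mathbb J_0(x)| = 4$ we are forced toward $\bar\omega_{(5,d)}$, if $|\mathbb J_0(x)| = 5$ toward $\omega_{(5,d)}$, and $|\mathbb J_0(x)| \leqslant 3$ would make $\omega_1(x) \geqslant 3$ but then the remaining degree forces a violation of admissibility in $P_5$ at lower degree, which is controlled inductively. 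The small cases $d=1$ (where $\bar\omega_{(5,1)}$ degenerates) would be checked separately by direct computation.
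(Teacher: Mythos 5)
There is a genuine gap, and also a computational error at the outset. The error first: your decomposition $4(2^d-1) = (2^{d+1}-1)+(2^d-1)+(2^d-1)+(2^{d-1}-1)+(2^{d-1}-1)$ does not sum to $4(2^d-1)$ (it gives $2^{d+2}+2^d-5$), so $\mu(n)\ne 5$. In fact $4(2^d-1)=(2^{d+1}-1)+(2^d-1)+(2^{d-1}-1)+(2^{d-1}-1)$, so $\mu(n)=4$ and the minimal spike is $z=x_1^{2^{d+1}-1}x_2^{2^d-1}x_3^{2^{d-1}-1}x_4^{2^{d-1}-1}$, whose weight vector is $(4^{(d-1)},2,1)=\bar\omega_{(5,d)}$. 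You do eventually recover the correct target vectors $\omega_{(5,d)}=(4^{(d)})$ and $\bar\omega_{(5,d)}=(4^{(d-1)},2,1)$, but the spike you name has the wrong degree and your stated $\omega(z)$ is wrong as well.

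The real gap is the step you yourself flag as ``the main obstacle'': eliminating the weight vectors of degree $n$ lying strictly between $\bar\omega_{(5,d)}$ and $\omega_{(5,d)}$. You propose enumerating them and killing each by ad hoc inadmissibility arguments or dimension counts, but you never carry this out, so the proof is incomplete exactly where the content of the lemma lies. The paper closes this with two observations you are missing. First, parity: $\deg x \equiv \omega_1(x)\pmod 2$, so $\omega_1(x)\in\{0,2,4\}$; since $\omega_1(z)=4$, any $x$ with $\omega_1(x)<4$ has $\omega(x)<\omega(z)$ and is hit by Theorem \ref{dlsig}, hence $\omega_1(x)=4$ for admissible $x$ --- no separate treatment of $\omega_1=3$ or $5$ is needed. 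Second, induction on $d$: $\omega_1(x)=4$ means $x=X_iy^2$ with $\deg y = 4(2^{d-1}-1)$; by Theorem \ref{dlcb1}(i) $y$ is admissible, so the inductive hypothesis gives $\omega(y)\in\{\omega_{(5,d-1)},\bar\omega_{(5,d-1)}\}$ and hence $\omega(x)=(4,\omega(y))$ is one of the two stated vectors. This disposes of all intermediate weight vectors in one stroke, with the base case $d=1$ read off from $B_5(4)=\Phi^0(B_4(4))$. Your final paragraph gropes toward this descent but has the bookkeeping reversed ($\omega_1(x)=5-|\mathbb J_0(x)|$, so the relevant case is $|\mathbb J_0(x)|=1$, not $4$ or $5$) and never invokes the parity constraint that makes the descent work.
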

\begin{proof} We prove the lemma by induction on $d$. For $d= 1$, since $x\in B_5(4) = \Phi^0(B_4(4))$,  we have either $\omega(x) = (4, 0)=\omega_{(5,1)}$ or $\omega(x) = (2, 1) = \bar\omega_{(5,1)}$. The lemma holds for $d=1$.

Suppose $d> 1$ and the lemma holds for $1,2,\ldots, d-1$.  Observe that the monomial $z = x_1^{2^{d+1}-1}x_2^{2^{d}-1}x_3^{2^{d-1}-1}x_4^{2^{d-1}-1}$ is the minimal spike of degree $4(2^{d}-1)$ in $P_5$ and  $\omega(z) =  \bar\omega_{(5,d)}.$

 Since $4(2^{d} - 1)$ is even, one gets either $\omega_1(x) = 0$ or $\omega_1(x) = 2$ or $\omega_1(x) = 4.$ If either $\omega_1(x) = 0$ or $\omega_1(x) = 2$ then $\omega(x) < \omega(z).$ By Theorem \ref{dlsig},  $x$ is hit. This contradicts the fact that $x$ is admissible. So, $\omega_1(x) = 4$ and  $x = X_iy^2$ with $1 \leqslant i \leqslant 5$ and $y$ a monomial of degree $4(2^{d-1}-1)$. Since $x$ is admissible, according to Theorem \ref{dlcb1}, $y$ is also admissible. Now, the lemma follows from the inductive hypothesis.
\end{proof}

Since $n= 4(2^d-1) = 2^{d+1} + 2^d + 2^{d-1} + 2^{d-1} -4$, using Theorem \ref{mdc1},
 we get $\dim (QP_5)_{4(2^d-1)} = (2^5-1)\times 21 = 651$ for $d \geqslant 5.$ 

By Lemma \ref{bdday}, we have
$$(QP_5)_{4(2^d-1)} \cong QP_5(\omega_{(5,d)}) \oplus QP_5(\bar\omega_{(5,d)}). $$
Hence, combining Theorem \ref{mdc1} and Proposition \ref{mdcm1}, we obtain $\dim QP_5(\bar\omega_{(5,d)}) = 620$ for $d \geqslant 5$. So, we need only to compute $QP_5(\bar\omega_{(5,d)})$ for $2 \leqslant d \leqslant 4$.

For simplicity, we prove the theorem in detail for the case $d=2$. The others can be proved by a similar computation. The admissible monomials $a_{d,t}$ of degree $4(2^d - 1)$ in $P_5^+$ are explicitly determined in Section 4.

\medskip
\subsection{\it The case $d=2$}\
\setcounter{equation}{0}

\medskip
For $d=2$, we have $4(2^d-1) = 12$. By a direct computation we see that 
$$\Phi^+(B_4(\bar\omega_{(5,2)}))\cup \{x_1^3x_2^4x_3x_4x_5^3,x_1^3x_2^4x_3x_4^3x_5,x_1^3x_2^4x_3^3x_4x_5\}$$
is the set consisting of 75 monomials: $a_{2,t},\ 1 \leqslant t \leqslant 75$ (see Section 4).

\begin{props}\label{props2} The set $\{[a_{2,t}],\ 1 \leqslant t \leqslant 75\}$ is a basis of the $\mathbb F_2$-vector space $QP_5^+(\bar\omega_{(5,2)})$.
\end{props}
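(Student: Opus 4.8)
The plan is to establish the two complementary facts that any basis proof of this type requires: first, that the set $\{[a_{2,t}] : 1\leqslant t\leqslant 75\}$ spans $QP_5^+(\bar\omega_{(5,2)})$, and second, that it is linearly independent in that quotient. For the spanning part, I would start from the general machinery: by Lemma~\ref{bdday} every admissible monomial of degree $12$ in $P_5$ has weight vector $\omega_{(5,2)}$ or $\bar\omega_{(5,2)}$, so the admissible monomials in $P_5^+$ of this degree with weight $\bar\omega_{(5,2)}$ form a basis of $QP_5^+(\bar\omega_{(5,2)})$. Thus it suffices to show that every monomial $x\in P_5^+$ with $\omega(x)=\bar\omega_{(5,2)}$ is either inadmissible or is congruent (modulo $\mathcal A^+P_5 + P_5^-(\bar\omega_{(5,2)})$, i.e. $\equiv_{\bar\omega_{(5,2)}}$) to a linear combination of the listed $a_{2,t}$. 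Concretely, one enumerates all monomials $x=x_1^{a_1}\cdots x_5^{a_5}$ of degree $12$ with all $a_i>0$ whose weight vector is $\bar\omega_{(5,2)}=(4,4,1)$ (so $\sum\alpha_0(a_i)=4$, $\sum\alpha_1(a_i)=4$, $\sum\alpha_2(a_i)=1$, and higher digits vanish), and for each such $x$ that is not among the $a_{2,t}$ one exhibits a witness: monomials $y_j<x$ together with an explicit hit-element expression $x\equiv_{\bar\omega_{(5,2)}}\sum y_j$, typically produced by applying the Cartan formula to $Sq^u(h)$ for small $u$ and a carefully chosen $h$, then discarding terms of strictly smaller weight. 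Here Theorem~\ref{dlcb1}(i) and the notion of strict inadmissibility let one propagate inadmissibility from lower-variable or lower-degree building blocks, cutting down the casework substantially.

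For linear independence, the standard strategy is to assume a relation $\sum_{t\in T}\gamma_t\,a_{2,t}\equiv 0$ with $\gamma_t\in\mathbb F_2$ and derive $\gamma_t=0$ for all $t$, by hitting the relation with the homomorphisms $p_{(i;I)}:QP_5(\bar\omega_{(5,2)})\to QP_4(\bar\omega_{(5,2)})$ defined before Lemma~\ref{bdm1}, which preserve the weight vector. Since we know $QP_4(\bar\omega_{(5,2)})$ explicitly — its basis is $[B_4(\bar\omega_{(5,2)})]=\{[v_{2,t}] : 1\leqslant t\leqslant 20\}$ — each $p_{(i;I)}$ applied to the relation yields a linear relation among the $[v_{2,t}]$, which must be trivial coefficient-by-coefficient. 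Running over enough choices of $(i;I)\in\mathcal N_5$ (the homomorphisms $p_{(1;(2)),}p_{(1;(2,3))},\ldots$ and so on) produces a system of linear equations in the $\gamma_t$ that one checks has only the zero solution. Because $P_5^+\subset P_5$ and the $a_{2,t}$ lie in $P_5^+$, one may also need the fact that $p_{(i;I)}$ kills nothing essential here; if a few coefficients survive all the $p_{(i;I)}$, one supplements with a direct argument showing the corresponding monomials are not hit (e.g. by identifying a spike-detecting functional or by the explicit congruences).

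The main obstacle I expect is the sheer volume and delicacy of the spanning step: one must correctly enumerate the finitely many monomials of weight $(4,4,1)$ in $P_5^+$ — there are on the order of a few hundred before reduction — and for each non-basis monomial produce a valid reduction modulo $\mathcal A^+P_5+P_5^-(\bar\omega_{(5,2)})$, being careful that every auxiliary term genuinely has weight $\leqslant\bar\omega_{(5,2)}$ and that the terms one keeps are exactly the $a_{2,t}$. Errors creep in when a Cartan-formula expansion secretly reintroduces a monomial of weight $\bar\omega_{(5,2)}$ not on the list, or when an inadmissibility claim relies on a comparison $y<x$ that fails. The linear-independence step is comparatively mechanical once $QP_4(\bar\omega_{(5,2)})$ is in hand, but choosing a set of $p_{(i;I)}$ whose combined image detects all $75$ coefficients still requires some care; I would organize it by first using the $f_i=p_{(i;\emptyset)}$-adjacent maps to peel off the monomials in $\Phi^0$-type position, then the higher-length $p_{(i;I)}$ for the genuinely $P_5^+$ classes such as $x_1^3x_2^4x_3x_4x_5^3$ and its two companions.
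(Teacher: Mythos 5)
Your overall strategy coincides with the paper's: spanning is obtained from explicit strict-inadmissibility relations propagated by Theorem~\ref{dlcb1}, and linear independence by pushing a putative relation $\sum_t\gamma_t a_{2,t}\equiv 0$ through the weight-preserving maps $p_{(i;I)}$ into $QP_4(\bar\omega_{(5,2)})$, whose basis $\{[v_{2,t}] : 1\leqslant t\leqslant 20\}$ is already known. Two corrections and comparisons are in order. First, you have the weight vector wrong: by definition $\bar\omega_{(5,2)}=((5-1)^{(1)},5-3,1)=(4,2,1)$, not $(4,4,1)$; indeed $(4,4,1)$ has degree $4+8+4=16\neq 12$, so the enumeration you describe (degree-$12$ monomials with $\sum_i\alpha_1(a_i)=4$) is vacuous. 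The correct digit conditions are $\sum_i\alpha_0(a_i)=4$, $\sum_i\alpha_1(a_i)=2$, $\sum_i\alpha_2(a_i)=1$. Second, the paper does not enumerate all weight-$\bar\omega_{(5,2)}$ monomials in $P_5^+$: since $\omega_1(x)=4$ forces $x=X_jy^2$ with $y$ of degree $4$, and $y$ must itself be admissible (Theorem~\ref{dlcb1}), only the products $X_jz^2$ with $z\in B_5(\bar\omega_{(5,1)})$ and $1\leqslant j\leqslant 5$ need to be tested, and each non-listed one is shown to factor as $wz_1^{2^u}$ with $w$ one of the two small families of strictly inadmissible monomials in Lemma~\ref{bd11}. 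This is a far smaller and more structured check than the brute-force enumeration you propose, and it is the inductive engine reused for $d=3,4$. Your independence step matches the paper's exactly; there the relations $p_{(i;j)}(\mathcal S)\equiv 0$ for $1\leqslant i<j\leqslant 5$ together with $p_{(1;(2,3))}$ and $p_{(1;(2,4))}$ already force all $\gamma_t=0$, so no supplementary argument about surviving coefficients is needed.
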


The proof of the proposition is based on Theorem \ref{dlcb1} and the following.
\begin{lems}\label{bd11} The following monomials are strictly inadmissible:

\medskip
\rm{(i)} $x_i^2x_jx_\ell x_mx_n^3,\ i < j <\ell <m$,

\rm{(ii)} $x_i^2x_jx_\ell^3 x_m^3x_n^3,\ i < j$.

\medskip
\noindent Here $(i, j,\ell,m,n) $ is a permutation of $(1,2,3,4,5)$.
\end{lems}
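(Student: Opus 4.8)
The plan is to treat both families at once by the same mechanism: for each monomial $x$ in the two lists I will exhibit a single monomial $b$ with $x = Sq^1(b) + \sum_t y_t$ where each $y_t < x$, and then note that every such $x$ has $s := \max\{i : \omega_i(x) > 0\} = 2$, so that $Sq^1 = Sq^{2^0}$ is one of the operations $Sq^u$, $1 \leqslant u \leqslant 2^s - 1 = 3$, permitted in the definition of strict inadmissibility (the remaining operations play no role: $h_2 = h_3 = 0$, $h_1 = b$). Thus producing the displayed identity \emph{is} the proof.

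For part (i), put $x = x_i^2x_jx_\ell x_mx_n^3$ and $b = x_ix_jx_\ell x_mx_n^3$. The Cartan formula, with $Sq^1(x_p) = x_p^2$ and $Sq^1(x_n^3) = 3x_n^4 = x_n^4$, gives
$$Sq^1(b) = x_i^2x_jx_\ell x_mx_n^3 + x_ix_j^2x_\ell x_mx_n^3 + x_ix_jx_\ell^2x_mx_n^3 + x_ix_jx_\ell x_m^2x_n^3 + x_ix_jx_\ell x_mx_n^4,$$
so $x$ equals $Sq^1(b)$ plus the four remaining monomials on the right. For part (ii), put $x = x_i^2x_jx_\ell^3x_m^3x_n^3$ and $b = x_ix_jx_\ell^3x_m^3x_n^3$; the same computation (now using $Sq^1(x_p^3) = x_p^4$) yields
$$Sq^1(b) = x_i^2x_jx_\ell^3x_m^3x_n^3 + x_ix_j^2x_\ell^3x_m^3x_n^3 + x_ix_jx_\ell^4x_m^3x_n^3 + x_ix_jx_\ell^3x_m^4x_n^3 + x_ix_jx_\ell^3x_m^3x_n^4,$$
and again $x$ is $Sq^1(b)$ plus the four remaining monomials.

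It then remains to check that each of those four remaining monomials is $< x$ in the order of Definition \ref{defn3}. One computes $\omega(x) = (4,2)$ in case (i) and $\omega(x) = (4,4)$ in case (ii). For the monomial in which one factor $x_p^3$ has been replaced by $x_p^4$ (so one exponent $3$ becomes a $4$), the weight vector has the shape $(4,\ast,1,\ldots)$ with second entry strictly below that of $\omega(x)$, hence this monomial is $< x$ by clause i) of Definition \ref{defn3}. For the monomial in which the exponent $2$ has been relocated from position $i$ to a position of larger index, the exponent multiset is unchanged, so its weight vector equals $\omega(x)$; here one uses that every index strictly below $i$ carries the same exponent in both monomials — it is either absent or a position carrying exponent $3$ (in case (i) one has $i \in \{1,2\}$, and if $i = 2$ the unique index below $i$ is $n$; in case (ii) every index below $i$ lies in $\{\ell,m,n\}$) — so the two exponent vectors first disagree at position $i$, where $x$ has $2$ and the other monomial has $1$; hence the latter is $< x$ by clause ii). This completes the verification.

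I do not expect a genuine obstacle: the computations are routine, and the only point that needs a moment's care is the last one, namely confirming that the indices strictly below $i$ are inert under moving the exponent $2$, so that the comparison of exponent vectors is decided precisely at position $i$.
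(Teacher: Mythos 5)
Your proposal is correct and is essentially identical to the paper's own proof: the authors use exactly the same decomposition $x = Sq^1(x_ix_jx_\ell x_mx_n^3) + (\text{four smaller monomials})$ in case (i) and $x = Sq^1(x_ix_jx_\ell^3 x_m^3x_n^3) + (\text{four smaller monomials})$ in case (ii). The only difference is that you spell out the verification that the remaining terms are $<x$ (which the paper leaves implicit), and that check is carried out correctly.
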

\begin{proof} We have
\begin{align*}
x_i^2x_jx_\ell x_mx_n^3 &= x_ix_j^2x_\ell x_mx_n^3 + x_ix_jx_\ell^2 x_mx_n^3 + x_ix_jx_\ell x_m^2x_n^3\\
&\quad + x_ix_jx_\ell x_mx_n^4 + Sq^1(x_ix_jx_\ell x_mx_n^3),\\
x_i^2x_jx_\ell^3 x_m^3x_n^3 &= x_ix_j^2x_\ell^3 x_m^3x_n^3+x_ix_jx_\ell^4 x_m^3x_n^3+x_ix_jx_\ell^3 x_m^4x_n^3\\
&\quad + x_ix_jx_\ell^3 x_m^3x_n^4 + Sq^1(x_ix_jx_\ell^3 x_m^3x_n^3).
\end{align*}
The lemma follows from the above equalities.
\end{proof}
\begin{lems}\label{bd12} The $\mathbb F_2$-vector space $QP_5^+(\bar\omega_{(5,2)})$ is spanned by the set 
$$\{[a_{2,t}],\ 1 \leqslant t \leqslant 75\}.$$
\end{lems}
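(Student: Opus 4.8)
The plan is to establish the spanning statement of Lemma~\ref{bd12} by the standard descent technique: start from the generating set $\Phi(B_4(\bar\omega_{(5,2)}))$ for the whole module in this weight, pass to the $P_5^+$-part, and then systematically reduce any monomial of degree $12$ with weight vector $\leqslant \bar\omega_{(5,2)}$ and a nontrivial last-variable pattern to a linear combination (modulo $\mathcal A^+P_5 + P_5^-(\bar\omega_{(5,2)})$) of the $75$ listed monomials $a_{2,t}$. First I would record the shape forced by $\omega(x) = \bar\omega_{(5,2)} = (4,2,1)$: writing $x = X_iy^2$ as in Lemma~\ref{bdday}, the exponents are tightly constrained, so there are only finitely many candidate monomials in $P_5^+(\bar\omega_{(5,2)})$ to examine, and each can be listed explicitly. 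Then I would go through these candidates and, for every one not already among the $a_{2,t}$, exhibit it as strictly inadmissible — either directly via an $Sq^u$-straightening identity, or by appealing to Theorem~\ref{dlcb1}(ii) to lift a strictly inadmissible monomial in fewer variables multiplied by a square.

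The engine of the reduction is Lemma~\ref{bd11}: part (i) kills every monomial of the form $x_i^2x_jx_\ell x_mx_n^3$ and part (ii) kills $x_i^2x_jx_\ell^3x_m^3x_n^3$, across all permutations $(i,j,\ell,m,n)$ of $(1,2,3,4,5)$. Combined with Theorem~\ref{dlcb1}(i)–(ii), these two families propagate: any monomial whose "lower part" (in the $\omega$-splitting $x = xw^{2^r}$) contains one of these patterns is inadmissible, so it lies in $\mathcal A^+P_5 + P_5^-(\bar\omega_{(5,2)})$ and hence is $\equiv_{\bar\omega_{(5,2)}} 0$ or reduces to smaller monomials. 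Carrying this out candidate-by-candidate, I expect every monomial of degree $12$ in $P_5^+$ with weight vector $\bar\omega_{(5,2)}$ that is not one of the $a_{2,t}$ to be eliminated, which gives exactly the claimed spanning set. The bookkeeping — enumerating the monomials, checking the $u$-compatibility conditions behind $\Phi^+$, and matching each discarded monomial to the appropriate instance of Lemma~\ref{bd11} or Theorem~\ref{dlcb1} — is routine but voluminous, and I would organize it as a table rather than prose.

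The main obstacle is not conceptual but combinatorial: ensuring the enumeration of degree-$12$ monomials with $\omega = \bar\omega_{(5,2)}$ in $P_5^+$ is complete and that no monomial slips through the net of strictly-inadmissible patterns. In particular, the three "extra" monomials $x_1^3x_2^4x_3x_4x_5^3$, $x_1^3x_2^4x_3x_4^3x_5$, $x_1^3x_2^4x_3^3x_4x_5$ adjoined to $\Phi^+(B_4(\bar\omega_{(5,2)}))$ must be shown to be genuinely needed (they are admissible, not reducible), while their "cousins" under permutation of indices must be shown reducible; distinguishing these two cases cleanly is the delicate point. One must also be careful that the straightening identities in Lemma~\ref{bd11} only involve monomials that are strictly smaller in the order of Definition~\ref{defn3}, so that the reductions are valid modulo $P_5^-(\bar\omega_{(5,2)})$ and do not reintroduce larger terms. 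Once the spanning is in hand, Proposition~\ref{props2} follows by combining Lemma~\ref{bd12} with a linear-independence check, which in turn rests on applying the homomorphisms $p_{(i;I)}$ of Lemma~\ref{bdm1} to detect the classes $[a_{2,t}]$ in the quotients $QP_4(\bar\omega_{(5,2)})$ — exactly the pattern used throughout~\cite{su5}.
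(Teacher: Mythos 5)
Your proposal matches the paper's argument: both reduce to writing an admissible $x$ with $\omega(x)=\bar\omega_{(5,2)}$ as $X_jy^2$ with $y$ admissible of degree $4$ (via Theorem \ref{dlcb1}), then check the finitely many products $X_jz^2$, $z\in B_5(\bar\omega_{(5,1)})$, and eliminate every one not equal to some $a_{2,t}$ by exhibiting it as $wz_1^{2^u}$ with $w$ one of the strictly inadmissible monomials of Lemma \ref{bd11}. The only cosmetic difference is that the paper does not route the spanning step through $\Phi(B_4(\bar\omega_{(5,2)}))$; that set merely serves to name the candidates $a_{2,t}$.
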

\begin{proof} Let $x$ be an admissible monomial in $P_5$ such that $\omega(x) = \bar\omega_{(5,2)}$. Then, $\omega_1(x)=4$, $x = X_jy^2$ with $1\leqslant j \leqslant 5$ and $y$ a monomial of degree 4 in $P_5$. Since $x$ is admissible, according to Theorem \ref{dlcb1}, $y \in B_5(\bar\omega_{(5,1)})$. 

Let $z \in B_5(\bar\omega_{(5,1)})$ and $1\leqslant j \leqslant 5$. 
By a direct computation we see that if $X_jz^2\ne a_{2,t}, \forall t, 1 \leqslant t \leqslant 75$, then there is a monomial $w$ which is given in Lemma \ref{bd11} such that $X_jz^2= wz_1^{2^{u}}$ with suitable monomial $z_1 \in P_5$, and $u = \max\{s \in \mathbb Z : \omega_s(w) >0\}$. By Theorem \ref{dlcb1}, $X_jz^2$ is inadmissible. Since $x = X_jy^2$ with  $y \in B_5(\bar\omega_{(5,1)})$ and $x$ is admissible, one gets $x = a_{2, t}$ for some $t$. The lemma is proved.
\end{proof}
\begin{lems}\label{bd13} The set  $\{[a_{2,t}],\ 1 \leqslant t \leqslant 75\}$ is linearly independent in $QP_5^+(\bar\omega_{(5,2)})$.
\end{lems}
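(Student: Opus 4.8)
The plan is to establish the linear independence of the classes $[a_{2,t}]$, $1\leqslant t\leqslant 75$, in $QP_5^+(\bar\omega_{(5,2)})$ by the standard ``homomorphism $p_{(i;I)}$ plus restriction to subalgebras'' technique used throughout the second named author's work. Suppose we have a linear relation
\begin{equation}\label{reln}
\mathcal S:=\sum_{t=1}^{75}\gamma_t\,a_{2,t}\equiv_{\bar\omega_{(5,2)}}0,\qquad \gamma_t\in\mathbb F_2.
\end{equation}
The goal is to force every $\gamma_t=0$. First I would record the action of the homomorphisms $p_{(i;I)}:QP_5(\bar\omega_{(5,2)})\to QP_4(\bar\omega_{(5,2)})$ on each generator $a_{2,t}$; by Lemma \ref{bdm1} these land in $QP_4(\bar\omega_{(5,2)})=\langle [B_4(\bar\omega_{(5,2)})]\rangle$, a space of known dimension $20$ with explicit basis $\{v_{2,1},\ldots,v_{2,20}\}$. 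Applying the relation \eqref{reln} under each $p_{(i;I)}$ for $(i;I)\in\mathcal N_5$ (there are finitely many, and the relevant ones have $\ell(I)\le 2$ by degree reasons) yields a system of linear relations among the $v_{2,j}$; since the latter are linearly independent in $QP_4$, each such relation gives linear constraints on the $\gamma_t$. I expect that collecting the constraints from all $p_{(i;I)}$ already kills a large majority of the coefficients — typically all those $a_{2,t}$ lying in $P_5^0$ plus many in $P_5^+$.

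Next I would treat the ``hard core'' of coefficients that survive the $p_{(i;I)}$ test — these correspond to monomials, such as $x_1^3x_2^4x_3x_4x_5^3$ and its relatives, on which the $p_{(i;I)}$ behave degenerately. For these I would pass to the subalgebras obtained by setting one variable equal to $0$, i.e. apply the projections $P_5\to P_4$ killing $x_q$; restricting the surviving part of \eqref{reln} to each such $P_4$ and using that we already know $B_4(\bar\omega_{(5,2)})$ and hence the structure of $QP_4$ in this degree, we get further relations. Finally, the genuinely $P_5^+$ terms that remain must be handled by exhibiting explicit hit-equivalences: one shows that any nontrivial combination of them is congruent, modulo $\mathcal A^+P_5+P_5^-(\bar\omega_{(5,2)})$, to a sum of monomials strictly smaller in the order of Definition \ref{defn3}, contradicting admissibility unless the combination is empty. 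This last argument typically uses the Cartan formula applied to a few carefully chosen $Sq^{2^i}(h)$ together with Theorem \ref{dlcb1}(ii) to propagate strict inadmissibility.

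The main obstacle, as always in these hit-problem computations, is the final bookkeeping step: after the $p_{(i;I)}$ and the variable-killing projections have done their work, a small residual subset of the $a_{2,t}$ (the ones genuinely in $P_5^+$ with ``generic'' exponents) must be shown independent by a direct, hands-on argument, and organizing the finitely many explicit $\mathcal A^+$-relations needed to do this cleanly is delicate and error-prone. One must be careful that each claimed equivalence $\equiv_{\bar\omega_{(5,2)}}$ genuinely lies in $\mathcal A^+P_5+P_5^-(\bar\omega_{(5,2)})$ and not merely in $\mathcal A^+P_5$; this is where the weight-vector order and the definition of $\equiv_\omega$ must be invoked with care. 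Modulo that computational labor, the structure of the proof is forced: $p_{(i;I)}$-images plus subalgebra restrictions plus Theorem \ref{dlcb1} give a triangular system in the $\gamma_t$, and solving it yields $\gamma_t=0$ for all $t$, completing the proof that $\{[a_{2,t}]:1\leqslant t\leqslant 75\}$ is a basis of $QP_5^+(\bar\omega_{(5,2)})$.
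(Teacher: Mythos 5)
Your primary step is exactly the paper's proof: apply the $\mathcal A$-module homomorphisms $p_{(i;I)}$ to the relation $\mathcal S\equiv 0$, land in $QP_4(\bar\omega_{(5,2)})$ with its known basis $\{[v_{2,j}]:1\leqslant j\leqslant 20\}$, and read off linear constraints on the $\gamma_t$. The paper does precisely this, first with the relations $p_{(i;j)}(\mathcal S)\equiv 0$ for $1\leqslant i<j\leqslant 5$ (giving the system (3.1)) and then with $p_{(1;(2,3))}$ and $p_{(1;(2,4))}$ (giving (3.2)), and these alone already force $\gamma_t=0$ for all $t$. So the maps with $\ell(I)\in\{1,2\}$ suffice; note, though, that your justification ``by degree reasons'' for discarding $\ell(I)\geqslant 3$ is not a real argument --- those maps are perfectly well defined in this degree (and are in fact used in the cases $d=3,4$); they are simply not needed here.

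Your two fallback steps are where the proposal goes wrong. First, the projections killing a variable are the maps $p_{(q;\emptyset)}$, and since every $a_{2,t}$ lies in $P_5^+$ (each is divisible by $x_1x_2x_3x_4x_5$), they annihilate the entire relation and yield no information. Second, and more seriously, the final step --- showing a surviving nontrivial combination is congruent to a sum of strictly smaller monomials and ``contradicting admissibility'' --- is circular: at this point admissibility of the $a_{2,t}$ is not yet known; it is exactly the conclusion of Proposition 3.1.1, obtained by combining the spanning lemma (which only shows the admissible monomials are \emph{among} the $a_{2,t}$) with the present linear independence. One cannot invoke admissibility of the $a_{2,t}$ inside the proof of their independence. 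Fortunately neither fallback is needed, but as written your argument would not close if the $p_{(i;I)}$ computation left a residue, and of course the computation itself --- which is the entire content of the paper's proof --- is not carried out.
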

\begin{proof} Suppose there is a linear relation
\begin{equation*}\label{cts20} \mathcal S = \sum_{t=1}^{75}\gamma_ta_{2,t} \equiv 0, 
\end{equation*}
where $\gamma_t\in \mathbb F_2$ for all $t,\ 1 \leqslant t \leqslant 75$.

Let $J$ be a sequence of non-negative integers and $\gamma_j \in \mathbb F_2$ for $ j\in J$. Denote by $\gamma_J = \sum_{j\in J}\gamma_j \in \mathbb F_2.$ Based on Theorem \ref{dlsig}, for $(i;I) \in \mathcal N_5$, we explicitly compute $p_{(i;I)}(\mathcal S)$ in terms of $v_{2,j},\ 1 \leqslant j \leqslant 20$. By computing from the relations $p_{(i;j)}(\mathcal S) \equiv 0,\ 1 \leqslant i < j \leqslant 5$, we have 
\begin{equation}\label{cts21}
 \begin{cases}\gamma_t =0,\ t \in \mathbb J,\ \ \gamma_{55}  = \gamma_{56} = \gamma_{57}, \\
\gamma_t = \gamma_{4},\ t = 15, 26, 65, 68, 73,\\ 
\gamma_t = \gamma_{5},\ t = 16, 34, 64, 69, 74,\\
\gamma_t = \gamma_{6},\ t = 27, 36, 66, 67, 75,\\ 
\gamma_t = \gamma_{17},\ t = 29, 38, 70, 71, 72, \\
\gamma_{\{6,17,43,55\}} =  \gamma_{\{5,17,47,55\}} = 0,\\  
\gamma_{\{4,17,51,55\}} =  \gamma_{\{5,6,43,47\}} = 0,\\ 
\gamma_{\{4,6,43,51\}} =  \gamma_{\{4,5,47,51\}} =  0.
\end{cases}
\end{equation}
Here $\mathbb J= \{$1, 2, 3, 7, 8, 9, 10, 11, 12, 13, 14, 18, 19, 20, 21, 22, 23, 24, 25, 28, 30, 31, 32, 33, 35, 37, 39, 40, 41, 42, 44, 45, 46, 48, 49, 50, 52, 53, 54, 58, 59, 60, 61, 62, 63$\}.$ Then, computing from the relations $p_{(1;(2,3))}(\mathcal S) \equiv 0$ and $p_{(1;(2,4))}(\mathcal S) \equiv 0$ gives
\begin{equation}\label{cts23}
 \begin{cases}\gamma_t =0,\ t \in \{4, 5, 6, 43, 47, 55\},\\
\gamma_{\{4,5,47,51\}} =0.
\end{cases}
\end{equation}
By combining the relations (\ref{cts21}) and (\ref{cts23}), we obtain $\gamma_t =0$, $1 \leqslant t \leqslant 75$. The lemma is proved.
\end{proof}

From Propositions \ref{mdcm1}, \ref{mdbsug} and \ref{props2}, we have $\dim (QP_5)_{12} = 190.$

\subsection{\it The case $d=3$}\
\setcounter{equation}{0} 

\medskip
 For $d=3$, we have $4(2^{d}-1) = 28.$ Denote by $C$ the set of the following monomials:

\medskip
\centerline{\begin{tabular}{lll}
$x_1^{7}x_2^{9}x_3^{2}x_4^{3}x_5^{7}$& $x_1^{7}x_2^{9}x_3^{2}x_4^{7}x_5^{3}$& $x_1^{7}x_2^{9}x_3^{3}x_4^{2}x_5^{7}$\cr 
$x_1^{7}x_2^{9}x_3^{3}x_4^{7}x_5^{2}$& $x_1^{7}x_2^{9}x_3^{7}x_4^{2}x_5^{3}$ & $x_1^{7}x_2^{9}x_3^{7}x_4^{3}x_5^{2}$\cr 
$x_1^{3}x_2^{7}x_3^{11}x_4^{4}x_5^{3}$& $x_1^{7}x_2^{3}x_3^{11}x_4^{4}x_5^{3}$& $x_1^{7}x_2^{11}x_3^{3}x_4^{4}x_5^{3}$\cr 
$x_1^{7}x_2^{9}x_3^{3}x_4^{3}x_5^{6}$& $x_1^{7}x_2^{9}x_3^{3}x_4^{6}x_5^{3}$& $x_1^{3}x_2^{7}x_3^{7}x_4^{8}x_5^{3}$\cr 
$x_1^{3}x_2^{7}x_3^{8}x_4^{3}x_5^{7}$& $x_1^{3}x_2^{7}x_3^{8}x_4^{7}x_5^{3}$& $x_1^{7}x_2^{3}x_3^{7}x_4^{8}x_5^{3}$\cr 
$x_1^{7}x_2^{3}x_3^{8}x_4^{3}x_5^{7}$& $x_1^{7}x_2^{3}x_3^{8}x_4^{7}x_5^{3}$& $x_1^{7}x_2^{7}x_3^{3}x_4^{8}x_5^{3}$\cr 
$x_1^{7}x_2^{7}x_3^{8}x_4^{3}x_5^{3}$.&&\cr
\end{tabular}}

\medskip
A direct computation shows that $\Phi^+(B_4(\bar\omega_{(5,3)}))\cup C$ is the set of $355$ monomials: $a_{3,t}$, $1\leqslant t \leqslant 355$ (see Section 4).

\begin{props}\label{props3} Under the above notations, the set $[\Phi^+(B_4(\bar\omega_{(5,3)}))\cup C]$ is a basis of the $\mathbb F_2$-vector space  $QP_5^+(\bar\omega_{(5,3)})$.
\end{props}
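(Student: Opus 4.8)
The plan is to follow exactly the three-lemma pattern used for the case $d=2$: first establish a finite list of strictly inadmissible monomials of weight vector $\bar\omega_{(5,3)}$, then use Theorem~\ref{dlcb1} together with the structure $x = X_j y^2$ (where $y$ ranges over $B_5(\bar\omega_{(5,2)})$, already known by Proposition~\ref{props2}) to show the proposed set spans $QP_5^+(\bar\omega_{(5,3)})$, and finally prove linear independence by applying the homomorphisms $p_{(i;I)}$ and reducing modulo the known basis $\{v_{3,j}\}$ of the $k=4$ situation.

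For the spanning step, I would argue as follows. Let $x$ be admissible with $\omega(x) = \bar\omega_{(5,3)}$. Since $4(2^3-1) = 28$ is even and the first entry of $\bar\omega_{(5,3)}$ is $4$, Lemma~\ref{bdday} (or the argument in its proof via Theorem~\ref{dlsig}) forces $\omega_1(x) = 4$, so $x = X_j z^2$ with $1 \le j \le 5$ and $z$ a monomial of degree $12$; by Theorem~\ref{dlcb1}(i), $z$ must be admissible, hence $z \in B_5(\bar\omega_{(5,2)})$, the $75$-element set from Proposition~\ref{props2} (together with the weight-$\omega_{(5,2)}$ part, but the latter contributes nothing in $P_5^+$ after squaring and multiplying by $X_j$ in the relevant degree — this needs a small check). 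A finite case analysis over the pairs $(j,z)$ then shows that whenever $X_j z^2$ is not one of the listed monomials $a_{3,t}$, it factors as $w z_1^{2^u}$ for one of the strictly inadmissible $w$ from the new Lemma (with $u = \max\{s : \omega_s(w)>0\}$), so Theorem~\ref{dlcb1}(ii) makes it inadmissible. The strictly inadmissible monomials needed here should again be low-degree "local" patterns of the form $x_i^2 x_j \cdots$ whose inadmissibility is witnessed by a single explicit $Sq^1$ (and possibly $Sq^2$, $Sq^4$) identity, exactly as in Lemma~\ref{bd11}; identifying the \emph{complete} such list that accounts for all $355 = 255\cdot|B_4(\bar\omega_{(5,3)})|/\ldots$ versus the full span is the bookkeeping core of this step.

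For linear independence, suppose $\mathcal S = \sum_t \gamma_t a_{3,t} \equiv 0$. I would apply each $p_{(i;I)}$ for $(i;I)\in\mathcal N_5$ with $\ell(I)\le 1$ first: by Lemma~\ref{bdm1} each $p_{(i;I)}(\mathcal S)$ lies in $P_4(\bar\omega_{(5,3)})$, where $\{[v_{3,j}] : 1\le j\le 20\}$ is a basis (from the $k=4$ result quoted at the start of Section~\ref{s3}), so $p_{(i;I)}(\mathcal S)\equiv 0$ yields a large homogeneous linear system in the $\gamma_t$. As in the $d=2$ case this should already kill most coefficients and identify the rest in a few orbits; the remaining relations among the "symmetric" coefficients (those coming from monomials fixed by several $p_{(i;I)}$, e.g. the three extra monomials appended to $\Phi^+(B_4(\bar\omega_{(5,3)}))$, analogous to $x_1^3x_2^4x_3x_4x_5^3$ etc.) are then resolved by applying $p_{(i;I)}$ with $\ell(I) = 2$, as was done with $p_{(1;(2,3))}$ and $p_{(1;(2,4))}$ for $d=2$. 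Combining all these systems gives $\gamma_t = 0$ for all $t$.

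The main obstacle is purely computational scale: with $355$ monomials the image computations $p_{(i;I)}(\mathcal S)$ in terms of the $v_{3,j}$ are far larger than the $75$-monomial case, and one must be careful that the "straightening" of each $p_{(i;I)}(a_{3,t})$ to a combination of admissible $v_{3,j}$ (using Kameko's criterion, Theorem~\ref{dlsig}, and the explicit $\mathcal A$-action) is done correctly, since a single sign or term error propagates. A secondary subtlety is verifying that the chosen generating set is \emph{exactly} right — i.e. that no listed $a_{3,t}$ is actually hit and no omitted monomial is admissible — which is why the spanning argument must pin down the full strictly-inadmissible list rather than an incomplete one; this is the step where the extra set $C$ (monomials not of the form $\phi_{(i;I)}(X^{2^d-1})$-type that nonetheless survive) enters, and its correctness is the delicate point. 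Once the $d=3$ (and symmetrically $d=4$) computations are carried out, combining Propositions~\ref{mdcm1}, \ref{mdbsug} and \ref{props3} gives $\dim(QP_5)_{28} = 480$, matching the table.
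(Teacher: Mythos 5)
Your proposal follows essentially the same route as the paper: the same three-lemma pattern (a list of strictly inadmissible monomials established by explicit $Sq^1$, $Sq^2$ identities, spanning via the decomposition $x = X_j y^2$ with $y \in B_5(\bar\omega_{(5,2)})$ and Theorem~\ref{dlcb1}, and linear independence via the maps $p_{(i;I)}$ expressed in the basis $\{v_{3,j}\}$). The only minor simplification you could make is that since $\omega(x)=\bar\omega_{(5,3)}$ forces $\omega(y)=\bar\omega_{(5,2)}$ exactly, the parenthetical worry about the $\omega_{(5,2)}$ part of $B_5(12)$ is unnecessary.
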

 
We prove the proposition by proving some lemmas.
\begin{lems}\label{bd41} If $(i, j,\ell,m,n) $ is a permutation of $(1,2,3,4,5)$ such that $i<j$, then the monomial $ x_i^3x_j^4x_\ell^7x_m^7x_n^7$ is strictly inadmissible.
\end{lems}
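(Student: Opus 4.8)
The plan is to exhibit an explicit decomposition of $x_i^3x_j^4x_\ell^7x_m^7x_n^7$ as a sum of strictly smaller monomials plus an element of $\sum_{u=1}^{2^s-1}\mathrm{Im}\,Sq^u$, where $s = \max\{i : \omega_i(x) > 0\}$. First I would compute the weight vector of $x = x_i^3x_j^4x_\ell^7x_m^7x_n^7$ in degree $4(2^3-1)=28$: the exponents $3,4,7,7,7$ have binary expansions giving $\omega(x) = \bar\omega_{(5,3)} = (4,4,4,\ldots)$ adjusted appropriately, so $s=3$ and we are allowed to use $Sq^u$ for $1\le u\le 7$. The key is then a Cartan-formula identity: starting from $Sq^1$ and $Sq^2$ applied to suitable monomials, one rewrites the factor $x_i^3x_j^4$ (or $x_i^3x_j^4x_\ell^7$) while the factors $x_m^7x_n^7$ (being spikes, i.e. of the form $x^{2^a-1}$) ride along passively, and one checks that every monomial produced other than $x_i^3x_j^4x_\ell^7x_m^7x_n^7$ itself is strictly smaller in the order of Definition~\ref{defn3}.

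The cleanest route is to reduce to a known small case via Theorem~\ref{dlcb1}(ii). Observe that $x_i^3x_j^4 x_\ell^7 = x_i^3 x_\ell^3 (x_j x_\ell)^{2}\cdots$ — more precisely, I would try to write $x = w \cdot (x_n)^{2^s}$ or $x = w\cdot y^{2^s}$ where $w$ is a monomial in fewer ``active'' variables already known to be strictly inadmissible. Concretely, the factor $x_m^7 x_n^7 = x_m^7 x_n^7$ has all of its weight concentrated in the first three positions, so writing $x = w\, y^{2^{s'}}$ is awkward directly; instead the more robust approach is the direct one: produce the hit-element witness by hand. I would apply the Cartan formula to $Sq^2(x_i x_j^2 x_\ell^7 x_m^7 x_n^7)$ and to $Sq^1$ of appropriate degree-$27$ monomials, and combine them so that the ``unwanted'' top term cancels, leaving $x_i^3x_j^4x_\ell^7x_m^7x_n^7$ expressed via strictly smaller monomials. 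This mirrors exactly the pattern of Lemma~\ref{bd11}: there the identities for $x_i^2x_jx_\ell x_m x_n^3$ and $x_i^2 x_j x_\ell^3 x_m^3 x_n^3$ are proved by a single $Sq^1$ together with the Cartan expansion; here the analogous identity will involve $Sq^1$ (and possibly $Sq^2$) and the same bookkeeping.

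After establishing the identity, the remaining work is to verify the order condition: every monomial $y$ appearing on the right-hand side other than $x$ satisfies $y < x$. Because the identity only moves exponents ``down'' (replacing a power $2^a$-type bump by lower ones, or lowering $3 = 2^1+2^0$ to $1$ while raising another exponent by $2$), each such $y$ has either $\omega(y) < \omega(x)$ or, if $\omega(y)=\omega(x)$, a lexicographically smaller exponent vector $\sigma(y)$. I would check this position by position for each term produced by the Cartan formula — this is the routine but essential part. Finally, by the last sentence of the Preliminaries, strict inadmissibility follows, and combined with Theorem~\ref{dlcb1} this feeds into the spanning argument for $QP_5^+(\bar\omega_{(5,3)})$ exactly as Lemma~\ref{bd11} fed into Lemma~\ref{bd12}.

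The main obstacle I anticipate is getting the explicit $Sq^u$-combination right so that the top term cancels and no new \emph{large} monomial sneaks in: with five variables and exponents as large as $7$, the Cartan expansion of even $Sq^2$ on a degree-$26$ monomial has many terms, and one must be careful that the monomials with a surviving high power (e.g. an $8 = 2^3$ in some slot paired against a $2^2$ in another) are genuinely $<x$ rather than incomparable or larger. A secondary subtlety is confirming that $s=3$ (so that $Sq^1,\ldots,Sq^7$ are all permitted in the definition of strict inadmissibility) — this just requires reading off $\omega(x)$, but it must be stated. I expect the final identity to be short, of the same one- or two-line shape as in Lemma~\ref{bd11}.
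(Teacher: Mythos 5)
Your strategy is the right one and matches the paper's: exhibit an explicit identity writing $x=x_i^3x_j^4x_\ell^7x_m^7x_n^7$ as a sum of monomials that are strictly smaller in the order of Definition~\ref{defn3} plus terms in the images of $Sq^1$ and $Sq^2$, then invoke the definition of strict inadmissibility. But the proposal never actually produces that identity, and for a lemma of this kind the identity \emph{is} the proof; everything else (the value of $s$, the order check) is routine once the decomposition is on the page. You defer the only nontrivial content with ``I would apply the Cartan formula to \dots and combine them so that the unwanted top term cancels,'' which is a plan rather than an argument. Moreover, the one concrete candidate you name, $Sq^2(x_ix_j^2x_\ell^7x_m^7x_n^7)$, lives in the wrong degree: that monomial has degree $24$, so $Sq^2$ of it has degree $26$, not $28$. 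The paper's witness is
$Sq^1(x_i^3x_j^3x_\ell^7x_m^7x_n^7+x_i^2x_j^4x_\ell^7x_m^7x_n^7)+Sq^2(x_i^2x_j^3x_\ell^7x_m^7x_n^7)$,
whose expansion, after cancellation, leaves $x$ plus ten monomials each either of weight $(4,4,2,1)<(4,4,4)$ (those acquiring an exponent $8$ or $9$) or of the same weight with smaller exponent vector (e.g.\ $x_i^2x_j^5x_\ell^7x_m^7x_n^7$). Without producing such a combination and verifying the cancellation, the lemma is not proved.

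A secondary slip: you assert $\omega(x)=\bar\omega_{(5,3)}$, but the exponents $(3,4,7,7,7)$ give $\omega(x)=(4,4,4)=\omega_{(5,3)}$, not $\bar\omega_{(5,3)}=(4,4,2,1)$. This does not affect the conclusion $s=3$ (so $Sq^1,\dots,Sq^7$ are all permitted), but it matters for the order comparisons: several of the correction terms are smaller precisely because their weight drops to $(4,4,2,1)$, which you could not see if you started from the wrong weight for $x$.
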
 
\begin{proof} By a direct computation, we have
\begin{align*}
x_i^3x_j^4x_\ell^7x_m^7x_n^7&= x_i^2x_j^5x_\ell^7 x_m^7x_n^7 + x_i^2x_j^3x_\ell^9 x_m^7x_n^7\\
&\quad + x_i^2x_j^3x_\ell^7 x_m^9x_n^7 + x_i^2x_j^3x_\ell^7 x_m^7x_n^9\\
&\quad  + x_i^3x_j^3x_\ell^8 x_m^7x_n^7 + x_i^3x_j^3x_\ell^7 x_m^8x_n^7 \\
&\quad + x_i^3x_j^3x_\ell^7 x_m^7x_n^8 + x_i^2x_j^3x_\ell^8 x_m^8x_n^7\\
&\quad + x_i^2x_j^3x_\ell^8 x_m^7x_n^8   + x_i^2x_j^4x_\ell^7 x_m^8x_n^8\\
&\quad +  Sq^1(x_i^3x_j^3x_\ell^7 x_m^7x_n^7+ x_i^2x_j^4x_\ell^7 x_m^7x_n^7 )\\
&\quad + Sq^2(x_i^2x_j^3x_\ell^7 x_m^7x_n^7). 
\end{align*}
The lemma is proved.
\end{proof}

By a similar computation, one gets the following.
\begin{lems}\label{bd42} The following monomials are strictly inadmissible:

\medskip
\rm{(i)} $x_i^3x_j^4x_\ell^3x_m^3x_n^7,\  i < j <4,\ 2<\ell < m$, 

\rm{(ii)} $x_1^{7}x_2^{9}x_3^{6}x_4^{3}x_5^{3}$, $x_1^{7}x_2^{8}x_3^{3}x_4^{3}x_5^{7}$, $x_1^{7}x_2^{8}x_3^{3}x_4^{7}x_5^{3}$, $x_1^{7}x_2^{8}x_3^{7}x_4^{3}x_5^{3}$.

\medskip\noindent
Here $(i, j,\ell,m,n) $ is a permutation of $(1,2,3,4,5)$.
\end{lems}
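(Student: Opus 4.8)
The plan is to prove each assertion exactly as $x_i^3x_j^4x_\ell^7x_m^7x_n^7$ was handled in Lemma~\ref{bd41}: for the monomial $w$ in question, set $s=\max\{i:\omega_i(w)>0\}$ and exhibit an explicit identity
$$w=\sum_j y_j+\sum_{u=1}^{2^s-1}Sq^u(h_u),\qquad y_j<w\ \text{for all }j,$$
with suitable $h_u\in P_5$; by the definition of strict inadmissibility this is all that is needed. Each such identity is produced by the standard device already used in Lemma~\ref{bd41}: apply the Cartan formula to monomials obtained from $w$ by lowering one or two exponents (recall $Sq^1(x^{2a+1})=x^{2a+2}$, $Sq^1(x^{2a})=0$, $Sq^2(x^{2a+1})=\binom{2a+1}{2}x^{2a+3}$), collect terms, and check that the expansion returns $w$ together with monomials that are $<w$ in the order of Definition~\ref{defn3} — either because the weight vector has dropped, or because, at the same weight vector, the exponent vector is smaller. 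The essential subtlety is that $Sq^1(h)$ alone typically also yields a cross term whose exponent vector exceeds $\sigma(w)$; this is exactly why Lemma~\ref{bd41} added $Sq^2(x_i^2x_j^3x_\ell^7x_m^7x_n^7)$, whose Cartan expansion contains a matching copy of the offending monomial. The same $Sq^1$‑plus‑$Sq^2$ correction, supplemented where necessary by a $Sq^4$ term, is what disposes of all the error monomials here.

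For part (i), $w=x_i^3x_j^4x_\ell^3x_m^3x_n^7$ with $i<j<4$, $2<\ell<m$ has $\omega(w)=(4,4,2)$, so $s=3$ and $Sq^1,\dots,Sq^7$ are available. First I would write $w$ as $Sq^1(x_i^3x_j^3x_\ell^3x_m^3x_n^7)$ plus the four monomials obtained by raising one of the odd exponents by one; three of these have weight vector $(4,4,2)$ but strictly smaller exponent vector, while the fourth, $x_i^4x_j^3x_\ell^3x_m^3x_n^7$ — which is $>w$ precisely because $i<j$ — is cancelled by adding $Sq^2(x_i^2x_j^3x_\ell^3x_m^3x_n^7)$, whose Cartan expansion contributes that monomial together with monomials of weight vector $<(4,4,2)$ or of weight vector $(4,4,2)$ and smaller exponent vector. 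The hypotheses $i<j<4$, $2<\ell<m$ serve only to validate these order comparisons, so the resulting identity is structurally the same as the displayed formula in Lemma~\ref{bd41}.

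For part (ii) each of the four monomials $x_1^{7}x_2^{9}x_3^{6}x_4^{3}x_5^{3}$, $x_1^{7}x_2^{8}x_3^{3}x_4^{3}x_5^{7}$, $x_1^{7}x_2^{8}x_3^{3}x_4^{7}x_5^{3}$, $x_1^{7}x_2^{8}x_3^{7}x_4^{3}x_5^{3}$ has weight vector $(4,4,2,1)=\bar\omega_{(5,3)}$, which is also the weight vector of the minimal spike $x_1^{15}x_2^{7}x_3^{3}x_4^{3}$ of degree $28$ in $P_5$; hence Theorem~\ref{dlsig} cannot be invoked to kill any term and a genuine decomposition must be written out (one should also note that the obvious attempt to factor these as $w_0\,y^{2^s}$ with $w_0$ a shorter strictly inadmissible monomial and apply Theorem~\ref{dlcb1}(ii) fails, since the relevant factors turn out to be admissible). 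Here $s=4$, so $Sq^1,\dots,Sq^{15}$ are at our disposal. For the three monomials containing $x_2^{8}$ I would begin from $Sq^1$ applied to the monomial with $x_2^{8}$ replaced by $x_2^{7}$ (making all exponents odd), producing $w$ together with four monomials; the one raising $x_1^{7}$ to $x_1^{8}$ and any monomial whose weight vector jumps to $(4,4,4,\dots)$ must be removed by further $Sq^2$ (and, where they still interfere, $Sq^4$) corrections applied to suitably decremented monomials, after which the remaining terms are checked to be $<w$. For $x_1^{7}x_2^{9}x_3^{6}x_4^{3}x_5^{3}$ the factor $x_3^{6}=Sq^1(x_3^{5})$ suggests starting from $Sq^1(x_1^{7}x_2^{9}x_3^{5}x_4^{3}x_5^{3})$ and then cancelling the cross term $x_1^{7}x_2^{10}x_3^{5}x_4^{3}x_5^{3}$ (same weight vector as $w$, larger exponent vector) by a combination of $Sq^2$ and $Sq^4$ on decremented monomials.

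The main obstacle is purely organizational: for each of the five monomials one must choose the $h_u$ so that every unwanted piece in the various Cartan expansions cancels in pairs and every leftover monomial lands strictly below $w$. The delicate point — already present in Lemma~\ref{bd41} — is that several of these leftovers share the weight vector of $w$, so the comparison has to descend to exponent vectors and cannot be shortcut by Singer's criterion; the modest case analysis compressed into the phrase ``a similar computation'' is precisely the search for the correct $Sq^2$ and occasional $Sq^4$ corrections, and no idea beyond the explicit identity of Lemma~\ref{bd41} is required.
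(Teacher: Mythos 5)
Your proposal matches the paper's treatment: the paper offers no details for this lemma beyond the remark that it follows ``by a similar computation'' to Lemma~\ref{bd41}, and your outline --- explicit Cartan-formula identities built from $Sq^1$, $Sq^2$ and occasional $Sq^4$ corrections, with the leftover monomials compared to $w$ first by weight vector and then by exponent vector, and with the larger-weight cross terms cancelled exactly inside the $Sq^u$ expansions --- is precisely that computation, and is fully verified in part (i). The only slip is cosmetic: in part (i) the term $x_i^3x_j^3x_\ell^3x_m^3x_n^8$ has weight vector $(4,4,0,1)<(4,4,2)$ rather than equal weight and smaller exponent vector, but it is still $<w$, so the argument is unaffected.
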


\begin{lems}\label{bd43} The $\mathbb F_2$-vector space $QP_5^+(\bar\omega_{(5,3)})$ is spanned by the set 
$$[\Phi^+(B_4(\bar\omega_{(5,3)}))\cup C].$$
\end{lems}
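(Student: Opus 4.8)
The plan is to follow the pattern of the proof of Lemma \ref{bd12}, feeding in the description of $B_5(\bar\omega_{(5,2)})$ obtained in the case $d=2$. Recall that $\bar\omega_{(5,3)} = (4,4,2,1)$ and that $z = x_1^{15}x_2^{7}x_3^{3}x_4^{3}$ is the minimal spike of degree $28$ in $P_5$, with $\omega(z) = \bar\omega_{(5,3)}$. Let $x$ be an admissible monomial in $P_5^+$ with $\omega(x) = \bar\omega_{(5,3)}$. Since $28$ is even, $\omega_1(x) \in \{0,2,4\}$; if $\omega_1(x) \in \{0,2\}$ then $\omega(x) < \omega(z)$, so by Theorem \ref{dlsig} the monomial $x$ is hit, contradicting admissibility. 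Hence $\omega_1(x) = 4$ and $x = X_jy^2$ for some $j \in \mathbb N_5$ and some monomial $y$ of degree $12$ in $P_5$. Applying Theorem \ref{dlcb1}(i) with $r=1$ to $x = X_j\cdot y^{2}$ shows that $y$ is admissible, and comparing weight vectors (the presence of $X_j$ only affects the first entry) gives $\omega(y) = (4,2,1) = \bar\omega_{(5,2)}$; thus $y \in B_5(\bar\omega_{(5,2)})$.

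Next I would use the results of the case $d=2$: by Proposition \ref{mdbsug} the set $B_5^0(\bar\omega_{(5,2)}) = \Phi^0(B_4(\bar\omega_{(5,2)}))$ consists of $100$ explicit monomials, while by Proposition \ref{props2} the set $B_5^+(\bar\omega_{(5,2)})$ consists of the $75$ monomials $a_{2,t}$; so $B_5(\bar\omega_{(5,2)})$ is an explicitly known set of $175$ admissible monomials. For each $z \in B_5(\bar\omega_{(5,2)})$ and each $j$ with $\nu_j(z) > 0$ (so that $X_jz^2 \in P_5^+$), I would carry out the direct verification that either $X_jz^2$ equals one of the $355$ monomials of $\Phi^+(B_4(\bar\omega_{(5,3)}))\cup C$, or else there exist a monomial $w$ among the strictly inadmissible monomials of Lemmas \ref{bd41} and \ref{bd42} (and, where needed, a few further strictly inadmissible monomials produced by the same kind of explicit $Sq^{1}$/$Sq^{2}$-computation), a monomial $z_1 \in P_5$, and $u = \max\{s \in \mathbb Z : \omega_s(w) > 0\}$, with $X_jz^2 = w\,z_1^{2^{u}}$. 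In the second case Theorem \ref{dlcb1}(ii) shows that $X_jz^2$ is strictly inadmissible, hence inadmissible. Since $x = X_jy^2$ is admissible with $y \in B_5(\bar\omega_{(5,2)})$, only the first alternative can occur, so $x = a_{3,t}$ for some $t$, and therefore $QP_5^+(\bar\omega_{(5,3)})$ is spanned by $[\Phi^+(B_4(\bar\omega_{(5,3)}))\cup C]$.

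The main obstacle is the bookkeeping in the second paragraph: there are $175$ admissible degree-$12$ monomials and up to five choices of $j$, so several hundred products $X_jz^2$ must each be sorted into ``listed'' or ``reducible to a strictly inadmissible $w$''. The delicate point is to check that, after being multiplied by a suitable $2^{u}$-th power (as permitted by Theorem \ref{dlcb1}(ii)), the strictly inadmissible monomials of Lemmas \ref{bd41} and \ref{bd42} really do cover every product that is not already on the list; concretely one must recognize each such product as a permutation of $x_i^3x_j^4x_\ell^7x_m^7x_n^7$, or of $x_i^3x_j^4x_\ell^3x_m^3x_n^7$ times an eighth power, or one of the four sporadic monomials of Lemma \ref{bd42}(ii) times a square, and read off inadmissibility. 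I expect no conceptual difficulty beyond organizing this routine but lengthy verification.
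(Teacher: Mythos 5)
Your proposal is correct and follows essentially the same route as the paper: write an admissible $x$ with $\omega(x)=\bar\omega_{(5,3)}$ as $X_jy^2$, use Theorem \ref{dlcb1} to conclude $y\in B_5(\bar\omega_{(5,2)})$ (whose elements are known from the case $d=2$), and then check directly that every product $X_jz^2$ not in the list factors as $wz_1^{2^r}$ for a strictly inadmissible $w$. The only detail worth noting is that the ``few further strictly inadmissible monomials'' you anticipate are exactly the monomials $x_i^2x_jx_\ell^3x_m^3x_n^3$ of Lemma \ref{bd11}(ii), which the paper reuses (raised to suitable $2^r$-th-power cofactors) alongside Lemmas \ref{bd41} and \ref{bd42}.
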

\begin{proof} Let $x$ be an admissible monomial in $P_5$ such that $\omega(x) = \bar\omega_{(5,3)}$. Then,  $\omega_1(x) = 4$ and $x = X_jy^2$ with $1\leqslant j \leqslant 5$ and $y$ a monomial of degree 12 in $P_5$. Since $x$ is admissible, according to Theorem \ref{dlcb1}, $y \in B_5(\bar\omega_{(5,2)})$. 

A direct computation shows that if $z \in B_5(\bar\omega_{(5,2)})$, $1\leqslant j \leqslant 5$ and $X_jz^2\ne a_{3,t}, \forall t, 1\leqslant t \leqslant 355$, then there exists a monomial $w$ which is given in one of  Lemmas \ref{bd11}(ii), \ref{bd41}, \ref{bd42} such that $X_jz^2= wz_1^{2^{r}}$ with a monomial $z_1 \in P_5$, and $r = \max\{s \in \mathbb Z : \omega_s(w) >0\}$. By Theorem \ref{dlcb1}, $X_jz^2$ is inadmissible. Since $x = X_jy^2$ with $y \in B_5(\bar\omega_{(5,2)})$ and $x$ is admissible, one can see that  $x =  a_{3,t}$ for suitable $t$. The lemma follows.
\end{proof}
\begin{lems}\label{bd44} The set  $[\Phi^+(B_4(\bar\omega_{(5,3)}))\cup C]$ is linearly independent in $QP_5^+(\bar\omega_{(5,3)})$.
\end{lems}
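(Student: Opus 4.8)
The plan is to mimic the proof of Lemma 3.1.7 (the $d=2$ case), which is the template for all the linear-independence lemmas in this paper. Suppose we have a linear relation $\mathcal S = \sum_{t=1}^{355}\gamma_t a_{3,t}\equiv 0$ with $\gamma_t\in\mathbb F_2$. The key tool is the family of $\mathcal A$-module homomorphisms $p_{(i;I)}\colon P_5\to P_4$ together with Lemma \ref{bdm1}, which guarantees $p_{(i;I)}$ carries $P_5(\bar\omega_{(5,3)})$ into $P_4(\bar\omega_{(5,3)})$ and so induces a map $QP_5(\bar\omega_{(5,3)})\to QP_4(\bar\omega_{(5,3)})$. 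Since by Proposition \ref{mdcm1} and the remark following the list of the $v_{d,t}$ the classes $[v_{3,j}]$, $1\leqslant j\leqslant 20$, form a basis of $QP_4(\bar\omega_{(5,3)})$, each relation $p_{(i;I)}(\mathcal S)\equiv 0$ expands into a system of linear equations in the $\gamma_t$ after rewriting $p_{(i;I)}(a_{3,t})$ as a sum of the $v_{3,j}$ modulo hit elements (using Theorem \ref{dlsig} to discard any terms of lower weight vector). First I would run through all the homomorphisms $p_{(i;j)}$ for $1\leqslant i<j\leqslant 5$ — this is the bulk of the work — and collect the resulting equations; by direct analogy with (\ref{cts21}) these should already force $\gamma_t=0$ for the overwhelming majority of indices $t$ and pin down the remaining ones to a small number of ``free'' parameters together with a handful of relations among sums $\gamma_{\{\ldots\}}$.

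The second stage is to apply the homomorphisms $p_{(i;I)}$ with $\ell(I)=2$ (and, if needed, $\ell(I)=3$) — the analogues of $p_{(1;(2,3))}$ and $p_{(1;(2,4))}$ in (\ref{cts23}) — to the residual relation in the still-undetermined coefficients. These carry strictly more ``mixing'' information because $p_{(i;I)}(x_i)=\sum_{s\in I}x_{s-1}$ is a genuine sum, so the monomials $a_{3,t}$ whose class survived the first stage (those built from $\phi_{(i;I)}$ with $I$ nonempty, together with the extra monomials in $C$) now contribute cross terms. Combining these new equations with the output of stage one should collapse everything to $\gamma_t=0$ for all $1\leqslant t\leqslant 355$. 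Throughout, the admissibility of each $a_{3,t}$ and Theorem \ref{dlcb1} are what justify that no $a_{3,t}$ is itself hit, so that the equivalence $\equiv$ genuinely detects a basis.

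The main obstacle is purely computational bookkeeping: with $355$ monomials and roughly $\binom 52+\binom 53+\cdots$ homomorphisms to evaluate, organizing the expansions of $p_{(i;I)}(a_{3,t})$ in terms of $v_{3,1},\ldots,v_{3,20}$ without error is delicate, especially because applying $p_{(i;I)}$ can produce monomials that are \emph{not} admissible in $P_4$ and must first be reduced modulo $\mathcal A^+P_4+P_4^-(\bar\omega_{(5,3)})$ using the strict-inadmissibility relations (Lemmas \ref{bd41}, \ref{bd42}) pulled back to four variables. A secondary subtlety is making sure that the particular choice of extra monomials $C$ (and the strict-inadmissibility lemmas used to discard the monomials \emph{not} in $\Phi^+(B_4(\bar\omega_{(5,3)}))\cup C$) is compatible: one must check that no $a_{3,t}$ lies in the span of the others through a relation invisible to all the $p_{(i;I)}$, which is why having the spanning set of exactly the right size $355 = 480-20-100-5$ matching the claimed dimension in the Main Theorem is the consistency check that closes the argument. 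I expect the verification that stages one and two together exhaust all the coefficients — i.e.\ that the linear system has only the trivial solution — to require careful case analysis but no new idea beyond those already used for $d=2$.
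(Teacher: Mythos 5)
Your proposal follows exactly the paper's own argument: assume a relation $\mathcal S=\sum_t\gamma_t a_{3,t}\equiv 0$, push it through the $\mathcal A$-module homomorphisms $p_{(i;I)}$ for $(i;I)\in\mathcal N_5$ with $0<\ell(I)\leqslant 3$, expand each image in the basis $[v_{3,j}]$, $1\leqslant j\leqslant 20$, of $QP_4(\bar\omega_{(5,3)})$ using Theorem \ref{dlsig} to discard lower-weight terms, and conclude $\gamma_t=0$ for all $t$ from the resulting linear system. The paper states this as a single direct computation without displaying the intermediate equations, so your two-stage organization (first $\ell(I)=1$, then $\ell(I)=2,3$) is the same method with the bookkeeping made explicit.
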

\begin{proof} Suppose there is a linear relation
$$ \mathcal S = \sum_{t=1}^{355}\gamma_ta_{3,t} \equiv 0, $$
where $\gamma_t \in \mathbb F_2$ for all $1 \leqslant t \leqslant 355$. Using Theorem \ref{dlsig}, we explicitly compute $ p_{(i;I)}(S)$, $(i;I)\in \mathcal N_5,$ in terms of $v_{3,j},\ 1 \leqslant j \leqslant 20$. By a direct computation from the relations $ p_{(i;I)}(S) \equiv 0$, $(i;I) \in \mathcal N_5$ with $0<\ell(I) \leqslant 3$, we obtain $\gamma_t =0$ for $1 \leqslant t \leqslant 355$.
\end{proof}

By using Propositions \ref{mdcm1}, \ref{mdbsug} and \ref{props3}, we get $\dim (QP_5)_{28} = 480.$  

\medskip
\subsection{\it The case $d=4$}\
\setcounter{equation}{0} 

\medskip
For $d=4$, we have $4(2^{d}-1) = 60.$ 
Denote by $D$ the set of the following monomials:

\medskip
\centerline{\begin{tabular}{lll}
$x_1^{7}x_2^{7}x_3^{15}x_4^{15}x_5^{16} $& $x_1^{7}x_2^{7}x_3^{15}x_4^{23}x_5^{8} $& $x_1^{7}x_2^{15}x_3^{7}x_4^{15}x_5^{16} $\cr 
$x_1^{7}x_2^{15}x_3^{7}x_4^{23}x_5^{8} $& $x_1^{7}x_2^{15}x_3^{15}x_4^{7}x_5^{16} $& $x_1^{7}x_2^{15}x_3^{15}x_4^{16}x_5^{7} $\cr 
$x_1^{7}x_2^{15}x_3^{15}x_4^{17}x_5^{6} $& $x_1^{7}x_2^{15}x_3^{17}x_4^{7}x_5^{14} $& $x_1^{7}x_2^{15}x_3^{23}x_4^{7}x_5^{8} $\cr 
$x_1^{7}x_2^{15}x_3^{23}x_4^{9}x_5^{6} $& $x_1^{15}x_2^{7}x_3^{7}x_4^{15}x_5^{16} $& $x_1^{15}x_2^{7}x_3^{7}x_4^{23}x_5^{8} $\cr  
$x_1^{15}x_2^{7}x_3^{15}x_4^{7}x_5^{16} $& $x_1^{15}x_2^{7}x_3^{15}x_4^{16}x_5^{7} $& $x_1^{15}x_2^{7}x_3^{15}x_4^{17}x_5^{6} $\cr 
$x_1^{15}x_2^{7}x_3^{17}x_4^{7}x_5^{14} $&  $x_1^{15}x_2^{7}x_3^{23}x_4^{7}x_5^{8} $& $x_1^{15}x_2^{7}x_3^{23}x_4^{9}x_5^{6} $\cr 
$x_1^{15}x_2^{15}x_3^{7}x_4^{7}x_5^{16} $& $x_1^{15}x_2^{15}x_3^{7}x_4^{16}x_5^{7} $& $x_1^{15}x_2^{15}x_3^{7}x_4^{17}x_5^{6} $\cr 
$x_1^{15}x_2^{15}x_3^{17}x_4^{6}x_5^{7} $& $x_1^{15}x_2^{15}x_3^{17}x_4^{7}x_5^{6} $& $x_1^{15}x_2^{19}x_3^{5}x_4^{7}x_5^{14} $\cr  
$x_1^{15}x_2^{19}x_3^{7}x_4^{5}x_5^{14} $& $x_1^{15}x_2^{19}x_3^{7}x_4^{13}x_5^{6} $& $x_1^{15}x_2^{23}x_3^{7}x_4^{7}x_5^{8} $\cr 
$x_1^{15}x_2^{23}x_3^{7}x_4^{9}x_5^{6} $.& &\cr  
\end{tabular}}

\medskip
A direct computation shows that $\Phi^+(B_4(\bar\omega_{(5,4)}))\cup D$ is the set of 520 monomials: $ a_{4,t},\ 1 \leqslant t \leqslant 520$ (see Section 4).

\begin{props}\label{prop4} Under the above notation, the set $[\Phi^+(B_4(\bar\omega_{(5,4)}))\cup D]$ is a basis of the $\mathbb F_2$-vector space $QP_5^+(\bar\omega_{(5,4)})$.
\end{props}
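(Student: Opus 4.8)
The plan is to follow exactly the template already used for the cases $d=2$ and $d=3$, namely: (1) show that the listed set spans $QP_5^+(\bar\omega_{(5,4)})$, and (2) show that it is linearly independent. For the spanning part, I would argue as in Lemma~\ref{bd43}: an admissible monomial $x$ with $\omega(x)=\bar\omega_{(5,4)}$ must have $\omega_1(x)=4$ (by Singer's criterion, Theorem~\ref{dlsig}, since the alternatives give $\omega(x)<\omega(z)$ for $z$ the minimal spike), so $x=X_jy^2$ with $y$ a monomial of degree $28$, and by Theorem~\ref{dlcb1}(i) the factor $y$ is admissible, hence $y\in B_5(\bar\omega_{(5,3)})$. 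Thus I need a list of ``forbidden'' strictly inadmissible monomials $w$ (generalizing Lemmas~\ref{bd11}, \ref{bd41}, \ref{bd42}) so that, by Theorem~\ref{dlcb1}(ii) together with Theorem~\ref{dlcb1}(i), every $X_jz^2$ with $z\in B_5(\bar\omega_{(5,3)})$ that is not one of the $a_{4,t}$ contains some such $w$ as a ``prefix'' in the sense $X_jz^2=wz_1^{2^r}$, $r=\max\{s:\omega_s(w)>0\}$. The new strictly inadmissible monomials needed at this degree — analogues such as $x_i^3x_j^4x_\ell^7x_m^7x_n^{15}$, $x_i^7x_j^8x_\ell^7x_m^7x_n^7$-type monomials, and the specific sporadic monomials listed in $D$ fail to survive so they are not forbidden, but their ``neighbours'' are — will be verified by explicit $Sq^1,Sq^2,Sq^4$ expansions, exactly as in Lemmas~\ref{bd41} and \ref{bd42}.

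For linear independence I would mimic Lemma~\ref{bd44}: assume a relation $\mathcal S=\sum_{t=1}^{520}\gamma_t a_{4,t}\equiv 0$, apply the $\mathcal A$-module homomorphisms $p_{(i;I)}:QP_5(\bar\omega_{(5,4)})\to QP_4(\bar\omega_{(5,4)})$ of Lemma~\ref{bdm1} for all $(i;I)\in\mathcal N_5$ with $0<\ell(I)\le 3$, express each $p_{(i;I)}(\mathcal S)$ in terms of the known basis $\{v_{4,j}:1\le j\le 20\}$ of $QP_4(\bar\omega_{(5,4)})$ (using Theorem~\ref{dlsig} to discard hit terms), and read off a system of linear equations on the $\gamma_t$. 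As in the $d=2$ case this will first kill most coefficients and identify the remaining ones in a few orbits; then the homomorphisms $p_{(1;(2,3))}$, $p_{(1;(2,4))}$, etc., corresponding to $\ell(I)=2$, supply the extra relations that force the surviving $\gamma_t$ to vanish.

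The main obstacle is purely the bookkeeping scale: at $d=4$ we have $520$ monomials rather than $75$ or $355$, the polynomial $B_5(\bar\omega_{(5,3)})$ feeding the construction already has $355$ elements, and the number of homomorphisms $p_{(i;I)}$ with $0<\ell(I)\le 3$ is large, so each $p_{(i;I)}(\mathcal S)$ is a long sum that must be rewritten modulo $\mathcal A^+P_4+P_4^-(\bar\omega_{(5,4)})$ into the $v_{4,j}$ basis. The conceptual content is identical to $d=2,3$ — there is no new idea — but guaranteeing that the ``forbidden prefix'' list in the spanning step is \emph{complete} (i.e.\ really catches every $X_jz^2$ outside the claimed generating set) and that the resulting linear system in the independence step has only the trivial solution are the two points that require careful, machine-checkable computation. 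I would therefore organize the proof as one proposition with two auxiliary lemmas — a ``strictly inadmissible monomials'' lemma listing the new $w$'s with their $Sq^i$-expansions, and a ``spanning'' lemma — followed by the independence argument, and relegate the explicit $520$-monomial list and the explicit images $p_{(i;I)}(\mathcal S)$ to Section~4, exactly parallel to the treatment of the earlier cases.
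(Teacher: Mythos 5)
Your proposal is correct and follows essentially the same route as the paper: the authors prove Proposition~\ref{prop4} by exactly this two-step scheme, with a spanning lemma (Lemma~\ref{bd53}) that reduces to $x=X_jy^2$, $y\in B_5(\bar\omega_{(5,3)})$, and eliminates everything outside the list via the strictly inadmissible prefixes of Lemmas~\ref{bd11}(ii), \ref{bd41}, \ref{bd51}, \ref{bd52} (verified by explicit $Sq^1,Sq^2,Sq^4,Sq^8$ expansions modulo $P_5^-(\bar\omega_{(5,4)})$), followed by a linear-independence lemma (Lemma~\ref{bd54}) that applies the maps $p_{(i;I)}$, $\ell(I)>0$, and reads off the vanishing of all $\gamma_t$ from the expressions in the basis $\{v_{4,j}\}$. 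The only differences are in bookkeeping details (the precise list of new forbidden monomials), not in the method.
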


We prepare some lemmas for the proof of this proposition. The following lemma is proved by a direct computation.
\begin{lems}\label{bd51} If $(i, j,\ell,m,n) $ is a permutation of $(1,2,3,4,5)$ such that $ i < j < \ell < m$, then the monomial 
$x_i^7x_j^7x_\ell^8x_m^7x_n^{15}$ is strictly inadmissible.
\end{lems}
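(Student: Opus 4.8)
The plan is to prove strict inadmissibility of $x_i^7x_j^7x_\ell^8x_m^7x_n^{15}$ by exhibiting an explicit straightening: a sum of smaller monomials (in the order of Definition \ref{defn3}) plus a sum $\sum_{u=1}^{2^s-1}Sq^u(h_u)$, where $s = \max\{i : \omega_i(x) > 0\}$. First I would compute the weight vector of $w:=x_i^7x_j^7x_\ell^8x_m^7x_n^{15}$: the exponents in binary are $7=(1,1,1)$, $8=(0,0,0,1)$, $15=(1,1,1,1)$, so $\omega_1(w)=\omega_2(w)=\omega_3(w)=4$ (from the four odd exponents $7,7,7,15$ in positions $1$--$3$; note $\ell$ contributes nothing until bit $3$) — wait, more carefully $\omega_1(w)=$ (number of odd exponents) $=4$, $\omega_2(w)=4$, $\omega_3(w)=4$, $\omega_4(w)=2$ (from $8$ and $15$). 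So $s=4$ and the allowed Steenrod operations are $Sq^1,\dots,Sq^{15}$, i.e. $Sq^u$ for $1\le u\le 2^4-1$.

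The key step is to set up a Cartan-formula identity analogous to the ones used in Lemmas \ref{bd41} and \ref{bd51}. The standard device is to start from a monomial with a ``small'' power like $x_i^7x_j^7x_\ell^7x_m^7x_n^{15}$ or $x_i^6\cdots$ or $x_i^4\cdots$ and hit it with $Sq^1$, $Sq^2$, $Sq^4$ (and possibly $Sq^3,Sq^6,\dots$), using $Sq^{2^t}(x^{2^t-1})=x^{2^{t+1}-1}$ and the Cartan formula to produce $w$ as the ``leading'' term, while all other terms produced either are strictly smaller than $w$ in the order $<$ (because raising one exponent $7\mapsto 8$ or $15\mapsto 16$ while lowering another typically decreases $\sigma$ or $\omega$) or are themselves manifestly strictly inadmissible and can be straightened recursively. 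Concretely I would try
$$
Sq^4\big(x_i^7x_j^7x_\ell^4x_m^7x_n^{15}\big),\quad Sq^2\big(x_i^7x_j^7x_\ell^6x_m^7x_n^{15}\big),\quad Sq^1\big(x_i^7x_j^7x_\ell^7x_m^7x_n^{15}\big)
$$
(or a suitable combination), collect the Cartan expansions, and solve for $w$. The block $x_\ell^8$ is the only ``defect'' spot (all other exponents are $2^a-1$), so the operation must act on the $\ell$-variable; $Sq^4$ applied to $x_\ell^4$ gives $x_\ell^8$ plus lower cross-terms, matching the structure of Lemma \ref{bd51}'s companion lemmas.

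The main obstacle will be bookkeeping: ensuring that every term in the Cartan expansion other than $w$ itself is either strictly below $w$ in the order of Definition \ref{defn3}, or lies in $P_5^0$ (so contributes nothing to $QP_5^+$), or is one of the already-established strictly inadmissible monomials from Lemmas \ref{bd11}, \ref{bd41}, \ref{bd42}, \ref{bd51} — and then invoking Theorem \ref{dlcb1}(ii) if a tail factor of the form $y^{2^s}$ appears. Because the excerpt states ``The following lemma is proved by a direct computation,'' I expect the final writeup to consist of one displayed multi-line identity of the form
\begin{align*}
x_i^7x_j^7x_\ell^8x_m^7x_n^{15} &= (\text{sum of monomials }<x_i^7x_j^7x_\ell^8x_m^7x_n^{15})\\
&\quad + Sq^1(\cdots) + Sq^2(\cdots) + Sq^4(\cdots),
\end{align*}
followed by the remark that the identity exhibits $w$ as strictly inadmissible in the sense of the relevant definition. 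I would verify the identity by expanding both sides via the Cartan formula and checking the coefficients in $\mathbb F_2$ cancel for every monomial except those listed, paying special attention to the $Sq^2$ and $Sq^4$ cross-terms on the $x_\ell$ and $x_n$ factors. Once the identity is in hand, strict inadmissibility — and hence ordinary inadmissibility, by the remark after the definition — is immediate.
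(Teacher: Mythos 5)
Your setup is correct and your strategy is the same as the paper's: the weight vector of $w=x_i^7x_j^7x_\ell^8x_m^7x_n^{15}$ is $(4,4,4,2)$, so $s=4$ and one must write $w$ as a sum of monomials $y_j<w$ plus $\sum_{u=1}^{15}Sq^u(h_u)$; the paper's proof (left to the reader as ``a direct computation'') is precisely a displayed Cartan identity of the shape you describe, modelled on the one written out for Lemma \ref{bd52}. However, your proposal stops exactly where the actual proof begins: no identity is exhibited, and the single-operator candidates you name do not work as stated. For instance, $Sq^4\bigl(x_i^7x_j^7x_\ell^4x_m^7x_n^{15}\bigr)$ does contain $w$ (since $Sq^4(x_\ell^4)=x_\ell^8$ while $\binom{4}{1}\equiv\binom{4}{2}\equiv\binom{4}{3}\equiv 0$ mod $2$), but its Cartan expansion also contains terms such as $x_i^{11}x_j^7x_\ell^4x_m^7x_n^{15}$, obtained by moving the whole $Sq^4$ onto an exponent $7$. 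That term has the \emph{same} weight vector $(4,4,4,2)$ as $w$ and a strictly \emph{larger} exponent vector (its $i$-th entry is $11>7$, and the entries in positions preceding $i$ are unchanged), so it is $>w$ in the order of Definition \ref{defn3} and cannot be absorbed into the sum $\sum_j y_j$. Such terms must be cancelled by additional, carefully chosen summands inside the arguments of $Sq^1$, $Sq^2$, $Sq^4$ (and possibly $Sq^8$), exactly as in the multi-summand arguments of the displayed identities proving Lemmas \ref{bd41} and \ref{bd52}. Producing and verifying that combination is the entire content of the lemma, and it is missing from your writeup.

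A secondary slip: a residual monomial lying in $P_5^0$ is not automatically discardable. Strict inadmissibility is a statement about monomials in $P_5$, and the definition requires every monomial $y_j$ left over after subtracting the $Sq^u(h_u)$ to satisfy $y_j<w$; whether $y_j$ involves all five variables is irrelevant at this stage (the passage to $QP_5^+$ happens only later, in Proposition \ref{prop4}).
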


\begin{lems}\label{bd52} The following monomials are strictly inadmissible:

\medskip
\centerline{\begin{tabular}{lll}
$x_1^{7}x_2^{7}x_3^{15}x_4^{16}x_5^{15}$& $x_1^{7}x_2^{15}x_3^{7}x_4^{16}x_5^{15}$& $x_1^{7}x_2^{15}x_3^{16}x_4^{7}x_5^{15}$\cr 
$x_1^{7}x_2^{15}x_3^{16}x_4^{15}x_5^{7}$&$x_1^{7}x_2^{15}x_3^{17}x_4^{6}x_5^{15}$& $x_1^{7}x_2^{15}x_3^{17}x_4^{14}x_5^{7}$\cr 
$x_1^{7}x_2^{15}x_3^{17}x_4^{15}x_5^{6}$& $x_1^{15}x_2^{7}x_3^{7}x_4^{16}x_5^{15}$& $x_1^{15}x_2^{7}x_3^{16}x_4^{7}x_5^{15}$\cr 
$x_1^{15}x_2^{7}x_3^{16}x_4^{15}x_5^{7}$& $x_1^{15}x_2^{7}x_3^{17}x_4^{6}x_5^{15}$& $x_1^{15}x_2^{7}x_3^{17}x_4^{14}x_5^{7}$\cr 
$x_1^{15}x_2^{7}x_3^{17}x_4^{15}x_5^{6}$& $x_1^{15}x_2^{15}x_3^{16}x_4^{7}x_5^{7}$& $x_1^{15}x_2^{19}x_3^{5}x_4^{6}x_5^{15}$\cr 
$x_1^{15}x_2^{19}x_3^{5}x_4^{14}x_5^{7}$& $x_1^{15}x_2^{19}x_3^{5}x_4^{15}x_5^{6}$& $x_1^{15}x_2^{19}x_3^{15}x_4^{5}x_5^{6}$.\cr 
\end{tabular}}
\end{lems}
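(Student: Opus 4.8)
The plan is to prove Lemma~\ref{bd52} exactly as Lemma~\ref{bd51} and Lemmas~\ref{bd11}, \ref{bd41}, \ref{bd42} were handled: for each of the eighteen listed monomials $w$, exhibit an explicit identity
$$w = \sum_{j} y_j + \sum_{u=1}^{2^s-1} Sq^u(h_u),$$
where $s = \max\{i : \omega_i(w) > 0\}$, every $y_j$ satisfies $y_j < w$ in the order of Definition~\ref{defn3}, and the $h_u \in P_5$ are suitable polynomials. By the definition of strict inadmissibility this is all that is required. Since each monomial $w$ on the list has weight vector $\bar\omega_{(5,4)} = (4^{(3)}, 2, 1)$, we have $s = 5$, so the correction terms range over $Sq^1, Sq^2, Sq^3, Sq^4$ applied to degree-$58$, degree-$57$ polynomials respectively, and the identities will be mild generalizations of the $d=3$ formula in Lemma~\ref{bd41} with the exponents scaled up by the substitution $x_i \mapsto x_i^2$ combined with Theorem~\ref{dlcb1}(ii).

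First I would group the eighteen monomials by the ``pattern'' of their exponent vectors. For instance $x_1^7x_2^7x_3^{15}x_4^{16}x_5^{15}$, $x_1^7x_2^{15}x_3^7x_4^{16}x_5^{15}$, $x_1^7x_2^{15}x_3^{16}x_4^7x_5^{15}$, $x_1^7x_2^{15}x_3^{16}x_4^{15}x_5^7$, $x_1^{15}x_2^7x_3^7x_4^{16}x_5^{15}$, $x_1^{15}x_2^7x_3^{16}x_4^7x_5^{15}$, $x_1^{15}x_2^7x_3^{16}x_4^{15}x_5^7$, $x_1^{15}x_2^{15}x_3^{16}x_4^7x_5^7$ are all obtained from a monomial of the shape $x_a^{16}x_b^{15}x_c^{15}x_d^7x_e^7$ by permuting variables, so a single identity — of the form used to show $x_i^3x_j^4x_\ell^7x_m^7x_n^7$ is strictly inadmissible, namely $x^{16}$ playing the role of $x^4 = (x^2)^2$ and the $x^{15}$, $x^7$ factors the roles of $x^7$, $x^3$ — will cover all of them after relabelling. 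The remaining monomials, those involving exponents $17$, $19$, $5$, $6$, $14$, split into a few further patterns (e.g. $x_a^{17}x_b^{15}x_c^{15}x_d^6x_e^7$, $x_a^{17}x_b^{15}x_c^7x_d^{14}x_e^?$, $x_a^{19}x_b^{15}x_c^5x_d^?x_e^?$), each handled by one template identity. The key mechanical point in each case is to verify via the Cartan formula that expanding the $Sq^u(h_u)$ terms produces $w$ plus a sum of monomials each of which is either smaller than $w$ or has strictly smaller weight vector (hence lies in $P_5^-(\bar\omega_{(5,4)})$, which is allowed).

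I would then record each template identity once, in the style of the displayed equations in the proofs of Lemmas~\ref{bd11} and \ref{bd41}, and note that the general case follows by permuting the indices $(i,j,\ell,m,n)$ and, where an exponent is even, by the ``doubling'' part of Theorem~\ref{dlcb1}: if $w' y^{2^s}$ with $w'$ strictly inadmissible in lower degree is the shape of $w$, then $w$ is strictly inadmissible. In particular several entries on the list are of the form $w'' \cdot (\text{square})$ where $w''$ already appears in Lemma~\ref{bd11}(ii) or Lemma~\ref{bd41} after scaling, so no new computation is needed for them at all — one simply cites Theorem~\ref{dlcb1}(ii). The main obstacle is purely bookkeeping: one must be careful that in every correction term $Sq^u(h_u)$ the Cartan expansion does not accidentally reproduce a monomial $\geq w$ other than $w$ itself, which forces a judicious choice of the $h_u$ (typically $h_u$ is $w$ with one exponent lowered by $u$ and possibly with an extra factor shifted), and one must check the order relation $y_j < w$ for each of the finitely many resulting monomials. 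Since each identity involves only a handful of terms and the verifications are entirely routine applications of the Cartan formula and Definition~\ref{defn3}, I would simply state: ``The lemma follows by a direct computation analogous to the proof of Lemma~\ref{bd41},'' after displaying one or two representative identities.
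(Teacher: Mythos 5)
Your overall strategy---exhibit, for each listed monomial $w$, an explicit identity $w=\sum_j y_j+\sum_u Sq^u(h_u)$ with $y_j<w$, verify it by the Cartan formula, and reduce the eighteen cases to a few templates---is the same route the paper takes (it writes out the identity for the representative monomial $x_1^{7}x_2^{7}x_3^{15}x_4^{16}x_5^{15}$ and declares the others similar). But there is a concrete error that would derail the execution: with $s=\max\{i:\omega_i(w)>0\}=5$, the definition of strict inadmissibility allows $Sq^u$ for $1\leqslant u\leqslant 2^s-1=31$, not for $1\leqslant u\leqslant 4$ as you assert (you appear to have read $2^s-1$ as $s-1$; note also that $Sq^u$ in degree $60$ acts on polynomials of degree $60-u$). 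This is not a harmless over-restriction: the paper's identity for the representative monomial uses $Sq^8$, and this is essentially forced, since exchanging the exponents $16$ and $15$ on $x_4,x_5$ modulo monomials of lower weight is a $\chi(Sq^8)$-type relation. Under your constraint $u\leqslant 4$ the identities you are looking for would not exist.

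Two smaller points. First, the order of Definition~\ref{defn3} compares exponent vectors lexicographically, so it is not invariant under permutations of the variables; a template identity for one monomial of a given ``pattern'' does not transfer to its permutations for free. That is why the eighteen monomials are listed individually rather than as a permutation-closed family with side conditions as in Lemmas~\ref{bd11}, \ref{bd41} and \ref{bd51}; you do acknowledge that the relations $y_j<w$ must be rechecked, but the claim that a single identity covers a whole pattern class after relabelling is too optimistic. Second, the hope that several entries need no computation because they factor as $w''\cdot(\text{square})$ with $w''$ covered by Lemma~\ref{bd11}(ii) or Lemma~\ref{bd41} is misplaced: those are exactly the products that the proof of Lemma~\ref{bd53} disposes of by citing Theorem~\ref{dlcb1}, and the eighteen monomials of Lemma~\ref{bd52} are the residual cases that do not factor this way (for instance $x_1^{7}x_2^{15}x_3^{17}x_4^{6}x_5^{15}$ reduces mod $4$ to $x_4^{2}x_3x_1^{3}x_2^{3}x_5^{3}$, which violates the condition $i<j$ of Lemma~\ref{bd11}(ii)).
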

\begin{proof} We prove the lemma for $x=x_1^{7}x_2^{7}x_3^{15}x_4^{16}x_5^{15}$. The others can be prove by a similar computation.

We have $\omega(x) = \bar\omega_{(5,4)}$.  By a direct computation using the Cartan formula, we have
\begin{align*}
x &= x_1^{7}x_2^{7}x_3^{15}x_4^{15}x_5^{16} + x_1^{7}x_2^{7}x_3^{8}x_4^{23}x_5^{15} + x_1^{7}x_2^{7}x_3^{8}x_4^{15}x_5^{23}\\ 
&\quad + x_1^{4}x_2^{7}x_3^{19}x_4^{15}x_5^{15} + x_1^{4}x_2^{7}x_3^{15}x_4^{19}x_5^{15} + x_1^{4}x_2^{7}x_3^{15}x_4^{15}x_5^{19}\\ 
&\quad + x_1^{7}x_2^{6}x_3^{17}x_4^{15}x_5^{15} + x_1^{7}x_2^{6}x_3^{15}x_4^{17}x_5^{15} + x_1^{7}x_2^{6}x_3^{15}x_4^{15}x_5^{17}\\ 
&\quad  + x_1^{5}x_2^{6}x_3^{19}x_4^{15}x_5^{15} + x_1^{5}x_2^{6}x_3^{15}x_4^{19}x_5^{15} + x_1^{5}x_2^{6}x_3^{15}x_4^{15}x_5^{19}\\ 
&\quad + Sq^1(x_1^{7}x_2^{7}x_3^{15}x_4^{15}x_5^{15} + x_1^{3}x_2^{11}x_3^{15}x_4^{15}x_5^{15})\\
&\quad  + Sq^2(x_1^{7}x_2^{6}x_3^{15}x_4^{15}x_5^{15} + x_1^{3}x_2^{10}x_3^{15}x_4^{15}x_5^{15})\\
&\quad +Sq^4(x_1^{4}x_2^{7}x_3^{15}x_4^{15}x_5^{15} + x_1^{5}x_2^{6}x_3^{15}x_4^{15}x_5^{15})
\\ &\quad + Sq^8(x_1^{7}x_2^{7}x_3^{8}x_4^{15}x_5^{15}) \ \  \text{mod}(P_5^-(\bar\omega_{(5,4)})).
\end{align*}
Hence, $x$ is strictly inadmissible.
\end{proof}
\begin{lems}\label{bd53} The $\mathbb F_2$-vector space $QP_5^+(\bar\omega_{(5,4)})$ is spanned by the set 
$$[\Phi^+(B_4(\bar\omega_{(5,4)}))\cup D].$$
\end{lems}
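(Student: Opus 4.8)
The plan is to mimic the structure of Lemma \ref{bd12} and Lemma \ref{bd43}, which handled the cases $d=2$ and $d=3$: reduce an arbitrary admissible monomial $x$ of weight $\bar\omega_{(5,4)}$ to the prescribed spanning set by peeling off a factor via Theorem \ref{dlcb1}. First I would observe that since $x$ is admissible of degree $60$ with $\omega(x)=\bar\omega_{(5,4)}$, we have $\omega_1(x)=4$, so $x=X_jy^2$ for some $1\leqslant j\leqslant 5$ and a monomial $y$ of degree $28$ in $P_5$; by Theorem \ref{dlcb1}(i) applied to the admissibility of $x$, the monomial $y$ must be admissible of degree $28$ with $\omega(y)=\bar\omega_{(5,3)}$, i.e.\ $y\in B_5(\bar\omega_{(5,3)})$. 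By Proposition \ref{props3}, that set of $355$ monomials is explicitly known.

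Next I would run the finite check: for each $z\in B_5(\bar\omega_{(5,3)})$ and each $j\in\{1,2,3,4,5\}$, form $X_jz^2$; if $X_jz^2=a_{4,t}$ for some $t$, there is nothing to prove, and otherwise one exhibits a strictly inadmissible monomial $w$ among those listed in Lemma \ref{bd11}(ii), Lemma \ref{bd41}, Lemma \ref{bd42}, Lemma \ref{bd51}, and Lemma \ref{bd52} such that $X_jz^2=w\,z_1^{2^{r}}$ with $z_1\in P_5$ a monomial and $r=\max\{s\in\mathbb Z:\omega_s(w)>0\}$. Then Theorem \ref{dlcb1}(ii) forces $X_jz^2$ to be inadmissible. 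Since $x=X_jy^2$ is admissible with $y\in B_5(\bar\omega_{(5,3)})$, this leaves $x=a_{4,t}$ for some $t$, which is exactly the assertion that $[\Phi^+(B_4(\bar\omega_{(5,4)}))\cup D]$ spans $QP_5^+(\bar\omega_{(5,4)})$ (the $P_5^0$ part being covered separately by Proposition \ref{mdbsug}).

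The main obstacle is the bookkeeping in that finite verification: one must confirm that the library of strictly inadmissible monomials assembled in Lemmas \ref{bd11}(ii), \ref{bd41}, \ref{bd42}, \ref{bd51}, \ref{bd52} is in fact \emph{sufficient} — that every product $X_jz^2$ not already on the list $\{a_{4,t}\}$ genuinely contains one of these $w$'s as the required initial-weight factor $w\,z_1^{2^r}$. This is the same type of case analysis carried out for $d=2$ and $d=3$, but larger in scale because $|B_5(\bar\omega_{(5,3)})|$ is bigger and the exponents ($15,16,17,19,23$) are more varied; in particular one has to be careful that the weight-vector prefixes match so that Theorem \ref{dlcb1}(ii) actually applies. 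A secondary point to dispatch is verifying that $D$ is disjoint from $\Phi^+(B_4(\bar\omega_{(5,4)}))$ and that the total count is $520$; this is a routine comparison of the explicit lists.

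Concretely, I would organize the proof as: (1) state the reduction $x=X_jy^2$ with $y\in B_5(\bar\omega_{(5,3)})$; (2) cite Lemmas \ref{bd11}(ii), \ref{bd41}, \ref{bd42}, \ref{bd51}, \ref{bd52} for the pool of strictly inadmissible witnesses; (3) assert, "by a direct computation," that any $X_jz^2\notin\{a_{4,t}\}$ splits as $w\,z_1^{2^r}$ for such a $w$; (4) invoke Theorem \ref{dlcb1}(ii) and conclude $X_jz^2$ inadmissible, hence $x=a_{4,t}$ for some $t$; (5) conclude that $[\Phi^+(B_4(\bar\omega_{(5,4)}))\cup D]$ spans $QP_5^+(\bar\omega_{(5,4)})$. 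This is exactly the template of Lemma \ref{bd43}, and I would write it at comparable brevity, relegating the enumeration of which $w$ handles which $X_jz^2$ to the phrase "a direct computation shows."
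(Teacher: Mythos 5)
Your proposal follows essentially the same route as the paper's proof: decompose $x=X_jy^2$, deduce $y\in B_5(\bar\omega_{(5,3)})$ via Theorem \ref{dlcb1}, and then check by direct computation that every $X_jz^2$ not among the $a_{4,t}$ factors as $wu^{2^r}$ for a strictly inadmissible $w$ from the accumulated lemmas (the paper cites Lemmas \ref{bd11}(ii), \ref{bd41}, \ref{bd51}, \ref{bd52}; your inclusion of Lemma \ref{bd42} in the pool is a harmless superset). The argument and its level of detail match the paper's.
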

\begin{proof} Let $x$ be an admissible monomial in $P_5$ such that  $\omega(x) = \bar \omega_{(5,4)}$. Then, $\omega_1(x) = 4$, and $x = X_jy^2$ with $1\leqslant j \leqslant 5$ and $y$ a monomial in $P_5$ such that  $\omega(y) = \bar \omega_{(5,3)}$. Since $x$ is admissible, according to Theorem \ref{dlcb1}, we have $y \in B_5(\bar \omega_{(5,3)})$. 

By a direct computation we can verify that for any $z \in B_5(\bar \omega_{(5,3)})$, $1\leqslant j \leqslant 5$, such that $X_jz^2\ne a_{4,t}, \forall t, \ 1\leqslant t \leqslant 520$, there is a monomial $w$ which is given in one of  Lemmas \ref{bd11}(ii), \ref{bd41}, \ref{bd51}, \ref{bd52} such that $X_jz^2= wu^{2^{r}}$ with a monomial $u \in P_5$, and $r = \max\{s \in \mathbb Z : \omega_s(w) >0\}$. By Theorem \ref{dlcb1}, $X_jz^2$ is inadmissible. Since $x = X_jy^2$ is admissible and $y \in B_5(\bar \omega_{(5,3)})$, one gets $x = a_{4,t}$ for some $t$. This proves the lemma.
\end{proof}
\begin{lems}\label{bd54} The set  $[\Phi^+(B_4(\bar\omega_{(5,4)}))\cup D]$ is linearly independent in the $\mathbb F_2$-vector space $QP_5^+(\bar\omega_{(5,4)})$.
\end{lems}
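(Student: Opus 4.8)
The plan is to establish linear independence of $[\Phi^+(B_4(\bar\omega_{(5,4)}))\cup D]$ by the same machinery used in Lemmas \ref{bd13} and \ref{bd44}, namely by applying the $\mathcal A$-module homomorphisms $p_{(i;I)}: P_5 \to P_4$ to a hypothetical linear relation and exploiting that $p_{(i;I)}$ carries $QP_5(\bar\omega_{(5,4)})$ into $QP_4(\bar\omega_{(5,4)}) = QP_4(4(2^4-1))$, whose structure is completely known: by the result of \cite{su5} quoted at the start of Section \ref{s3}, $[B_4(4(2^4-1))] = \{[v_{4,j}] : 1 \leqslant j \leqslant 21\}$ is a basis, and by Lemma \ref{bdday} together with Proposition \ref{mdcm1}, $[v_{4,21}]$ spans $QP_4(\omega_{(5,4)})$ while $\{[v_{4,j}] : 1 \leqslant j \leqslant 20\}$ is a basis of $QP_4(\bar\omega_{(5,4)})$. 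So each relation $p_{(i;I)}(\mathcal S) \equiv 0$ yields a system of $20$ linear equations over $\mathbb F_2$ in the coefficients $\gamma_t$.

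The concrete steps are as follows. First I would assume a relation $\mathcal S = \sum_{t=1}^{520}\gamma_t a_{4,t} \equiv 0$ with $\gamma_t \in \mathbb F_2$. Second, for each pair $1 \leqslant i < j \leqslant 5$ I would compute $p_{(i;j)}(\mathcal S)$, reduce the resulting polynomial in $P_4$ modulo $\mathcal A^+P_4$ using Theorem \ref{dlsig} to discard the terms of weight vector strictly below $\omega(v_{4,21})$ and rewrite the rest in terms of the admissible basis $\{v_{4,j}\}$, and set the coefficient of each $v_{4,j}$ equal to zero. As in the earlier cases, this large batch of relations should already force $\gamma_t = 0$ for all but a small exceptional set of indices, and pin down the surviving $\gamma_t$ in terms of a handful of ``free'' parameters while also producing several relations among sums $\gamma_J = \sum_{j\in J}\gamma_j$. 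Third, I would apply the homomorphisms $p_{(i;I)}$ with $\ell(I) = 2$ (and if needed $\ell(I) = 3,4$) to the still-unresolved part of $\mathcal S$; the key point is that for longer $I$ the map $p_{(i;I)}$ mixes the exceptional monomials nontrivially, so these additional relations break the remaining degrees of freedom. Combining all the relations then gives $\gamma_t = 0$ for $1 \leqslant t \leqslant 520$, completing the proof; together with Lemma \ref{bd53} this establishes Proposition \ref{prop4}, and then Propositions \ref{mdcm1}, \ref{mdbsug}, \ref{prop4} give $\dim(QP_5)_{60} = 650$.

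The main obstacle is purely the scale and bookkeeping of the computation: there are $520$ monomials $a_{4,t}$, the homomorphisms $p_{(i;I)}$ produce long sums in $P_4$, and each must be reduced to the admissible basis of $QP_4(4(2^4-1))$ via repeated application of the Cartan formula and the Kameko/Sum criteria (Theorems \ref{dlcb1}, \ref{dlsig}). Choosing the right small sub-collection of $p_{(i;I)}$'s — and the order in which to use the $\ell(I)=1$ relations before the $\ell(I)\geqslant 2$ ones — so that the linear system becomes triangular enough to solve by hand is where the genuine care lies; conceptually, however, there is no new idea beyond what already appears in the $d=2$ and $d=3$ cases, so I would present the argument in the same compressed style as Lemma \ref{bd44}, recording only the nontrivial output relations (the analogue of \eqref{cts21} and \eqref{cts23}) and asserting that their combination forces all $\gamma_t$ to vanish.
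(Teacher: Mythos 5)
Your proposal follows essentially the same route as the paper: assume a relation $\mathcal S = \sum_{t=1}^{520}\gamma_t a_{4,t} \equiv 0$, apply the homomorphisms $p_{(i;I)}$ for $(i;I)\in\mathcal N_5$ with $\ell(I)>0$, express the images in terms of the basis $\{[v_{4,j}] : 1\leqslant j\leqslant 20\}$ of $QP_4(\bar\omega_{(5,4)})$ using Theorem \ref{dlsig}, and solve the resulting linear system over $\mathbb F_2$ to force all $\gamma_t = 0$. The paper's proof is exactly this computation, stated in the same compressed style, so your plan is correct and matches it.
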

\begin{proof} Suppose there is a linear relation
$$ \mathcal S = \sum_{t=1}^{520}\gamma_ta_{4,t} \equiv 0, $$
where $\gamma_t \in \mathbb F_2$ for all $t,\ 1\leqslant t \leqslant 520$. By a direct computation using Theorem \ref{dlsig}, we express $p_{(i;I)}(\mathcal S)$ in terms of $v_{4,j}, 1 \leqslant j \leqslant 20$.  Computing directly from the relations 
$$ p_{(i,I)}(S) \equiv 0, \ \forall (i;I) \in \mathcal N_5,\ \ell(I)>0,$$
we get $\gamma_t =0$ for all $t$. The lemma is proved.
\end{proof}

By using Propositions \ref{mdcm1}, \ref{mdbsug} and \ref{prop4}, we get $\dim (QP_5)_{60} = 650.$ The Main Theorem is completely proved.

\section{The admissible monomials of degree $4(2^d-1)$ in $P_5$}\label{s4}

In this section, we list all the admissible monomials of degree $4(2^d-1)$ in $P_5$. Recall that
$$B_5(4(2^d-1)) = \Phi^0(B_4(4(2^d-1))) \cup B_5^+(\omega_{(5,d)}) \cup B_5^+(\bar\omega_{(5,d)}),$$
where $B_4(4(2^d-1)) = \{v_{d,j} : 1 \leqslant j \leqslant 21\}$ and 
$$B_5^+(\omega_{(5,d)}) = \{\phi_{(i;I)}(v_{d,21}) : (i;I) \in \mathcal N_5, 1\leqslant \ell(I) < \min\{5,d\}\}.$$
Set $b_d = |B_5^+(\bar\omega_{(5,d)})|$ and $B_5^+(\bar\omega_{(5,d)}) = \{a_{d,t}: 1 \leqslant t \leqslant b_d\}$. We have $b_1 = 0, b_2 = 75, b_3 = 355$ and $b_d = 520$ for $d \geqslant 4$. The admissible monomials $a_{d,t},\ 1 \leqslant t \leqslant b_d,$ are determined as follows:

\medskip
For $d \geqslant 2$,

\medskip
 \centerline{\begin{tabular}{llll}
1. & $x_1x_2^{2^{d-1}-1}x_3^{2^{d-1}-1}x_4^{2^d-2}x_5^{2^{d+1}-1} $& 2. & $x_1x_2^{2^{d-1}-1}x_3^{2^{d-1}-1}x_4^{2^d-1}x_5^{2^{d+1}-2} $\cr 
3. & $x_1x_2^{2^{d-1}-1}x_3^{2^{d-1}-1}x_4^{2^{d+1}-2}x_5^{2^d-1} $& 4. & $x_1x_2^{2^{d-1}-1}x_3^{2^{d-1}-1}x_4^{2^{d+1}-1}x_5^{2^d-2} $\cr 
5. & $x_1x_2^{2^{d-1}-1}x_3^{2^d-2}x_4^{2^{d-1}-1}x_5^{2^{d+1}-1} $& 6. & $x_1x_2^{2^{d-1}-1}x_3^{2^d-2}x_4^{2^d-1}x_5^{2^{d+1}-2^{d-1}-1} $\cr 
7. & $x_1x_2^{2^{d-1}-1}x_3^{2^d-2}x_4^{2^{d+1}-2^{d-1}-1}x_5^{2^d-1} $& 8. & $x_1x_2^{2^{d-1}-1}x_3^{2^d-2}x_4^{2^{d+1}-1}x_5^{2^{d-1}-1} $\cr 
9. & $x_1x_2^{2^{d-1}-1}x_3^{2^d-1}x_4^{2^{d-1}-1}x_5^{2^{d+1}-2} $& 10. & $x_1x_2^{2^{d-1}-1}x_3^{2^d-1}x_4^{2^d-2}x_5^{2^{d+1}-2^{d-1}-1} $\cr 
11. & $x_1x_2^{2^{d-1}-1}x_3^{2^d-1}x_4^{2^{d+1}-2^{d-1}-1}x_5^{2^d-2} $& 12. & $x_1x_2^{2^{d-1}-1}x_3^{2^d-1}x_4^{2^{d+1}-2}x_5^{2^{d-1}-1} $\cr 
 \end{tabular}}
 \centerline{\begin{tabular}{llll}
13. & $x_1x_2^{2^{d-1}-1}x_3^{2^{d+1}-2}x_4^{2^{d-1}-1}x_5^{2^d-1} $& 14. & $x_1x_2^{2^{d-1}-1}x_3^{2^{d+1}-2}x_4^{2^d-1}x_5^{2^{d-1}-1} $\cr 
15. & $x_1x_2^{2^{d-1}-1}x_3^{2^{d+1}-1}x_4^{2^{d-1}-1}x_5^{2^d-2} $& 16. & $x_1x_2^{2^{d-1}-1}x_3^{2^{d+1}-1}x_4^{2^d-2}x_5^{2^{d-1}-1} $\cr 
17. & $x_1x_2^{2^d-2}x_3^{2^{d-1}-1}x_4^{2^{d-1}-1}x_5^{2^{d+1}-1} $& 18. & $x_1x_2^{2^d-2}x_3^{2^{d-1}-1}x_4^{2^d-1}x_5^{2^{d+1}-2^{d-1}-1} $\cr 
19. & $x_1x_2^{2^d-2}x_3^{2^{d-1}-1}x_4^{2^{d+1}-2^{d-1}-1}x_5^{2^d-1} $& 20. & $x_1x_2^{2^d-2}x_3^{2^{d-1}-1}x_4^{2^{d+1}-1}x_5^{2^{d-1}-1} $\cr 
21. & $x_1x_2^{2^d-2}x_3^{2^d-1}x_4^{2^{d-1}-1}x_5^{2^{d+1}-2^{d-1}-1} $& 22. & $x_1x_2^{2^d-2}x_3^{2^d-1}x_4^{2^{d+1}-2^{d-1}-1}x_5^{2^{d-1}-1} $\cr 
23. & $x_1x_2^{2^d-2}x_3^{2^{d+1}-2^{d-1}-1}x_4^{2^{d-1}-1}x_5^{2^d-1} $& 24. & $x_1x_2^{2^d-2}x_3^{2^{d+1}-2^{d-1}-1}x_4^{2^d-1}x_5^{2^{d-1}-1} $\cr 
25. & $x_1x_2^{2^d-2}x_3^{2^{d+1}-1}x_4^{2^{d-1}-1}x_5^{2^{d-1}-1} $& 26. & $x_1x_2^{2^d-1}x_3^{2^{d-1}-1}x_4^{2^{d-1}-1}x_5^{2^{d+1}-2} $\cr 
27. & $x_1x_2^{2^d-1}x_3^{2^{d-1}-1}x_4^{2^d-2}x_5^{2^{d+1}-2^{d-1}-1} $& 28. & $x_1x_2^{2^d-1}x_3^{2^{d-1}-1}x_4^{2^{d+1}-2^{d-1}-1}x_5^{2^d-2} $\cr 
29. & $x_1x_2^{2^d-1}x_3^{2^{d-1}-1}x_4^{2^{d+1}-2}x_5^{2^{d-1}-1} $& 30. & $x_1x_2^{2^d-1}x_3^{2^d-2}x_4^{2^{d-1}-1}x_5^{2^{d+1}-2^{d-1}-1} $\cr 
31. & $x_1x_2^{2^d-1}x_3^{2^d-2}x_4^{2^{d+1}-2^{d-1}-1}x_5^{2^{d-1}-1} $& 32. & $x_1x_2^{2^d-1}x_3^{2^{d+1}-2^{d-1}-2}x_4^{2^{d-1}-1}x_5^{2^d-1} $\cr 
33. & $x_1x_2^{2^d-1}x_3^{2^{d+1}-2^{d-1}-2}x_4^{2^d-1}x_5^{2^{d-1}-1} $& 34. & $x_1x_2^{2^d-1}x_3^{2^{d+1}-2^{d-1}-1}x_4^{2^{d-1}-1}x_5^{2^d-2} $\cr 
35. & $x_1x_2^{2^d-1}x_3^{2^{d+1}-2^{d-1}-1}x_4^{2^d-2}x_5^{2^{d-1}-1} $& 36. & $x_1x_2^{2^d-1}x_3^{2^{d+1}-2}x_4^{2^{d-1}-1}x_5^{2^{d-1}-1} $\cr 
37. & $x_1x_2^{2^{d+1}-2}x_3^{2^{d-1}-1}x_4^{2^{d-1}-1}x_5^{2^d-1} $& 38. & $x_1x_2^{2^{d+1}-2}x_3^{2^{d-1}-1}x_4^{2^d-1}x_5^{2^{d-1}-1} $\cr 
39. & $x_1x_2^{2^{d+1}-2}x_3^{2^d-1}x_4^{2^{d-1}-1}x_5^{2^{d-1}-1} $& 40. & $x_1x_2^{2^{d+1}-1}x_3^{2^{d-1}-1}x_4^{2^{d-1}-1}x_5^{2^d-2} $\cr 
41. & $x_1x_2^{2^{d+1}-1}x_3^{2^{d-1}-1}x_4^{2^d-2}x_5^{2^{d-1}-1} $& 42. & $x_1x_2^{2^{d+1}-1}x_3^{2^d-2}x_4^{2^{d-1}-1}x_5^{2^{d-1}-1} $\cr 
43. & $x_1^{3}x_2^{2^{d+1}-3}x_3^{2^d-2}x_4^{2^{d-1}-1}x_5^{2^{d-1}-1} $& 44. & $x_1^{2^d-1}x_2x_3^{2^{d-1}-1}x_4^{2^{d-1}-1}x_5^{2^{d+1}-2} $\cr 
45. & $x_1^{2^d-1}x_2x_3^{2^{d-1}-1}x_4^{2^d-2}x_5^{2^{d+1}-2^{d-1}-1} $& 46. & $x_1^{2^d-1}x_2x_3^{2^{d-1}-1}x_4^{2^{d+1}-2^{d-1}-1}x_5^{2^d-2} $\cr 
47. & $x_1^{2^d-1}x_2x_3^{2^{d-1}-1}x_4^{2^{d+1}-2}x_5^{2^{d-1}-1} $& 48. & $x_1^{2^d-1}x_2x_3^{2^d-2}x_4^{2^{d-1}-1}x_5^{2^{d+1}-2^{d-1}-1} $\cr 
49. & $x_1^{2^d-1}x_2x_3^{2^d-2}x_4^{2^{d+1}-2^{d-1}-1}x_5^{2^{d-1}-1} $& 50. & $x_1^{2^d-1}x_2x_3^{2^{d+1}-2^{d-1}-1}x_4^{2^{d-1}-1}x_5^{2^d-2} $\cr 
51. & $x_1^{2^d-1}x_2x_3^{2^{d+1}-2^{d-1}-1}x_4^{2^d-2}x_5^{2^{d-1}-1} $& 52. & $x_1^{2^d-1}x_2x_3^{2^{d+1}-2}x_4^{2^{d-1}-1}x_5^{2^{d-1}-1} $\cr 
53. & $x_1^{2^d-1}x_2^{2^{d+1}-2^{d-1}-1}x_3x_4^{2^{d-1}-1}x_5^{2^d-2} $& 54. & $x_1^{2^d-1}x_2^{2^{d+1}-2^{d-1}-1}x_3x_4^{2^d-2}x_5^{2^{d-1}-1} $\cr 
55. & $x_1^{2^{d+1}-1}x_2x_3^{2^{d-1}-1}x_4^{2^{d-1}-1}x_5^{2^d-2} $& 56. & $x_1^{2^{d+1}-1}x_2x_3^{2^{d-1}-1}x_4^{2^d-2}x_5^{2^{d-1}-1} $\cr 
57. & $x_1^{2^{d+1}-1}x_2x_3^{2^d-2}x_4^{2^{d-1}-1}x_5^{2^{d-1}-1} $&&\cr
 \end{tabular}}

\medskip 
For $d=2$,

\medskip
 \centerline{\begin{tabular}{llll}
58. \ $x_1x_2x_3^{3}x_4^{3}x_5^{4} $& 59. \ $x_1x_2x_3^{3}x_4^{4}x_5^{3} $& 60. \ $x_1x_2^{3}x_3x_4^{3}x_5^{4} $& 61. \ $x_1x_2^{3}x_3x_4^{4}x_5^{3} $\cr  
62. \ $x_1x_2^{3}x_3^{3}x_4x_5^{4} $& 63. \ $x_1x_2^{3}x_3^{3}x_4^{4}x_5 $& 64. \ $x_1^{3}x_2x_3x_4^{3}x_5^{4} $& 65. \ $x_1^{3}x_2x_3x_4^{4}x_5^{3} $\cr  
66. \ $x_1^{3}x_2x_3^{3}x_4x_5^{4} $& 67. \ $x_1^{3}x_2x_3^{3}x_4^{4}x_5 $& 68. \ $x_1^{3}x_2x_3^{4}x_4x_5^{3} $& 69. \ $x_1^{3}x_2x_3^{4}x_4^{3}x_5 $\cr  
70. \ $x_1^{3}x_2^{3}x_3x_4x_5^{4} $& 71. \ $x_1^{3}x_2^{3}x_3x_4^{4}x_5 $& 72. \ $x_1^{3}x_2^{3}x_3^{4}x_4x_5 $& 73. \ $x_1^{3}x_2^{4}x_3x_4x_5^{3} $\cr  
74. \ $x_1^{3}x_2^{4}x_3x_4^{3}x_5 $& 75. \ $x_1^{3}x_2^{4}x_3^{3}x_4x_5 $& 
 \end{tabular}}

\medskip
For $d \geqslant 3$,

\medskip
 \centerline{\begin{tabular}{llll}
58. & $x_1x_2^{2^{d-1}-2}x_3^{2^{d-1}-1}x_4^{2^d-1}x_5^{2^{d+1}-1} $& 59. & $x_1x_2^{2^{d-1}-2}x_3^{2^{d-1}-1}x_4^{2^{d+1}-1}x_5^{2^d-1} $\cr 
60. & $x_1x_2^{2^{d-1}-2}x_3^{2^d-1}x_4^{2^{d-1}-1}x_5^{2^{d+1}-1} $& 61. & $x_1x_2^{2^{d-1}-2}x_3^{2^d-1}x_4^{2^d-1}x_5^{2^{d+1}-2^{d-1}-1} $\cr 
62. & $x_1x_2^{2^{d-1}-2}x_3^{2^d-1}x_4^{2^{d+1}-2^{d-1}-1}x_5^{2^d-1} $& 63. & $x_1x_2^{2^{d-1}-2}x_3^{2^d-1}x_4^{2^{d+1}-1}x_5^{2^{d-1}-1} $\cr 
64. & $x_1x_2^{2^{d-1}-2}x_3^{2^{d+1}-1}x_4^{2^{d-1}-1}x_5^{2^d-1} $& 65. & $x_1x_2^{2^{d-1}-2}x_3^{2^{d+1}-1}x_4^{2^d-1}x_5^{2^{d-1}-1} $\cr 
66. & $x_1x_2^{2^{d-1}-1}x_3^{2^{d-1}-2}x_4^{2^d-1}x_5^{2^{d+1}-1} $& 67. & $x_1x_2^{2^{d-1}-1}x_3^{2^{d-1}-2}x_4^{2^{d+1}-1}x_5^{2^d-1} $\cr 
68. & $x_1x_2^{2^{d-1}-1}x_3^{2^d-1}x_4^{2^{d-1}-2}x_5^{2^{d+1}-1} $& 69. & $x_1x_2^{2^{d-1}-1}x_3^{2^d-1}x_4^{2^d-1}x_5^{2^{d+1}-2^{d-1}-2} $\cr 
70. & $x_1x_2^{2^{d-1}-1}x_3^{2^d-1}x_4^{2^{d+1}-2^{d-1}-2}x_5^{2^d-1} $& 71. & $x_1x_2^{2^{d-1}-1}x_3^{2^d-1}x_4^{2^{d+1}-1}x_5^{2^{d-1}-2} $\cr 
72. & $x_1x_2^{2^{d-1}-1}x_3^{2^{d+1}-1}x_4^{2^{d-1}-2}x_5^{2^d-1} $& 73. & $x_1x_2^{2^{d-1}-1}x_3^{2^{d+1}-1}x_4^{2^d-1}x_5^{2^{d-1}-2} $\cr 
74. & $x_1x_2^{2^d-1}x_3^{2^{d-1}-2}x_4^{2^{d-1}-1}x_5^{2^{d+1}-1} $& 75. & $x_1x_2^{2^d-1}x_3^{2^{d-1}-2}x_4^{2^d-1}x_5^{2^{d+1}-2^{d-1}-1} $\cr 
76. & $x_1x_2^{2^d-1}x_3^{2^{d-1}-2}x_4^{2^{d+1}-2^{d-1}-1}x_5^{2^d-1} $& 77. & $x_1x_2^{2^d-1}x_3^{2^{d-1}-2}x_4^{2^{d+1}-1}x_5^{2^{d-1}-1} $\cr 
 \end{tabular}}
 \centerline{\begin{tabular}{llll}
78. & $x_1x_2^{2^d-1}x_3^{2^{d-1}-1}x_4^{2^{d-1}-2}x_5^{2^{d+1}-1} $& 79. & $x_1x_2^{2^d-1}x_3^{2^{d-1}-1}x_4^{2^d-1}x_5^{2^{d+1}-2^{d-1}-2} $\cr 
80. & $x_1x_2^{2^d-1}x_3^{2^{d-1}-1}x_4^{2^{d+1}-2^{d-1}-2}x_5^{2^d-1} $& 81. & $x_1x_2^{2^d-1}x_3^{2^{d-1}-1}x_4^{2^{d+1}-1}x_5^{2^{d-1}-2} $\cr 
82. & $x_1x_2^{2^d-1}x_3^{2^d-1}x_4^{2^{d-1}-2}x_5^{2^{d+1}-2^{d-1}-1} $& 83. & $x_1x_2^{2^d-1}x_3^{2^d-1}x_4^{2^{d-1}-1}x_5^{2^{d+1}-2^{d-1}-2} $\cr 
84. & $x_1x_2^{2^d-1}x_3^{2^d-1}x_4^{2^{d+1}-2^{d-1}-2}x_5^{2^{d-1}-1} $& 85. & $x_1x_2^{2^d-1}x_3^{2^d-1}x_4^{2^{d+1}-2^{d-1}-1}x_5^{2^{d-1}-2} $\cr 
86. & $x_1x_2^{2^d-1}x_3^{2^{d+1}-2^{d-1}-1}x_4^{2^{d-1}-2}x_5^{2^d-1} $& 87. & $x_1x_2^{2^d-1}x_3^{2^{d+1}-2^{d-1}-1}x_4^{2^d-1}x_5^{2^{d-1}-2} $\cr 
88. & $x_1x_2^{2^d-1}x_3^{2^{d+1}-1}x_4^{2^{d-1}-2}x_5^{2^{d-1}-1} $& 89. & $x_1x_2^{2^d-1}x_3^{2^{d+1}-1}x_4^{2^{d-1}-1}x_5^{2^{d-1}-2} $\cr 
90. & $x_1x_2^{2^{d+1}-1}x_3^{2^{d-1}-2}x_4^{2^{d-1}-1}x_5^{2^d-1} $& 91. & $x_1x_2^{2^{d+1}-1}x_3^{2^{d-1}-2}x_4^{2^d-1}x_5^{2^{d-1}-1} $\cr 
92. & $x_1x_2^{2^{d+1}-1}x_3^{2^{d-1}-1}x_4^{2^{d-1}-2}x_5^{2^d-1} $& 93. & $x_1x_2^{2^{d+1}-1}x_3^{2^{d-1}-1}x_4^{2^d-1}x_5^{2^{d-1}-2} $\cr 
94. & $x_1x_2^{2^{d+1}-1}x_3^{2^d-1}x_4^{2^{d-1}-2}x_5^{2^{d-1}-1} $& 95. & $x_1x_2^{2^{d+1}-1}x_3^{2^d-1}x_4^{2^{d-1}-1}x_5^{2^{d-1}-2} $\cr 
96. & $x_1^{3}x_2^{2^{d-1}-1}x_3^{2^{d-1}-1}x_4^{2^d-3}x_5^{2^{d+1}-2} $& 97. & $x_1^{3}x_2^{2^{d-1}-1}x_3^{2^{d-1}-1}x_4^{2^{d+1}-3}x_5^{2^d-2} $\cr 
98. & $x_1^{3}x_2^{2^{d-1}-1}x_3^{2^d-3}x_4^{2^{d-1}-2}x_5^{2^{d+1}-1} $& 99. & $x_1^{3}x_2^{2^{d-1}-1}x_3^{2^d-3}x_4^{2^{d-1}-1}x_5^{2^{d+1}-2} $\cr 
100. & $x_1^{3}x_2^{2^{d-1}-1}x_3^{2^d-3}x_4^{2^d-2}x_5^{2^{d+1}-2^{d-1}-1} $& 101. & $x_1^{3}x_2^{2^{d-1}-1}x_3^{2^d-3}x_4^{2^d-1}x_5^{2^{d+1}-2^{d-1}-2} $\cr 
102. & $x_1^{3}x_2^{2^{d-1}-1}x_3^{2^d-3}x_4^{2^{d+1}-2^{d-1}-2}x_5^{2^d-1} $& 103. & $x_1^{3}x_2^{2^{d-1}-1}x_3^{2^d-3}x_4^{2^{d+1}-2^{d-1}-1}x_5^{2^d-2} $\cr 
104. & $x_1^{3}x_2^{2^{d-1}-1}x_3^{2^d-3}x_4^{2^{d+1}-2}x_5^{2^{d-1}-1} $& 105. & $x_1^{3}x_2^{2^{d-1}-1}x_3^{2^d-3}x_4^{2^{d+1}-1}x_5^{2^{d-1}-2} $\cr 
106. & $x_1^{3}x_2^{2^{d-1}-1}x_3^{2^d-1}x_4^{2^d-3}x_5^{2^{d+1}-2^{d-1}-2} $& 107. & $x_1^{3}x_2^{2^{d-1}-1}x_3^{2^d-1}x_4^{2^{d+1}-2^{d-1}-3}x_5^{2^d-2} $\cr 
108. & $x_1^{3}x_2^{2^{d-1}-1}x_3^{2^d-1}x_4^{2^{d+1}-3}x_5^{2^{d-1}-2} $& 109. & $x_1^{3}x_2^{2^{d-1}-1}x_3^{2^{d+1}-3}x_4^{2^{d-1}-2}x_5^{2^d-1} $\cr 
110. & $x_1^{3}x_2^{2^{d-1}-1}x_3^{2^{d+1}-3}x_4^{2^{d-1}-1}x_5^{2^d-2} $& 111. & $x_1^{3}x_2^{2^{d-1}-1}x_3^{2^{d+1}-3}x_4^{2^d-2}x_5^{2^{d-1}-1} $\cr 
112. & $x_1^{3}x_2^{2^{d-1}-1}x_3^{2^{d+1}-3}x_4^{2^d-1}x_5^{2^{d-1}-2} $& 113. & $x_1^{3}x_2^{2^{d-1}-1}x_3^{2^{d+1}-1}x_4^{2^d-3}x_5^{2^{d-1}-2} $\cr 
114. & $x_1^{3}x_2^{2^d-3}x_3^{2^{d-1}-2}x_4^{2^{d-1}-1}x_5^{2^{d+1}-1} $& 115. & $x_1^{3}x_2^{2^d-3}x_3^{2^{d-1}-2}x_4^{2^d-1}x_5^{2^{d+1}-2^{d-1}-1} $\cr 
116. & $x_1^{3}x_2^{2^d-3}x_3^{2^{d-1}-2}x_4^{2^{d+1}-2^{d-1}-1}x_5^{2^d-1} $& 117. & $x_1^{3}x_2^{2^d-3}x_3^{2^{d-1}-2}x_4^{2^{d+1}-1}x_5^{2^{d-1}-1} $\cr 
118. & $x_1^{3}x_2^{2^d-3}x_3^{2^{d-1}-1}x_4^{2^{d-1}-2}x_5^{2^{d+1}-1} $& 119. & $x_1^{3}x_2^{2^d-3}x_3^{2^{d-1}-1}x_4^{2^{d-1}-1}x_5^{2^{d+1}-2} $\cr 
120. & $x_1^{3}x_2^{2^d-3}x_3^{2^{d-1}-1}x_4^{2^d-2}x_5^{2^{d+1}-2^{d-1}-1} $& 121. & $x_1^{3}x_2^{2^d-3}x_3^{2^{d-1}-1}x_4^{2^d-1}x_5^{2^{d+1}-2^{d-1}-2} $\cr 
122. & $x_1^{3}x_2^{2^d-3}x_3^{2^{d-1}-1}x_4^{2^{d+1}-2^{d-1}-2}x_5^{2^d-1} $& 123. & $x_1^{3}x_2^{2^d-3}x_3^{2^{d-1}-1}x_4^{2^{d+1}-2^{d-1}-1}x_5^{2^d-2} $\cr 
124. & $x_1^{3}x_2^{2^d-3}x_3^{2^{d-1}-1}x_4^{2^{d+1}-2}x_5^{2^{d-1}-1} $& 125. & $x_1^{3}x_2^{2^d-3}x_3^{2^{d-1}-1}x_4^{2^{d+1}-1}x_5^{2^{d-1}-2} $\cr 
126. & $x_1^{3}x_2^{2^d-3}x_3^{2^d-2}x_4^{2^{d-1}-1}x_5^{2^{d+1}-2^{d-1}-1} $& 127. & $x_1^{3}x_2^{2^d-3}x_3^{2^d-2}x_4^{2^{d+1}-2^{d-1}-1}x_5^{2^{d-1}-1} $\cr 
128. & $x_1^{3}x_2^{2^d-3}x_3^{2^d-1}x_4^{2^{d-1}-2}x_5^{2^{d+1}-2^{d-1}-1} $& 129. & $x_1^{3}x_2^{2^d-3}x_3^{2^d-1}x_4^{2^{d-1}-1}x_5^{2^{d+1}-2^{d-1}-2} $\cr 
130. & $x_1^{3}x_2^{2^d-3}x_3^{2^d-1}x_4^{2^{d+1}-2^{d-1}-2}x_5^{2^{d-1}-1} $& 131. & $x_1^{3}x_2^{2^d-3}x_3^{2^d-1}x_4^{2^{d+1}-2^{d-1}-1}x_5^{2^{d-1}-2} $\cr 
132. & $x_1^{3}x_2^{2^d-3}x_3^{2^{d+1}-2^{d-1}-2}x_4^{2^{d-1}-1}x_5^{2^d-1} $& 133. & $x_1^{3}x_2^{2^d-3}x_3^{2^{d+1}-2^{d-1}-2}x_4^{2^d-1}x_5^{2^{d-1}-1} $\cr 
134. & $x_1^{3}x_2^{2^d-3}x_3^{2^{d+1}-2^{d-1}-1}x_4^{2^{d-1}-2}x_5^{2^d-1} $& 135. & $x_1^{3}x_2^{2^d-3}x_3^{2^{d+1}-2^{d-1}-1}x_4^{2^{d-1}-1}x_5^{2^d-2} $\cr 
136. & $x_1^{3}x_2^{2^d-3}x_3^{2^{d+1}-2^{d-1}-1}x_4^{2^d-2}x_5^{2^{d-1}-1} $& 137. & $x_1^{3}x_2^{2^d-3}x_3^{2^{d+1}-2^{d-1}-1}x_4^{2^d-1}x_5^{2^{d-1}-2} $\cr 
138. & $x_1^{3}x_2^{2^d-3}x_3^{2^{d+1}-2}x_4^{2^{d-1}-1}x_5^{2^{d-1}-1} $& 139. & $x_1^{3}x_2^{2^d-3}x_3^{2^{d+1}-1}x_4^{2^{d-1}-2}x_5^{2^{d-1}-1} $\cr 
140. & $x_1^{3}x_2^{2^d-3}x_3^{2^{d+1}-1}x_4^{2^{d-1}-1}x_5^{2^{d-1}-2} $& 141. & $x_1^{3}x_2^{2^d-1}x_3^{2^{d-1}-1}x_4^{2^d-3}x_5^{2^{d+1}-2^{d-1}-2} $\cr 
142. & $x_1^{3}x_2^{2^d-1}x_3^{2^{d-1}-1}x_4^{2^{d+1}-2^{d-1}-3}x_5^{2^d-2} $& 143. & $x_1^{3}x_2^{2^d-1}x_3^{2^{d-1}-1}x_4^{2^{d+1}-3}x_5^{2^{d-1}-2} $\cr 
144. & $x_1^{3}x_2^{2^d-1}x_3^{2^d-3}x_4^{2^{d-1}-2}x_5^{2^{d+1}-2^{d-1}-1} $& 145. & $x_1^{3}x_2^{2^d-1}x_3^{2^d-3}x_4^{2^{d-1}-1}x_5^{2^{d+1}-2^{d-1}-2} $\cr 
146. & $x_1^{3}x_2^{2^d-1}x_3^{2^d-3}x_4^{2^{d+1}-2^{d-1}-2}x_5^{2^{d-1}-1} $& 147. & $x_1^{3}x_2^{2^d-1}x_3^{2^d-3}x_4^{2^{d+1}-2^{d-1}-1}x_5^{2^{d-1}-2} $\cr 
148. & $x_1^{3}x_2^{2^d-1}x_3^{2^d-1}x_4^{2^{d-1}-3}x_5^{2^{d+1}-2^{d-1}-2} $& 149. & $x_1^{3}x_2^{2^d-1}x_3^{2^d-1}x_4^{2^{d+1}-2^{d-1}-3}x_5^{2^{d-1}-2} $\cr 
150. & $x_1^{3}x_2^{2^d-1}x_3^{2^{d+1}-2^{d-1}-3}x_4^{2^{d-1}-2}x_5^{2^d-1} $& 151. & $x_1^{3}x_2^{2^d-1}x_3^{2^{d+1}-2^{d-1}-3}x_4^{2^{d-1}-1}x_5^{2^d-2} $\cr 
152. & $x_1^{3}x_2^{2^d-1}x_3^{2^{d+1}-2^{d-1}-3}x_4^{2^d-2}x_5^{2^{d-1}-1} $& 153. & $x_1^{3}x_2^{2^d-1}x_3^{2^{d+1}-2^{d-1}-3}x_4^{2^d-1}x_5^{2^{d-1}-2} $\cr 
154. & $x_1^{3}x_2^{2^d-1}x_3^{2^{d+1}-2^{d-1}-1}x_4^{2^{d-1}-3}x_5^{2^d-2} $& 155. & $x_1^{3}x_2^{2^d-1}x_3^{2^{d+1}-2^{d-1}-1}x_4^{2^d-3}x_5^{2^{d-1}-2} $\cr 
156. & $x_1^{3}x_2^{2^d-1}x_3^{2^{d+1}-3}x_4^{2^{d-1}-2}x_5^{2^{d-1}-1} $& 157. & $x_1^{3}x_2^{2^d-1}x_3^{2^{d+1}-3}x_4^{2^{d-1}-1}x_5^{2^{d-1}-2} $\cr 
 \end{tabular}}
 \centerline{\begin{tabular}{llll}
158. & $x_1^{3}x_2^{2^{d+1}-3}x_3^{2^{d-1}-2}x_4^{2^{d-1}-1}x_5^{2^d-1} $& 159. & $x_1^{3}x_2^{2^{d+1}-3}x_3^{2^{d-1}-2}x_4^{2^d-1}x_5^{2^{d-1}-1} $\cr 
160. & $x_1^{3}x_2^{2^{d+1}-3}x_3^{2^{d-1}-1}x_4^{2^{d-1}-2}x_5^{2^d-1} $& 161. & $x_1^{3}x_2^{2^{d+1}-3}x_3^{2^{d-1}-1}x_4^{2^{d-1}-1}x_5^{2^d-2} $\cr 
162. & $x_1^{3}x_2^{2^{d+1}-3}x_3^{2^{d-1}-1}x_4^{2^d-2}x_5^{2^{d-1}-1} $& 163. & $x_1^{3}x_2^{2^{d+1}-3}x_3^{2^{d-1}-1}x_4^{2^d-1}x_5^{2^{d-1}-2} $\cr 
164. & $x_1^{3}x_2^{2^{d+1}-3}x_3^{2^d-1}x_4^{2^{d-1}-2}x_5^{2^{d-1}-1} $& 165. & $x_1^{3}x_2^{2^{d+1}-3}x_3^{2^d-1}x_4^{2^{d-1}-1}x_5^{2^{d-1}-2} $\cr 
166. & $x_1^{3}x_2^{2^{d+1}-1}x_3^{2^{d-1}-1}x_4^{2^d-3}x_5^{2^{d-1}-2} $& 167. & $x_1^{3}x_2^{2^{d+1}-1}x_3^{2^d-3}x_4^{2^{d-1}-2}x_5^{2^{d-1}-1} $\cr 
168. & $x_1^{3}x_2^{2^{d+1}-1}x_3^{2^d-3}x_4^{2^{d-1}-1}x_5^{2^{d-1}-2} $& 169. & $x_1^{7}x_2^{2^{d+1}-5}x_3^{2^d-3}x_4^{2^{d-1}-2}x_5^{2^{d-1}-1} $\cr 
170. & $x_1^{7}x_2^{2^{d+1}-5}x_3^{2^d-3}x_4^{2^{d-1}-1}x_5^{2^{d-1}-2} $& 171. & $x_1^{2^{d-1}-1}x_2x_3^{2^{d-1}-2}x_4^{2^d-1}x_5^{2^{d+1}-1} $\cr 
172. & $x_1^{2^{d-1}-1}x_2x_3^{2^{d-1}-2}x_4^{2^{d+1}-1}x_5^{2^d-1} $& 173. & $x_1^{2^{d-1}-1}x_2x_3^{2^{d-1}-1}x_4^{2^d-2}x_5^{2^{d+1}-1} $\cr 
174. & $x_1^{2^{d-1}-1}x_2x_3^{2^{d-1}-1}x_4^{2^d-1}x_5^{2^{d+1}-2} $& 175. & $x_1^{2^{d-1}-1}x_2x_3^{2^{d-1}-1}x_4^{2^{d+1}-2}x_5^{2^d-1} $\cr 
176. & $x_1^{2^{d-1}-1}x_2x_3^{2^{d-1}-1}x_4^{2^{d+1}-1}x_5^{2^d-2} $& 177. & $x_1^{2^{d-1}-1}x_2x_3^{2^d-2}x_4^{2^{d-1}-1}x_5^{2^{d+1}-1} $\cr 
178. & $x_1^{2^{d-1}-1}x_2x_3^{2^d-2}x_4^{2^d-1}x_5^{2^{d+1}-2^{d-1}-1} $& 179. & $x_1^{2^{d-1}-1}x_2x_3^{2^d-2}x_4^{2^{d+1}-2^{d-1}-1}x_5^{2^d-1} $\cr 
180. & $x_1^{2^{d-1}-1}x_2x_3^{2^d-2}x_4^{2^{d+1}-1}x_5^{2^{d-1}-1} $& 181. & $x_1^{2^{d-1}-1}x_2x_3^{2^d-1}x_4^{2^{d-1}-2}x_5^{2^{d+1}-1} $\cr 
182. & $x_1^{2^{d-1}-1}x_2x_3^{2^d-1}x_4^{2^{d-1}-1}x_5^{2^{d+1}-2} $& 183. & $x_1^{2^{d-1}-1}x_2x_3^{2^d-1}x_4^{2^d-2}x_5^{2^{d+1}-2^{d-1}-1} $\cr 
184. & $x_1^{2^{d-1}-1}x_2x_3^{2^d-1}x_4^{2^d-1}x_5^{2^{d+1}-2^{d-1}-2} $& 185. & $x_1^{2^{d-1}-1}x_2x_3^{2^d-1}x_4^{2^{d+1}-2^{d-1}-2}x_5^{2^d-1} $\cr 
186. & $x_1^{2^{d-1}-1}x_2x_3^{2^d-1}x_4^{2^{d+1}-2^{d-1}-1}x_5^{2^d-2} $& 187. & $x_1^{2^{d-1}-1}x_2x_3^{2^d-1}x_4^{2^{d+1}-2}x_5^{2^{d-1}-1} $\cr 
188. & $x_1^{2^{d-1}-1}x_2x_3^{2^d-1}x_4^{2^{d+1}-1}x_5^{2^{d-1}-2} $& 189. & $x_1^{2^{d-1}-1}x_2x_3^{2^{d+1}-2}x_4^{2^{d-1}-1}x_5^{2^d-1} $\cr 
190. & $x_1^{2^{d-1}-1}x_2x_3^{2^{d+1}-2}x_4^{2^d-1}x_5^{2^{d-1}-1} $& 191. & $x_1^{2^{d-1}-1}x_2x_3^{2^{d+1}-1}x_4^{2^{d-1}-2}x_5^{2^d-1} $\cr 
192. & $x_1^{2^{d-1}-1}x_2x_3^{2^{d+1}-1}x_4^{2^{d-1}-1}x_5^{2^d-2} $& 193. & $x_1^{2^{d-1}-1}x_2x_3^{2^{d+1}-1}x_4^{2^d-2}x_5^{2^{d-1}-1} $\cr 
194. & $x_1^{2^{d-1}-1}x_2x_3^{2^{d+1}-1}x_4^{2^d-1}x_5^{2^{d-1}-2} $& 195. & $x_1^{2^{d-1}-1}x_2^{2^{d-1}-1}x_3x_4^{2^d-2}x_5^{2^{d+1}-1} $\cr 
196. & $x_1^{2^{d-1}-1}x_2^{2^{d-1}-1}x_3x_4^{2^d-1}x_5^{2^{d+1}-2} $& 197. & $x_1^{2^{d-1}-1}x_2^{2^{d-1}-1}x_3x_4^{2^{d+1}-2}x_5^{2^d-1} $\cr 
198. & $x_1^{2^{d-1}-1}x_2^{2^{d-1}-1}x_3x_4^{2^{d+1}-1}x_5^{2^d-2} $& 199. & $x_1^{2^{d-1}-1}x_2^{2^{d-1}-1}x_3^{2^d-1}x_4x_5^{2^{d+1}-2} $\cr 
200. & $x_1^{2^{d-1}-1}x_2^{2^{d-1}-1}x_3^{2^{d+1}-1}x_4x_5^{2^d-2} $& 201. & $x_1^{2^{d-1}-1}x_2^{2^d-1}x_3x_4^{2^{d-1}-2}x_5^{2^{d+1}-1} $\cr 
202. & $x_1^{2^{d-1}-1}x_2^{2^d-1}x_3x_4^{2^{d-1}-1}x_5^{2^{d+1}-2} $& 203. & $x_1^{2^{d-1}-1}x_2^{2^d-1}x_3x_4^{2^d-2}x_5^{2^{d+1}-2^{d-1}-1} $\cr 
204. & $x_1^{2^{d-1}-1}x_2^{2^d-1}x_3x_4^{2^d-1}x_5^{2^{d+1}-2^{d-1}-2} $& 205. & $x_1^{2^{d-1}-1}x_2^{2^d-1}x_3x_4^{2^{d+1}-2^{d-1}-2}x_5^{2^d-1} $\cr 
206. & $x_1^{2^{d-1}-1}x_2^{2^d-1}x_3x_4^{2^{d+1}-2^{d-1}-1}x_5^{2^d-2} $& 207. & $x_1^{2^{d-1}-1}x_2^{2^d-1}x_3x_4^{2^{d+1}-2}x_5^{2^{d-1}-1} $\cr 
208. & $x_1^{2^{d-1}-1}x_2^{2^d-1}x_3x_4^{2^{d+1}-1}x_5^{2^{d-1}-2} $& 209. & $x_1^{2^{d-1}-1}x_2^{2^d-1}x_3^{2^{d-1}-1}x_4x_5^{2^{d+1}-2} $\cr 
210. & $x_1^{2^{d-1}-1}x_2^{2^d-1}x_3^{2^d-1}x_4x_5^{2^{d+1}-2^{d-1}-2} $& 211. & $x_1^{2^{d-1}-1}x_2^{2^d-1}x_3^{2^{d+1}-2^{d-1}-1}x_4x_5^{2^d-2} $\cr 
212. & $x_1^{2^{d-1}-1}x_2^{2^d-1}x_3^{2^{d+1}-1}x_4x_5^{2^{d-1}-2} $& 213. & $x_1^{2^{d-1}-1}x_2^{2^{d+1}-1}x_3x_4^{2^{d-1}-2}x_5^{2^d-1} $\cr 
214. & $x_1^{2^{d-1}-1}x_2^{2^{d+1}-1}x_3x_4^{2^{d-1}-1}x_5^{2^d-2} $& 215. & $x_1^{2^{d-1}-1}x_2^{2^{d+1}-1}x_3x_4^{2^d-2}x_5^{2^{d-1}-1} $\cr 
216. & $x_1^{2^{d-1}-1}x_2^{2^{d+1}-1}x_3x_4^{2^d-1}x_5^{2^{d-1}-2} $& 217. & $x_1^{2^{d-1}-1}x_2^{2^{d+1}-1}x_3^{2^{d-1}-1}x_4x_5^{2^d-2} $\cr 
218. & $x_1^{2^{d-1}-1}x_2^{2^{d+1}-1}x_3^{2^d-1}x_4x_5^{2^{d-1}-2} $& 219. & $x_1^{2^d-1}x_2x_3^{2^{d-1}-2}x_4^{2^{d-1}-1}x_5^{2^{d+1}-1} $\cr 
220. & $x_1^{2^d-1}x_2x_3^{2^{d-1}-2}x_4^{2^d-1}x_5^{2^{d+1}-2^{d-1}-1} $& 221. & $x_1^{2^d-1}x_2x_3^{2^{d-1}-2}x_4^{2^{d+1}-2^{d-1}-1}x_5^{2^d-1} $\cr 
222. & $x_1^{2^d-1}x_2x_3^{2^{d-1}-2}x_4^{2^{d+1}-1}x_5^{2^{d-1}-1} $& 223. & $x_1^{2^d-1}x_2x_3^{2^{d-1}-1}x_4^{2^{d-1}-2}x_5^{2^{d+1}-1} $\cr 
224. & $x_1^{2^d-1}x_2x_3^{2^{d-1}-1}x_4^{2^d-1}x_5^{2^{d+1}-2^{d-1}-2} $& 225. & $x_1^{2^d-1}x_2x_3^{2^{d-1}-1}x_4^{2^{d+1}-2^{d-1}-2}x_5^{2^d-1} $\cr 
226. & $x_1^{2^d-1}x_2x_3^{2^{d-1}-1}x_4^{2^{d+1}-1}x_5^{2^{d-1}-2} $& 227. & $x_1^{2^d-1}x_2x_3^{2^d-1}x_4^{2^{d-1}-2}x_5^{2^{d+1}-2^{d-1}-1} $\cr 
228. & $x_1^{2^d-1}x_2x_3^{2^d-1}x_4^{2^{d-1}-1}x_5^{2^{d+1}-2^{d-1}-2} $& 229. & $x_1^{2^d-1}x_2x_3^{2^d-1}x_4^{2^{d+1}-2^{d-1}-2}x_5^{2^{d-1}-1} $\cr 
230. & $x_1^{2^d-1}x_2x_3^{2^d-1}x_4^{2^{d+1}-2^{d-1}-1}x_5^{2^{d-1}-2} $& 231. & $x_1^{2^d-1}x_2x_3^{2^{d+1}-2^{d-1}-2}x_4^{2^{d-1}-1}x_5^{2^d-1} $\cr 
232. & $x_1^{2^d-1}x_2x_3^{2^{d+1}-2^{d-1}-2}x_4^{2^d-1}x_5^{2^{d-1}-1} $& 233. & $x_1^{2^d-1}x_2x_3^{2^{d+1}-2^{d-1}-1}x_4^{2^{d-1}-2}x_5^{2^d-1} $\cr 
234. & $x_1^{2^d-1}x_2x_3^{2^{d+1}-2^{d-1}-1}x_4^{2^d-1}x_5^{2^{d-1}-2} $& 235. & $x_1^{2^d-1}x_2x_3^{2^{d+1}-1}x_4^{2^{d-1}-2}x_5^{2^{d-1}-1} $\cr 
236. & $x_1^{2^d-1}x_2x_3^{2^{d+1}-1}x_4^{2^{d-1}-1}x_5^{2^{d-1}-2} $& 237. & $x_1^{2^d-1}x_2^{3}x_3^{2^{d-1}-1}x_4^{2^d-3}x_5^{2^{d+1}-2^{d-1}-2} $\cr 
 \end{tabular}}
 \centerline{\begin{tabular}{llll}
238. & $x_1^{2^d-1}x_2^{3}x_3^{2^{d-1}-1}x_4^{2^{d+1}-2^{d-1}-3}x_5^{2^d-2} $& 239. & $x_1^{2^d-1}x_2^{3}x_3^{2^{d-1}-1}x_4^{2^{d+1}-3}x_5^{2^{d-1}-2} $\cr 
240. & $x_1^{2^d-1}x_2^{3}x_3^{2^d-3}x_4^{2^{d-1}-2}x_5^{2^{d+1}-2^{d-1}-1} $& 241. & $x_1^{2^d-1}x_2^{3}x_3^{2^d-3}x_4^{2^{d-1}-1}x_5^{2^{d+1}-2^{d-1}-2} $\cr 
242. & $x_1^{2^d-1}x_2^{3}x_3^{2^d-3}x_4^{2^{d+1}-2^{d-1}-2}x_5^{2^{d-1}-1} $& 243. & $x_1^{2^d-1}x_2^{3}x_3^{2^d-3}x_4^{2^{d+1}-2^{d-1}-1}x_5^{2^{d-1}-2} $\cr 
244. & $x_1^{2^d-1}x_2^{3}x_3^{2^d-1}x_4^{2^{d+1}-2^{d-1}-3}x_5^{2^{d-1}-2} $& 245. & $x_1^{2^d-1}x_2^{3}x_3^{2^{d+1}-2^{d-1}-3}x_4^{2^{d-1}-2}x_5^{2^d-1} $\cr 
246. & $x_1^{2^d-1}x_2^{3}x_3^{2^{d+1}-2^{d-1}-3}x_4^{2^{d-1}-1}x_5^{2^d-2} $& 247. & $x_1^{2^d-1}x_2^{3}x_3^{2^{d+1}-2^{d-1}-3}x_4^{2^d-2}x_5^{2^{d-1}-1} $\cr 
248. & $x_1^{2^d-1}x_2^{3}x_3^{2^{d+1}-2^{d-1}-3}x_4^{2^d-1}x_5^{2^{d-1}-2} $& 249. & $x_1^{2^d-1}x_2^{3}x_3^{2^{d+1}-2^{d-1}-1}x_4^{2^d-3}x_5^{2^{d-1}-2} $\cr 
250. & $x_1^{2^d-1}x_2^{3}x_3^{2^{d+1}-3}x_4^{2^{d-1}-2}x_5^{2^{d-1}-1} $& 251. & $x_1^{2^d-1}x_2^{3}x_3^{2^{d+1}-3}x_4^{2^{d-1}-1}x_5^{2^{d-1}-2} $\cr 
252. & $x_1^{2^d-1}x_2^{2^{d-1}-1}x_3x_4^{2^{d-1}-2}x_5^{2^{d+1}-1} $& 253. & $x_1^{2^d-1}x_2^{2^{d-1}-1}x_3x_4^{2^{d-1}-1}x_5^{2^{d+1}-2} $\cr 
254. & $x_1^{2^d-1}x_2^{2^{d-1}-1}x_3x_4^{2^d-2}x_5^{2^{d+1}-2^{d-1}-1} $& 255. & $x_1^{2^d-1}x_2^{2^{d-1}-1}x_3x_4^{2^d-1}x_5^{2^{d+1}-2^{d-1}-2} $\cr 
256. & $x_1^{2^d-1}x_2^{2^{d-1}-1}x_3x_4^{2^{d+1}-2^{d-1}-2}x_5^{2^d-1} $& 257. & $x_1^{2^d-1}x_2^{2^{d-1}-1}x_3x_4^{2^{d+1}-2^{d-1}-1}x_5^{2^d-2} $\cr 
258. & $x_1^{2^d-1}x_2^{2^{d-1}-1}x_3x_4^{2^{d+1}-2}x_5^{2^{d-1}-1} $& 259. & $x_1^{2^d-1}x_2^{2^{d-1}-1}x_3x_4^{2^{d+1}-1}x_5^{2^{d-1}-2} $\cr 
260. & $x_1^{2^d-1}x_2^{2^{d-1}-1}x_3^{2^{d-1}-1}x_4x_5^{2^{d+1}-2} $& 261. & $x_1^{2^d-1}x_2^{2^{d-1}-1}x_3^{2^d-1}x_4x_5^{2^{d+1}-2^{d-1}-2} $\cr 
262. & $x_1^{2^d-1}x_2^{2^{d-1}-1}x_3^{2^{d+1}-2^{d-1}-1}x_4x_5^{2^d-2} $& 263. & $x_1^{2^d-1}x_2^{2^{d-1}-1}x_3^{2^{d+1}-1}x_4x_5^{2^{d-1}-2} $\cr 
264. & $x_1^{2^d-1}x_2^{2^d-1}x_3x_4^{2^{d-1}-2}x_5^{2^{d+1}-2^{d-1}-1} $& 265. & $x_1^{2^d-1}x_2^{2^d-1}x_3x_4^{2^{d-1}-1}x_5^{2^{d+1}-2^{d-1}-2} $\cr 
266. & $x_1^{2^d-1}x_2^{2^d-1}x_3x_4^{2^{d+1}-2^{d-1}-2}x_5^{2^{d-1}-1} $& 267. & $x_1^{2^d-1}x_2^{2^d-1}x_3x_4^{2^{d+1}-2^{d-1}-1}x_5^{2^{d-1}-2} $\cr 
268. & $x_1^{2^d-1}x_2^{2^d-1}x_3^{3}x_4^{2^{d+1}-2^{d-1}-3}x_5^{2^{d-1}-2} $& 269. & $x_1^{2^d-1}x_2^{2^d-1}x_3^{2^{d-1}-1}x_4x_5^{2^{d+1}-2^{d-1}-2} $\cr 
270. & $x_1^{2^d-1}x_2^{2^d-1}x_3^{2^{d+1}-2^{d-1}-1}x_4x_5^{2^{d-1}-2} $& 271. & $x_1^{2^d-1}x_2^{2^{d+1}-2^{d-1}-1}x_3x_4^{2^{d-1}-2}x_5^{2^d-1} $\cr 
272. & $x_1^{2^d-1}x_2^{2^{d+1}-2^{d-1}-1}x_3x_4^{2^d-1}x_5^{2^{d-1}-2} $& 273. & $x_1^{2^d-1}x_2^{2^{d+1}-2^{d-1}-1}x_3^{3}x_4^{2^d-3}x_5^{2^{d-1}-2} $\cr 
274. & $x_1^{2^d-1}x_2^{2^{d+1}-2^{d-1}-1}x_3^{2^{d-1}-1}x_4x_5^{2^d-2} $& 275. & $x_1^{2^d-1}x_2^{2^{d+1}-2^{d-1}-1}x_3^{2^d-1}x_4x_5^{2^{d-1}-2} $\cr 
276. & $x_1^{2^d-1}x_2^{2^{d+1}-1}x_3x_4^{2^{d-1}-2}x_5^{2^{d-1}-1} $& 277. & $x_1^{2^d-1}x_2^{2^{d+1}-1}x_3x_4^{2^{d-1}-1}x_5^{2^{d-1}-2} $\cr 
278. & $x_1^{2^d-1}x_2^{2^{d+1}-1}x_3^{2^{d-1}-1}x_4x_5^{2^{d-1}-2} $& 279. & $x_1^{2^{d+1}-1}x_2x_3^{2^{d-1}-2}x_4^{2^{d-1}-1}x_5^{2^d-1} $\cr 
280. & $x_1^{2^{d+1}-1}x_2x_3^{2^{d-1}-2}x_4^{2^d-1}x_5^{2^{d-1}-1} $& 281. & $x_1^{2^{d+1}-1}x_2x_3^{2^{d-1}-1}x_4^{2^{d-1}-2}x_5^{2^d-1} $\cr 
282. & $x_1^{2^{d+1}-1}x_2x_3^{2^{d-1}-1}x_4^{2^d-1}x_5^{2^{d-1}-2} $& 283. & $x_1^{2^{d+1}-1}x_2x_3^{2^d-1}x_4^{2^{d-1}-2}x_5^{2^{d-1}-1} $\cr 
284. & $x_1^{2^{d+1}-1}x_2x_3^{2^d-1}x_4^{2^{d-1}-1}x_5^{2^{d-1}-2} $& 285. & $x_1^{2^{d+1}-1}x_2^{3}x_3^{2^{d-1}-1}x_4^{2^d-3}x_5^{2^{d-1}-2} $\cr 
286. & $x_1^{2^{d+1}-1}x_2^{3}x_3^{2^d-3}x_4^{2^{d-1}-2}x_5^{2^{d-1}-1} $& 287. & $x_1^{2^{d+1}-1}x_2^{3}x_3^{2^d-3}x_4^{2^{d-1}-1}x_5^{2^{d-1}-2} $\cr 
288. & $x_1^{2^{d+1}-1}x_2^{2^{d-1}-1}x_3x_4^{2^{d-1}-2}x_5^{2^d-1} $& 289. & $x_1^{2^{d+1}-1}x_2^{2^{d-1}-1}x_3x_4^{2^{d-1}-1}x_5^{2^d-2} $\cr 
290. & $x_1^{2^{d+1}-1}x_2^{2^{d-1}-1}x_3x_4^{2^d-2}x_5^{2^{d-1}-1} $& 291. & $x_1^{2^{d+1}-1}x_2^{2^{d-1}-1}x_3x_4^{2^d-1}x_5^{2^{d-1}-2} $\cr 
292. & $x_1^{2^{d+1}-1}x_2^{2^{d-1}-1}x_3^{2^{d-1}-1}x_4x_5^{2^d-2} $& 293. & $x_1^{2^{d+1}-1}x_2^{2^{d-1}-1}x_3^{2^d-1}x_4x_5^{2^{d-1}-2} $\cr 
294. & $x_1^{2^{d+1}-1}x_2^{2^d-1}x_3x_4^{2^{d-1}-2}x_5^{2^{d-1}-1} $& 295. & $x_1^{2^{d+1}-1}x_2^{2^d-1}x_3x_4^{2^{d-1}-1}x_5^{2^{d-1}-2} $\cr 
296. & $x_1^{2^{d+1}-1}x_2^{2^d-1}x_3^{2^{d-1}-1}x_4x_5^{2^{d-1}-2} $&
 \end{tabular}}

\medskip
For $d = 3$,

\medskip
 \centerline{\begin{tabular}{llll}
297. \ $x_1^{3}x_2^{3}x_3^{3}x_4^{4}x_5^{15} $& 298. \ $x_1^{3}x_2^{3}x_3^{3}x_4^{7}x_5^{12} $& 299. \ $x_1^{3}x_2^{3}x_3^{3}x_4^{12}x_5^{7} $& 300. \ $x_1^{3}x_2^{3}x_3^{3}x_4^{15}x_5^{4} $\cr  
301. \ $x_1^{3}x_2^{3}x_3^{4}x_4^{3}x_5^{15} $& 302. \ $x_1^{3}x_2^{3}x_3^{4}x_4^{7}x_5^{11} $& 303. \ $x_1^{3}x_2^{3}x_3^{4}x_4^{11}x_5^{7} $& 304. \ $x_1^{3}x_2^{3}x_3^{4}x_4^{15}x_5^{3} $\cr  
305. \ $x_1^{3}x_2^{3}x_3^{7}x_4^{3}x_5^{12} $& 306. \ $x_1^{3}x_2^{3}x_3^{7}x_4^{4}x_5^{11} $& 307. \ $x_1^{3}x_2^{3}x_3^{7}x_4^{7}x_5^{8} $& 308. \ $x_1^{3}x_2^{3}x_3^{7}x_4^{8}x_5^{7} $\cr  
309. \ $x_1^{3}x_2^{3}x_3^{7}x_4^{11}x_5^{4} $& 310. \ $x_1^{3}x_2^{3}x_3^{7}x_4^{12}x_5^{3} $& 311. \ $x_1^{3}x_2^{3}x_3^{12}x_4^{3}x_5^{7} $& 312. \ $x_1^{3}x_2^{3}x_3^{12}x_4^{7}x_5^{3} $\cr  
313. \ $x_1^{3}x_2^{3}x_3^{15}x_4^{3}x_5^{4} $& 314. \ $x_1^{3}x_2^{3}x_3^{15}x_4^{4}x_5^{3} $& 315. \ $x_1^{3}x_2^{7}x_3^{3}x_4^{3}x_5^{12} $& 316. \ $x_1^{3}x_2^{7}x_3^{3}x_4^{4}x_5^{11} $\cr  
317. \ $x_1^{3}x_2^{7}x_3^{3}x_4^{7}x_5^{8} $& 318. \ $x_1^{3}x_2^{7}x_3^{3}x_4^{8}x_5^{7} $& 319. \ $x_1^{3}x_2^{7}x_3^{3}x_4^{11}x_5^{4} $& 320. \ $x_1^{3}x_2^{7}x_3^{3}x_4^{12}x_5^{3} $\cr  
321. \ $x_1^{3}x_2^{7}x_3^{7}x_4^{8}x_5^{3} $& 322. \ $x_1^{3}x_2^{7}x_3^{8}x_4^{3}x_5^{7} $& 323. \ $x_1^{3}x_2^{7}x_3^{8}x_4^{7}x_5^{3} $& 324. \ $x_1^{3}x_2^{7}x_3^{11}x_4^{4}x_5^{3} $\cr  
325. \ $x_1^{3}x_2^{15}x_3^{3}x_4^{3}x_5^{4} $& 326. \ $x_1^{3}x_2^{15}x_3^{3}x_4^{4}x_5^{3} $& 327. \ $x_1^{7}x_2^{3}x_3^{3}x_4^{3}x_5^{12} $& 328. \ $x_1^{7}x_2^{3}x_3^{3}x_4^{4}x_5^{11} $\cr  
329. \ $x_1^{7}x_2^{3}x_3^{3}x_4^{7}x_5^{8} $& 330. \ $x_1^{7}x_2^{3}x_3^{3}x_4^{8}x_5^{7} $& 331. \ $x_1^{7}x_2^{3}x_3^{3}x_4^{11}x_5^{4} $& 332. \ $x_1^{7}x_2^{3}x_3^{3}x_4^{12}x_5^{3} $\cr  
333. \ $x_1^{7}x_2^{3}x_3^{7}x_4^{3}x_5^{8} $& 334. \ $x_1^{7}x_2^{3}x_3^{7}x_4^{8}x_5^{3} $& 335. \ $x_1^{7}x_2^{3}x_3^{8}x_4^{3}x_5^{7} $& 336. \ $x_1^{7}x_2^{3}x_3^{8}x_4^{7}x_5^{3} $\cr  
337. \ $x_1^{7}x_2^{3}x_3^{11}x_4^{3}x_5^{4} $& 338. \ $x_1^{7}x_2^{3}x_3^{11}x_4^{4}x_5^{3} $& 339. \ $x_1^{7}x_2^{7}x_3^{3}x_4^{3}x_5^{8} $& 340. \ $x_1^{7}x_2^{7}x_3^{3}x_4^{8}x_5^{3} $\cr  
 \end{tabular}}
 \centerline{\begin{tabular}{llll}
341. \ $x_1^{7}x_2^{7}x_3^{8}x_4^{3}x_5^{3} $& 342. \ $x_1^{7}x_2^{7}x_3^{9}x_4^{2}x_5^{3} $& 343. \ $x_1^{7}x_2^{7}x_3^{9}x_4^{3}x_5^{2} $& 344. \ $x_1^{7}x_2^{9}x_3^{2}x_4^{3}x_5^{7} $\cr  
345. \ $x_1^{7}x_2^{9}x_3^{2}x_4^{7}x_5^{3} $& 346. \ $x_1^{7}x_2^{9}x_3^{3}x_4^{2}x_5^{7} $& 347. \ $x_1^{7}x_2^{9}x_3^{3}x_4^{3}x_5^{6} $& 348. \ $x_1^{7}x_2^{9}x_3^{3}x_4^{6}x_5^{3} $\cr  
349. \ $x_1^{7}x_2^{9}x_3^{3}x_4^{7}x_5^{2} $& 350. \ $x_1^{7}x_2^{9}x_3^{7}x_4^{2}x_5^{3} $& 351. \ $x_1^{7}x_2^{9}x_3^{7}x_4^{3}x_5^{2} $& 352. \ $x_1^{7}x_2^{11}x_3^{3}x_4^{3}x_5^{4} $\cr  
353. \ $x_1^{7}x_2^{11}x_3^{3}x_4^{4}x_5^{3} $& 354. \ $x_1^{15}x_2^{3}x_3^{3}x_4^{3}x_5^{4} $& 355. \ $x_1^{15}x_2^{3}x_3^{3}x_4^{4}x_5^{3} $&\cr
 \end{tabular}}

\medskip
For $d \geqslant 4$,

\medskip
 \centerline{\begin{tabular}{llll}
297. & $x_1^{3}x_2^{2^{d-1}-3}x_3^{2^{d-1}-2}x_4^{2^d-1}x_5^{2^{d+1}-1} $& 298. & $x_1^{3}x_2^{2^{d-1}-3}x_3^{2^{d-1}-2}x_4^{2^{d+1}-1}x_5^{2^d-1} $\cr 
299. & $x_1^{3}x_2^{2^{d-1}-3}x_3^{2^{d-1}-1}x_4^{2^d-2}x_5^{2^{d+1}-1} $& 300. & $x_1^{3}x_2^{2^{d-1}-3}x_3^{2^{d-1}-1}x_4^{2^d-1}x_5^{2^{d+1}-2} $\cr 
301. & $x_1^{3}x_2^{2^{d-1}-3}x_3^{2^{d-1}-1}x_4^{2^{d+1}-2}x_5^{2^d-1} $& 302. & $x_1^{3}x_2^{2^{d-1}-3}x_3^{2^{d-1}-1}x_4^{2^{d+1}-1}x_5^{2^d-2} $\cr 
303. & $x_1^{3}x_2^{2^{d-1}-3}x_3^{2^d-2}x_4^{2^{d-1}-1}x_5^{2^{d+1}-1} $& 304. & $x_1^{3}x_2^{2^{d-1}-3}x_3^{2^d-2}x_4^{2^d-1}x_5^{2^{d+1}-2^{d-1}-1} $\cr 
305. & $x_1^{3}x_2^{2^{d-1}-3}x_3^{2^d-2}x_4^{2^{d+1}-2^{d-1}-1}x_5^{2^d-1} $& 306. & $x_1^{3}x_2^{2^{d-1}-3}x_3^{2^d-2}x_4^{2^{d+1}-1}x_5^{2^{d-1}-1} $\cr 
307. & $x_1^{3}x_2^{2^{d-1}-3}x_3^{2^d-1}x_4^{2^{d-1}-2}x_5^{2^{d+1}-1} $& 308. & $x_1^{3}x_2^{2^{d-1}-3}x_3^{2^d-1}x_4^{2^{d-1}-1}x_5^{2^{d+1}-2} $\cr 
309. & $x_1^{3}x_2^{2^{d-1}-3}x_3^{2^d-1}x_4^{2^d-2}x_5^{2^{d+1}-2^{d-1}-1} $& 310. & $x_1^{3}x_2^{2^{d-1}-3}x_3^{2^d-1}x_4^{2^d-1}x_5^{2^{d+1}-2^{d-1}-2} $\cr 
311. & $x_1^{3}x_2^{2^{d-1}-3}x_3^{2^d-1}x_4^{2^{d+1}-2^{d-1}-2}x_5^{2^d-1} $& 312. & $x_1^{3}x_2^{2^{d-1}-3}x_3^{2^d-1}x_4^{2^{d+1}-2^{d-1}-1}x_5^{2^d-2} $\cr 
313. & $x_1^{3}x_2^{2^{d-1}-3}x_3^{2^d-1}x_4^{2^{d+1}-2}x_5^{2^{d-1}-1} $& 314. & $x_1^{3}x_2^{2^{d-1}-3}x_3^{2^d-1}x_4^{2^{d+1}-1}x_5^{2^{d-1}-2} $\cr 
315. & $x_1^{3}x_2^{2^{d-1}-3}x_3^{2^{d+1}-2}x_4^{2^{d-1}-1}x_5^{2^d-1} $& 316. & $x_1^{3}x_2^{2^{d-1}-3}x_3^{2^{d+1}-2}x_4^{2^d-1}x_5^{2^{d-1}-1} $\cr 
317. & $x_1^{3}x_2^{2^{d-1}-3}x_3^{2^{d+1}-1}x_4^{2^{d-1}-2}x_5^{2^d-1} $& 318. & $x_1^{3}x_2^{2^{d-1}-3}x_3^{2^{d+1}-1}x_4^{2^{d-1}-1}x_5^{2^d-2} $\cr 
319. & $x_1^{3}x_2^{2^{d-1}-3}x_3^{2^{d+1}-1}x_4^{2^d-2}x_5^{2^{d-1}-1} $& 320. & $x_1^{3}x_2^{2^{d-1}-3}x_3^{2^{d+1}-1}x_4^{2^d-1}x_5^{2^{d-1}-2} $\cr 
321. & $x_1^{3}x_2^{2^{d-1}-1}x_3^{2^{d-1}-3}x_4^{2^d-2}x_5^{2^{d+1}-1} $& 322. & $x_1^{3}x_2^{2^{d-1}-1}x_3^{2^{d-1}-3}x_4^{2^d-1}x_5^{2^{d+1}-2} $\cr 
323. & $x_1^{3}x_2^{2^{d-1}-1}x_3^{2^{d-1}-3}x_4^{2^{d+1}-2}x_5^{2^d-1} $& 324. & $x_1^{3}x_2^{2^{d-1}-1}x_3^{2^{d-1}-3}x_4^{2^{d+1}-1}x_5^{2^d-2} $\cr 
325. & $x_1^{3}x_2^{2^{d-1}-1}x_3^{2^d-1}x_4^{2^{d-1}-3}x_5^{2^{d+1}-2} $& 326. & $x_1^{3}x_2^{2^{d-1}-1}x_3^{2^{d+1}-1}x_4^{2^{d-1}-3}x_5^{2^d-2} $\cr 
327. & $x_1^{3}x_2^{2^d-1}x_3^{2^{d-1}-3}x_4^{2^{d-1}-2}x_5^{2^{d+1}-1} $& 328. & $x_1^{3}x_2^{2^d-1}x_3^{2^{d-1}-3}x_4^{2^{d-1}-1}x_5^{2^{d+1}-2} $\cr 
329. & $x_1^{3}x_2^{2^d-1}x_3^{2^{d-1}-3}x_4^{2^d-2}x_5^{2^{d+1}-2^{d-1}-1} $& 330. & $x_1^{3}x_2^{2^d-1}x_3^{2^{d-1}-3}x_4^{2^d-1}x_5^{2^{d+1}-2^{d-1}-2} $\cr 
331. & $x_1^{3}x_2^{2^d-1}x_3^{2^{d-1}-3}x_4^{2^{d+1}-2^{d-1}-2}x_5^{2^d-1} $& 332. & $x_1^{3}x_2^{2^d-1}x_3^{2^{d-1}-3}x_4^{2^{d+1}-2^{d-1}-1}x_5^{2^d-2} $\cr 
333. & $x_1^{3}x_2^{2^d-1}x_3^{2^{d-1}-3}x_4^{2^{d+1}-2}x_5^{2^{d-1}-1} $& 334. & $x_1^{3}x_2^{2^d-1}x_3^{2^{d-1}-3}x_4^{2^{d+1}-1}x_5^{2^{d-1}-2} $\cr 
335. & $x_1^{3}x_2^{2^d-1}x_3^{2^{d-1}-1}x_4^{2^{d-1}-3}x_5^{2^{d+1}-2} $& 336. & $x_1^{3}x_2^{2^d-1}x_3^{2^{d+1}-1}x_4^{2^{d-1}-3}x_5^{2^{d-1}-2} $\cr 
337. & $x_1^{3}x_2^{2^{d+1}-1}x_3^{2^{d-1}-3}x_4^{2^{d-1}-2}x_5^{2^d-1} $& 338. & $x_1^{3}x_2^{2^{d+1}-1}x_3^{2^{d-1}-3}x_4^{2^{d-1}-1}x_5^{2^d-2} $\cr 
339. & $x_1^{3}x_2^{2^{d+1}-1}x_3^{2^{d-1}-3}x_4^{2^d-2}x_5^{2^{d-1}-1} $& 340. & $x_1^{3}x_2^{2^{d+1}-1}x_3^{2^{d-1}-3}x_4^{2^d-1}x_5^{2^{d-1}-2} $\cr 
341. & $x_1^{3}x_2^{2^{d+1}-1}x_3^{2^{d-1}-1}x_4^{2^{d-1}-3}x_5^{2^d-2} $& 342. & $x_1^{3}x_2^{2^{d+1}-1}x_3^{2^d-1}x_4^{2^{d-1}-3}x_5^{2^{d-1}-2} $\cr 
343. & $x_1^{7}x_2^{2^{d-1}-1}x_3^{2^d-5}x_4^{2^{d-1}-3}x_5^{2^{d+1}-2} $& 344. & $x_1^{7}x_2^{2^{d-1}-1}x_3^{2^d-5}x_4^{2^d-3}x_5^{2^{d+1}-2^{d-1}-2} $\cr 
345. & $x_1^{7}x_2^{2^{d-1}-1}x_3^{2^d-5}x_4^{2^{d+1}-2^{d-1}-3}x_5^{2^d-2} $& 346. & $x_1^{7}x_2^{2^{d-1}-1}x_3^{2^d-5}x_4^{2^{d+1}-3}x_5^{2^{d-1}-2} $\cr 
347. & $x_1^{7}x_2^{2^{d-1}-1}x_3^{2^{d+1}-5}x_4^{2^{d-1}-3}x_5^{2^d-2} $& 348. & $x_1^{7}x_2^{2^{d-1}-1}x_3^{2^{d+1}-5}x_4^{2^d-3}x_5^{2^{d-1}-2} $\cr 
349. & $x_1^{7}x_2^{2^d-5}x_3^{2^{d-1}-3}x_4^{2^{d-1}-2}x_5^{2^{d+1}-1} $& 350. & $x_1^{7}x_2^{2^d-5}x_3^{2^{d-1}-3}x_4^{2^{d-1}-1}x_5^{2^{d+1}-2} $\cr 
351. & $x_1^{7}x_2^{2^d-5}x_3^{2^{d-1}-3}x_4^{2^d-2}x_5^{2^{d+1}-2^{d-1}-1} $& 352. & $x_1^{7}x_2^{2^d-5}x_3^{2^{d-1}-3}x_4^{2^d-1}x_5^{2^{d+1}-2^{d-1}-2} $\cr 
353. & $x_1^{7}x_2^{2^d-5}x_3^{2^{d-1}-3}x_4^{2^{d+1}-2^{d-1}-2}x_5^{2^d-1} $& 354. & $x_1^{7}x_2^{2^d-5}x_3^{2^{d-1}-3}x_4^{2^{d+1}-2^{d-1}-1}x_5^{2^d-2} $\cr 
355. & $x_1^{7}x_2^{2^d-5}x_3^{2^{d-1}-3}x_4^{2^{d+1}-2}x_5^{2^{d-1}-1} $& 356. & $x_1^{7}x_2^{2^d-5}x_3^{2^{d-1}-3}x_4^{2^{d+1}-1}x_5^{2^{d-1}-2} $\cr 
357. & $x_1^{7}x_2^{2^d-5}x_3^{2^{d-1}-1}x_4^{2^{d-1}-3}x_5^{2^{d+1}-2} $& 358. & $x_1^{7}x_2^{2^d-5}x_3^{2^{d-1}-1}x_4^{2^d-3}x_5^{2^{d+1}-2^{d-1}-2} $\cr 
359. & $x_1^{7}x_2^{2^d-5}x_3^{2^{d-1}-1}x_4^{2^{d+1}-2^{d-1}-3}x_5^{2^d-2} $& 360. & $x_1^{7}x_2^{2^d-5}x_3^{2^{d-1}-1}x_4^{2^{d+1}-3}x_5^{2^{d-1}-2} $\cr 
361. & $x_1^{7}x_2^{2^d-5}x_3^{2^d-3}x_4^{2^{d-1}-2}x_5^{2^{d+1}-2^{d-1}-1} $& 362. & $x_1^{7}x_2^{2^d-5}x_3^{2^d-3}x_4^{2^{d-1}-1}x_5^{2^{d+1}-2^{d-1}-2} $\cr 
363. & $x_1^{7}x_2^{2^d-5}x_3^{2^d-3}x_4^{2^{d+1}-2^{d-1}-2}x_5^{2^{d-1}-1} $& 364. & $x_1^{7}x_2^{2^d-5}x_3^{2^d-3}x_4^{2^{d+1}-2^{d-1}-1}x_5^{2^{d-1}-2} $\cr 
365. & $x_1^{7}x_2^{2^d-5}x_3^{2^d-1}x_4^{2^{d-1}-3}x_5^{2^{d+1}-2^{d-1}-2} $& 366. & $x_1^{7}x_2^{2^d-5}x_3^{2^d-1}x_4^{2^{d+1}-2^{d-1}-3}x_5^{2^{d-1}-2} $\cr 
367. & $x_1^{7}x_2^{2^d-5}x_3^{2^{d+1}-2^{d-1}-3}x_4^{2^{d-1}-2}x_5^{2^d-1} $& 368. & $x_1^{7}x_2^{2^d-5}x_3^{2^{d+1}-2^{d-1}-3}x_4^{2^{d-1}-1}x_5^{2^d-2} $\cr 
 \end{tabular}}
 \centerline{\begin{tabular}{llll}
369. & $x_1^{7}x_2^{2^d-5}x_3^{2^{d+1}-2^{d-1}-3}x_4^{2^d-2}x_5^{2^{d-1}-1} $& 370. & $x_1^{7}x_2^{2^d-5}x_3^{2^{d+1}-2^{d-1}-3}x_4^{2^d-1}x_5^{2^{d-1}-2} $\cr 
371. & $x_1^{7}x_2^{2^d-5}x_3^{2^{d+1}-2^{d-1}-1}x_4^{2^{d-1}-3}x_5^{2^d-2} $& 372. & $x_1^{7}x_2^{2^d-5}x_3^{2^{d+1}-2^{d-1}-1}x_4^{2^d-3}x_5^{2^{d-1}-2} $\cr 
373. & $x_1^{7}x_2^{2^d-5}x_3^{2^{d+1}-3}x_4^{2^{d-1}-2}x_5^{2^{d-1}-1} $& 374. & $x_1^{7}x_2^{2^d-5}x_3^{2^{d+1}-3}x_4^{2^{d-1}-1}x_5^{2^{d-1}-2} $\cr 
375. & $x_1^{7}x_2^{2^d-5}x_3^{2^{d+1}-1}x_4^{2^{d-1}-3}x_5^{2^{d-1}-2} $& 376. & $x_1^{7}x_2^{2^d-1}x_3^{2^d-5}x_4^{2^{d-1}-3}x_5^{2^{d+1}-2^{d-1}-2} $\cr 
377. & $x_1^{7}x_2^{2^d-1}x_3^{2^d-5}x_4^{2^{d+1}-2^{d-1}-3}x_5^{2^{d-1}-2} $& 378. & $x_1^{7}x_2^{2^d-1}x_3^{2^{d+1}-2^{d-1}-5}x_4^{2^{d-1}-3}x_5^{2^d-2} $\cr 
379. & $x_1^{7}x_2^{2^d-1}x_3^{2^{d+1}-2^{d-1}-5}x_4^{2^d-3}x_5^{2^{d-1}-2} $& 380. & $x_1^{7}x_2^{2^d-1}x_3^{2^{d+1}-5}x_4^{2^{d-1}-3}x_5^{2^{d-1}-2} $\cr 
381. & $x_1^{7}x_2^{2^{d+1}-5}x_3^{2^{d-1}-3}x_4^{2^{d-1}-2}x_5^{2^d-1} $& 382. & $x_1^{7}x_2^{2^{d+1}-5}x_3^{2^{d-1}-3}x_4^{2^{d-1}-1}x_5^{2^d-2} $\cr 
383. & $x_1^{7}x_2^{2^{d+1}-5}x_3^{2^{d-1}-3}x_4^{2^d-2}x_5^{2^{d-1}-1} $& 384. & $x_1^{7}x_2^{2^{d+1}-5}x_3^{2^{d-1}-3}x_4^{2^d-1}x_5^{2^{d-1}-2} $\cr 
385. & $x_1^{7}x_2^{2^{d+1}-5}x_3^{2^{d-1}-1}x_4^{2^{d-1}-3}x_5^{2^d-2} $& 386. & $x_1^{7}x_2^{2^{d+1}-5}x_3^{2^{d-1}-1}x_4^{2^d-3}x_5^{2^{d-1}-2} $\cr 
387. & $x_1^{7}x_2^{2^{d+1}-5}x_3^{2^d-1}x_4^{2^{d-1}-3}x_5^{2^{d-1}-2} $& 388. & $x_1^{7}x_2^{2^{d+1}-1}x_3^{2^d-5}x_4^{2^{d-1}-3}x_5^{2^{d-1}-2} $\cr 
389. & $x_1^{15}x_2^{2^{d+1}-9}x_3^{2^d-5}x_4^{2^{d-1}-3}x_5^{2^{d-1}-2} $& 390. & $x_1^{2^{d-1}-1}x_2^{3}x_3^{2^{d-1}-3}x_4^{2^d-2}x_5^{2^{d+1}-1} $\cr 
391. & $x_1^{2^{d-1}-1}x_2^{3}x_3^{2^{d-1}-3}x_4^{2^d-1}x_5^{2^{d+1}-2} $& 392. & $x_1^{2^{d-1}-1}x_2^{3}x_3^{2^{d-1}-3}x_4^{2^{d+1}-2}x_5^{2^d-1} $\cr 
393. & $x_1^{2^{d-1}-1}x_2^{3}x_3^{2^{d-1}-3}x_4^{2^{d+1}-1}x_5^{2^d-2} $& 394. & $x_1^{2^{d-1}-1}x_2^{3}x_3^{2^{d-1}-1}x_4^{2^d-3}x_5^{2^{d+1}-2} $\cr 
395. & $x_1^{2^{d-1}-1}x_2^{3}x_3^{2^{d-1}-1}x_4^{2^{d+1}-3}x_5^{2^d-2} $& 396. & $x_1^{2^{d-1}-1}x_2^{3}x_3^{2^d-3}x_4^{2^{d-1}-2}x_5^{2^{d+1}-1} $\cr 
397. & $x_1^{2^{d-1}-1}x_2^{3}x_3^{2^d-3}x_4^{2^{d-1}-1}x_5^{2^{d+1}-2} $& 398. & $x_1^{2^{d-1}-1}x_2^{3}x_3^{2^d-3}x_4^{2^d-2}x_5^{2^{d+1}-2^{d-1}-1} $\cr 
399. & $x_1^{2^{d-1}-1}x_2^{3}x_3^{2^d-3}x_4^{2^d-1}x_5^{2^{d+1}-2^{d-1}-2} $& 400. & $x_1^{2^{d-1}-1}x_2^{3}x_3^{2^d-3}x_4^{2^{d+1}-2^{d-1}-2}x_5^{2^d-1} $\cr 
401. & $x_1^{2^{d-1}-1}x_2^{3}x_3^{2^d-3}x_4^{2^{d+1}-2^{d-1}-1}x_5^{2^d-2} $& 402. & $x_1^{2^{d-1}-1}x_2^{3}x_3^{2^d-3}x_4^{2^{d+1}-2}x_5^{2^{d-1}-1} $\cr 
403. & $x_1^{2^{d-1}-1}x_2^{3}x_3^{2^d-3}x_4^{2^{d+1}-1}x_5^{2^{d-1}-2} $& 404. & $x_1^{2^{d-1}-1}x_2^{3}x_3^{2^d-1}x_4^{2^{d-1}-3}x_5^{2^{d+1}-2} $\cr 
405. & $x_1^{2^{d-1}-1}x_2^{3}x_3^{2^d-1}x_4^{2^d-3}x_5^{2^{d+1}-2^{d-1}-2} $& 406. & $x_1^{2^{d-1}-1}x_2^{3}x_3^{2^d-1}x_4^{2^{d+1}-2^{d-1}-3}x_5^{2^d-2} $\cr 
407. & $x_1^{2^{d-1}-1}x_2^{3}x_3^{2^d-1}x_4^{2^{d+1}-3}x_5^{2^{d-1}-2} $& 408. & $x_1^{2^{d-1}-1}x_2^{3}x_3^{2^{d+1}-3}x_4^{2^{d-1}-2}x_5^{2^d-1} $\cr 
409. & $x_1^{2^{d-1}-1}x_2^{3}x_3^{2^{d+1}-3}x_4^{2^{d-1}-1}x_5^{2^d-2} $& 410. & $x_1^{2^{d-1}-1}x_2^{3}x_3^{2^{d+1}-3}x_4^{2^d-2}x_5^{2^{d-1}-1} $\cr 
411. & $x_1^{2^{d-1}-1}x_2^{3}x_3^{2^{d+1}-3}x_4^{2^d-1}x_5^{2^{d-1}-2} $& 412. & $x_1^{2^{d-1}-1}x_2^{3}x_3^{2^{d+1}-1}x_4^{2^{d-1}-3}x_5^{2^d-2} $\cr 
413. & $x_1^{2^{d-1}-1}x_2^{3}x_3^{2^{d+1}-1}x_4^{2^d-3}x_5^{2^{d-1}-2} $& 414. & $x_1^{2^{d-1}-1}x_2^{2^{d-1}-1}x_3^{3}x_4^{2^d-3}x_5^{2^{d+1}-2} $\cr 
415. & $x_1^{2^{d-1}-1}x_2^{2^{d-1}-1}x_3^{3}x_4^{2^{d+1}-3}x_5^{2^d-2} $& 416. & $x_1^{2^{d-1}-1}x_2^{2^d-1}x_3^{3}x_4^{2^{d-1}-3}x_5^{2^{d+1}-2} $\cr 
417. & $x_1^{2^{d-1}-1}x_2^{2^d-1}x_3^{3}x_4^{2^d-3}x_5^{2^{d+1}-2^{d-1}-2} $& 418. & $x_1^{2^{d-1}-1}x_2^{2^d-1}x_3^{3}x_4^{2^{d+1}-2^{d-1}-3}x_5^{2^d-2} $\cr 
419. & $x_1^{2^{d-1}-1}x_2^{2^d-1}x_3^{3}x_4^{2^{d+1}-3}x_5^{2^{d-1}-2} $& 420. & $x_1^{2^{d-1}-1}x_2^{2^{d+1}-1}x_3^{3}x_4^{2^{d-1}-3}x_5^{2^d-2} $\cr 
421. & $x_1^{2^{d-1}-1}x_2^{2^{d+1}-1}x_3^{3}x_4^{2^d-3}x_5^{2^{d-1}-2} $& 422. & $x_1^{2^d-1}x_2^{3}x_3^{2^{d-1}-3}x_4^{2^{d-1}-2}x_5^{2^{d+1}-1} $\cr 
423. & $x_1^{2^d-1}x_2^{3}x_3^{2^{d-1}-3}x_4^{2^{d-1}-1}x_5^{2^{d+1}-2} $& 424. & $x_1^{2^d-1}x_2^{3}x_3^{2^{d-1}-3}x_4^{2^d-2}x_5^{2^{d+1}-2^{d-1}-1} $\cr 
425. & $x_1^{2^d-1}x_2^{3}x_3^{2^{d-1}-3}x_4^{2^d-1}x_5^{2^{d+1}-2^{d-1}-2} $& 426. & $x_1^{2^d-1}x_2^{3}x_3^{2^{d-1}-3}x_4^{2^{d+1}-2^{d-1}-2}x_5^{2^d-1} $\cr 
427. & $x_1^{2^d-1}x_2^{3}x_3^{2^{d-1}-3}x_4^{2^{d+1}-2^{d-1}-1}x_5^{2^d-2} $& 428. & $x_1^{2^d-1}x_2^{3}x_3^{2^{d-1}-3}x_4^{2^{d+1}-2}x_5^{2^{d-1}-1} $\cr 
429. & $x_1^{2^d-1}x_2^{3}x_3^{2^{d-1}-3}x_4^{2^{d+1}-1}x_5^{2^{d-1}-2} $& 430. & $x_1^{2^d-1}x_2^{3}x_3^{2^{d-1}-1}x_4^{2^{d-1}-3}x_5^{2^{d+1}-2} $\cr 
431. & $x_1^{2^d-1}x_2^{3}x_3^{2^d-1}x_4^{2^{d-1}-3}x_5^{2^{d+1}-2^{d-1}-2} $& 432. & $x_1^{2^d-1}x_2^{3}x_3^{2^{d+1}-2^{d-1}-1}x_4^{2^{d-1}-3}x_5^{2^d-2} $\cr 
433. & $x_1^{2^d-1}x_2^{3}x_3^{2^{d+1}-1}x_4^{2^{d-1}-3}x_5^{2^{d-1}-2} $& 434. & $x_1^{2^d-1}x_2^{7}x_3^{2^d-5}x_4^{2^{d-1}-3}x_5^{2^{d+1}-2^{d-1}-2} $\cr 
435. & $x_1^{2^d-1}x_2^{7}x_3^{2^d-5}x_4^{2^{d+1}-2^{d-1}-3}x_5^{2^{d-1}-2} $& 436. & $x_1^{2^d-1}x_2^{7}x_3^{2^{d+1}-2^{d-1}-5}x_4^{2^{d-1}-3}x_5^{2^d-2} $\cr 
437. & $x_1^{2^d-1}x_2^{7}x_3^{2^{d+1}-2^{d-1}-5}x_4^{2^d-3}x_5^{2^{d-1}-2} $& 438. & $x_1^{2^d-1}x_2^{7}x_3^{2^{d+1}-5}x_4^{2^{d-1}-3}x_5^{2^{d-1}-2} $\cr 
439. & $x_1^{2^d-1}x_2^{2^{d-1}-1}x_3^{3}x_4^{2^{d-1}-3}x_5^{2^{d+1}-2} $& 440. & $x_1^{2^d-1}x_2^{2^{d-1}-1}x_3^{3}x_4^{2^d-3}x_5^{2^{d+1}-2^{d-1}-2} $\cr 
441. & $x_1^{2^d-1}x_2^{2^{d-1}-1}x_3^{3}x_4^{2^{d+1}-2^{d-1}-3}x_5^{2^d-2} $& 442. & $x_1^{2^d-1}x_2^{2^{d-1}-1}x_3^{3}x_4^{2^{d+1}-3}x_5^{2^{d-1}-2} $\cr 
443. & $x_1^{2^d-1}x_2^{2^d-1}x_3^{3}x_4^{2^{d-1}-3}x_5^{2^{d+1}-2^{d-1}-2} $& 444. & $x_1^{2^d-1}x_2^{2^{d+1}-2^{d-1}-1}x_3^{3}x_4^{2^{d-1}-3}x_5^{2^d-2} $\cr 
445. & $x_1^{2^d-1}x_2^{2^{d+1}-1}x_3^{3}x_4^{2^{d-1}-3}x_5^{2^{d-1}-2} $& 446. & $x_1^{2^{d+1}-1}x_2^{3}x_3^{2^{d-1}-3}x_4^{2^{d-1}-2}x_5^{2^d-1} $\cr 
447. & $x_1^{2^{d+1}-1}x_2^{3}x_3^{2^{d-1}-3}x_4^{2^{d-1}-1}x_5^{2^d-2} $& 448. & $x_1^{2^{d+1}-1}x_2^{3}x_3^{2^{d-1}-3}x_4^{2^d-2}x_5^{2^{d-1}-1} $\cr 
 \end{tabular}}
 \centerline{\begin{tabular}{llll}
449. & $x_1^{2^{d+1}-1}x_2^{3}x_3^{2^{d-1}-3}x_4^{2^d-1}x_5^{2^{d-1}-2} $& 450. & $x_1^{2^{d+1}-1}x_2^{3}x_3^{2^{d-1}-1}x_4^{2^{d-1}-3}x_5^{2^d-2} $\cr 
451. & $x_1^{2^{d+1}-1}x_2^{3}x_3^{2^d-1}x_4^{2^{d-1}-3}x_5^{2^{d-1}-2} $& 452. & $x_1^{2^{d+1}-1}x_2^{7}x_3^{2^d-5}x_4^{2^{d-1}-3}x_5^{2^{d-1}-2} $\cr 
453. & $x_1^{2^{d+1}-1}x_2^{2^{d-1}-1}x_3^{3}x_4^{2^{d-1}-3}x_5^{2^d-2} $& 454. & $x_1^{2^{d+1}-1}x_2^{2^{d-1}-1}x_3^{3}x_4^{2^d-3}x_5^{2^{d-1}-2} $\cr 
455. & $x_1^{2^{d+1}-1}x_2^{2^d-1}x_3^{3}x_4^{2^{d-1}-3}x_5^{2^{d-1}-2} $&
 \end{tabular}}

\medskip
For $d = 4$,

\medskip
 \centerline{\begin{tabular}{llll}
456. \ $x_1^{7}x_2^{7}x_3^{7}x_4^{8}x_5^{31} $& 457. \ $x_1^{7}x_2^{7}x_3^{7}x_4^{9}x_5^{30} $& 458. \ $x_1^{7}x_2^{7}x_3^{7}x_4^{15}x_5^{24} $& 459. \ $x_1^{7}x_2^{7}x_3^{7}x_4^{24}x_5^{15} $\cr  
460. \ $x_1^{7}x_2^{7}x_3^{7}x_4^{25}x_5^{14} $& 461. \ $x_1^{7}x_2^{7}x_3^{7}x_4^{31}x_5^{8} $& 462. \ $x_1^{7}x_2^{7}x_3^{9}x_4^{6}x_5^{31} $& 463. \ $x_1^{7}x_2^{7}x_3^{9}x_4^{7}x_5^{30} $\cr  
464. \ $x_1^{7}x_2^{7}x_3^{9}x_4^{14}x_5^{23} $& 465. \ $x_1^{7}x_2^{7}x_3^{9}x_4^{15}x_5^{22} $& 466. \ $x_1^{7}x_2^{7}x_3^{9}x_4^{22}x_5^{15} $& 467. \ $x_1^{7}x_2^{7}x_3^{9}x_4^{23}x_5^{14} $\cr  
468. \ $x_1^{7}x_2^{7}x_3^{9}x_4^{30}x_5^{7} $& 469. \ $x_1^{7}x_2^{7}x_3^{9}x_4^{31}x_5^{6} $& 470. \ $x_1^{7}x_2^{7}x_3^{15}x_4^{7}x_5^{24} $& 471. \ $x_1^{7}x_2^{7}x_3^{15}x_4^{9}x_5^{22} $\cr  
472. \ $x_1^{7}x_2^{7}x_3^{15}x_4^{15}x_5^{16} $& 473. \ $x_1^{7}x_2^{7}x_3^{15}x_4^{17}x_5^{14} $& 474. \ $x_1^{7}x_2^{7}x_3^{15}x_4^{23}x_5^{8} $& 475. \ $x_1^{7}x_2^{7}x_3^{15}x_4^{25}x_5^{6} $\cr  
476. \ $x_1^{7}x_2^{7}x_3^{25}x_4^{6}x_5^{15} $& 477. \ $x_1^{7}x_2^{7}x_3^{25}x_4^{7}x_5^{14} $& 478. \ $x_1^{7}x_2^{7}x_3^{25}x_4^{14}x_5^{7} $& 479. \ $x_1^{7}x_2^{7}x_3^{25}x_4^{15}x_5^{6} $\cr  
480. \ $x_1^{7}x_2^{7}x_3^{31}x_4^{7}x_5^{8} $& 481. \ $x_1^{7}x_2^{7}x_3^{31}x_4^{9}x_5^{6} $& 482. \ $x_1^{7}x_2^{15}x_3^{7}x_4^{7}x_5^{24} $& 483. \ $x_1^{7}x_2^{15}x_3^{7}x_4^{9}x_5^{22} $\cr  
484. \ $x_1^{7}x_2^{15}x_3^{7}x_4^{15}x_5^{16} $& 485. \ $x_1^{7}x_2^{15}x_3^{7}x_4^{17}x_5^{14} $& 486. \ $x_1^{7}x_2^{15}x_3^{7}x_4^{23}x_5^{8} $& 487. \ $x_1^{7}x_2^{15}x_3^{7}x_4^{25}x_5^{6} $\cr  
488. \ $x_1^{7}x_2^{15}x_3^{15}x_4^{7}x_5^{16} $& 489. \ $x_1^{7}x_2^{15}x_3^{15}x_4^{16}x_5^{7} $& 490. \ $x_1^{7}x_2^{15}x_3^{15}x_4^{17}x_5^{6} $& 491. \ $x_1^{7}x_2^{15}x_3^{17}x_4^{7}x_5^{14} $\cr  
492. \ $x_1^{7}x_2^{15}x_3^{23}x_4^{7}x_5^{8} $& 493. \ $x_1^{7}x_2^{15}x_3^{23}x_4^{9}x_5^{6} $& 494. \ $x_1^{7}x_2^{31}x_3^{7}x_4^{7}x_5^{8} $& 495. \ $x_1^{7}x_2^{31}x_3^{7}x_4^{9}x_5^{6} $\cr  
496. \ $x_1^{15}x_2^{7}x_3^{7}x_4^{7}x_5^{24} $& 497. \ $x_1^{15}x_2^{7}x_3^{7}x_4^{9}x_5^{22} $& 498. \ $x_1^{15}x_2^{7}x_3^{7}x_4^{15}x_5^{16} $& 499. \ $x_1^{15}x_2^{7}x_3^{7}x_4^{17}x_5^{14} $\cr  
500. \ $x_1^{15}x_2^{7}x_3^{7}x_4^{23}x_5^{8} $& 501. \ $x_1^{15}x_2^{7}x_3^{7}x_4^{25}x_5^{6} $& 502. \ $x_1^{15}x_2^{7}x_3^{15}x_4^{7}x_5^{16} $& 503. \ $x_1^{15}x_2^{7}x_3^{15}x_4^{16}x_5^{7} $\cr  
504. \ $x_1^{15}x_2^{7}x_3^{15}x_4^{17}x_5^{6} $& 505. \ $x_1^{15}x_2^{7}x_3^{17}x_4^{7}x_5^{14} $& 506. \ $x_1^{15}x_2^{7}x_3^{23}x_4^{7}x_5^{8} $& 507. \ $x_1^{15}x_2^{7}x_3^{23}x_4^{9}x_5^{6} $\cr  
508. \ $x_1^{15}x_2^{15}x_3^{7}x_4^{7}x_5^{16} $& 509. \ $x_1^{15}x_2^{15}x_3^{7}x_4^{16}x_5^{7} $& 510. \ $x_1^{15}x_2^{15}x_3^{7}x_4^{17}x_5^{6} $& 511. \ $x_1^{15}x_2^{15}x_3^{17}x_4^{6}x_5^{7} $\cr  
512. \ $x_1^{15}x_2^{15}x_3^{17}x_4^{7}x_5^{6} $& 513. \ $x_1^{15}x_2^{15}x_3^{19}x_4^{5}x_5^{6} $& 514. \ $x_1^{15}x_2^{19}x_3^{5}x_4^{7}x_5^{14} $& 515. \ $x_1^{15}x_2^{19}x_3^{7}x_4^{5}x_5^{14} $\cr  
516. \ $x_1^{15}x_2^{19}x_3^{7}x_4^{13}x_5^{6} $& 517. \ $x_1^{15}x_2^{23}x_3^{7}x_4^{7}x_5^{8} $& 518. \ $x_1^{15}x_2^{23}x_3^{7}x_4^{9}x_5^{6} $& 519. \ $x_1^{31}x_2^{7}x_3^{7}x_4^{7}x_5^{8} $\cr  
520. \ $x_1^{31}x_2^{7}x_3^{7}x_4^{9}x_5^{6} $& 
 \end{tabular}}

\medskip
For $d \geqslant 5$,

\medskip
 \centerline{\begin{tabular}{llll}
456. & $x_1^{7}x_2^{2^{d-1}-5}x_3^{2^{d-1}-3}x_4^{2^d-2}x_5^{2^{d+1}-1} $& 457. & $x_1^{7}x_2^{2^{d-1}-5}x_3^{2^{d-1}-3}x_4^{2^d-1}x_5^{2^{d+1}-2} $\cr 
458. & $x_1^{7}x_2^{2^{d-1}-5}x_3^{2^{d-1}-3}x_4^{2^{d+1}-2}x_5^{2^d-1} $& 459. & $x_1^{7}x_2^{2^{d-1}-5}x_3^{2^{d-1}-3}x_4^{2^{d+1}-1}x_5^{2^d-2} $\cr 
460. & $x_1^{7}x_2^{2^{d-1}-5}x_3^{2^{d-1}-1}x_4^{2^d-3}x_5^{2^{d+1}-2} $& 461. & $x_1^{7}x_2^{2^{d-1}-5}x_3^{2^{d-1}-1}x_4^{2^{d+1}-3}x_5^{2^d-2} $\cr 
462. & $x_1^{7}x_2^{2^{d-1}-5}x_3^{2^d-3}x_4^{2^{d-1}-2}x_5^{2^{d+1}-1} $& 463. & $x_1^{7}x_2^{2^{d-1}-5}x_3^{2^d-3}x_4^{2^{d-1}-1}x_5^{2^{d+1}-2} $\cr 
464. & $x_1^{7}x_2^{2^{d-1}-5}x_3^{2^d-3}x_4^{2^d-2}x_5^{2^{d+1}-2^{d-1}-1} $& 465. & $x_1^{7}x_2^{2^{d-1}-5}x_3^{2^d-3}x_4^{2^d-1}x_5^{2^{d+1}-2^{d-1}-2} $\cr 
466. & $x_1^{7}x_2^{2^{d-1}-5}x_3^{2^d-3}x_4^{2^{d+1}-2^{d-1}-2}x_5^{2^d-1} $& 467. & $x_1^{7}x_2^{2^{d-1}-5}x_3^{2^d-3}x_4^{2^{d+1}-2^{d-1}-1}x_5^{2^d-2} $\cr 
468. & $x_1^{7}x_2^{2^{d-1}-5}x_3^{2^d-3}x_4^{2^{d+1}-2}x_5^{2^{d-1}-1} $& 469. & $x_1^{7}x_2^{2^{d-1}-5}x_3^{2^d-3}x_4^{2^{d+1}-1}x_5^{2^{d-1}-2} $\cr 
470. & $x_1^{7}x_2^{2^{d-1}-5}x_3^{2^d-1}x_4^{2^{d-1}-3}x_5^{2^{d+1}-2} $& 471. & $x_1^{7}x_2^{2^{d-1}-5}x_3^{2^d-1}x_4^{2^d-3}x_5^{2^{d+1}-2^{d-1}-2} $\cr 
472. & $x_1^{7}x_2^{2^{d-1}-5}x_3^{2^d-1}x_4^{2^{d+1}-2^{d-1}-3}x_5^{2^d-2} $& 473. & $x_1^{7}x_2^{2^{d-1}-5}x_3^{2^d-1}x_4^{2^{d+1}-3}x_5^{2^{d-1}-2} $\cr 
474. & $x_1^{7}x_2^{2^{d-1}-5}x_3^{2^{d+1}-3}x_4^{2^{d-1}-2}x_5^{2^d-1} $& 475. & $x_1^{7}x_2^{2^{d-1}-5}x_3^{2^{d+1}-3}x_4^{2^{d-1}-1}x_5^{2^d-2} $\cr 
476. & $x_1^{7}x_2^{2^{d-1}-5}x_3^{2^{d+1}-3}x_4^{2^d-2}x_5^{2^{d-1}-1} $& 477. & $x_1^{7}x_2^{2^{d-1}-5}x_3^{2^{d+1}-3}x_4^{2^d-1}x_5^{2^{d-1}-2} $\cr 
478. & $x_1^{7}x_2^{2^{d-1}-5}x_3^{2^{d+1}-1}x_4^{2^{d-1}-3}x_5^{2^d-2} $& 479. & $x_1^{7}x_2^{2^{d-1}-5}x_3^{2^{d+1}-1}x_4^{2^d-3}x_5^{2^{d-1}-2} $\cr 
480. & $x_1^{7}x_2^{2^{d-1}-1}x_3^{2^{d-1}-5}x_4^{2^d-3}x_5^{2^{d+1}-2} $& 481. & $x_1^{7}x_2^{2^{d-1}-1}x_3^{2^{d-1}-5}x_4^{2^{d+1}-3}x_5^{2^d-2} $\cr 
482. & $x_1^{7}x_2^{2^d-1}x_3^{2^{d-1}-5}x_4^{2^{d-1}-3}x_5^{2^{d+1}-2} $& 483. & $x_1^{7}x_2^{2^d-1}x_3^{2^{d-1}-5}x_4^{2^d-3}x_5^{2^{d+1}-2^{d-1}-2} $\cr 
484. & $x_1^{7}x_2^{2^d-1}x_3^{2^{d-1}-5}x_4^{2^{d+1}-2^{d-1}-3}x_5^{2^d-2} $& 485. & $x_1^{7}x_2^{2^d-1}x_3^{2^{d-1}-5}x_4^{2^{d+1}-3}x_5^{2^{d-1}-2} $\cr 
486. & $x_1^{7}x_2^{2^{d+1}-1}x_3^{2^{d-1}-5}x_4^{2^{d-1}-3}x_5^{2^d-2} $& 487. & $x_1^{7}x_2^{2^{d+1}-1}x_3^{2^{d-1}-5}x_4^{2^d-3}x_5^{2^{d-1}-2} $\cr 
488. & $x_1^{15}x_2^{2^d-9}x_3^{2^{d-1}-5}x_4^{2^{d-1}-3}x_5^{2^{d+1}-2} $& 489. & $x_1^{15}x_2^{2^d-9}x_3^{2^{d-1}-5}x_4^{2^d-3}x_5^{2^{d+1}-2^{d-1}-2} $\cr 
490. & $x_1^{15}x_2^{2^d-9}x_3^{2^{d-1}-5}x_4^{2^{d+1}-2^{d-1}-3}x_5^{2^d-2} $& 491. & $x_1^{15}x_2^{2^d-9}x_3^{2^{d-1}-5}x_4^{2^{d+1}-3}x_5^{2^{d-1}-2} $\cr 
492. & $x_1^{15}x_2^{2^d-9}x_3^{2^d-5}x_4^{2^{d-1}-3}x_5^{2^{d+1}-2^{d-1}-2} $& 493. & $x_1^{15}x_2^{2^d-9}x_3^{2^d-5}x_4^{2^{d+1}-2^{d-1}-3}x_5^{2^{d-1}-2} $\cr 
 \end{tabular}}
 \centerline{\begin{tabular}{llll}
494. & $x_1^{15}x_2^{2^d-9}x_3^{2^{d+1}-2^{d-1}-5}x_4^{2^{d-1}-3}x_5^{2^d-2} $& 495. & $x_1^{15}x_2^{2^d-9}x_3^{2^{d+1}-2^{d-1}-5}x_4^{2^d-3}x_5^{2^{d-1}-2} $\cr 
496. & $x_1^{15}x_2^{2^d-9}x_3^{2^{d+1}-5}x_4^{2^{d-1}-3}x_5^{2^{d-1}-2} $& 497. & $x_1^{15}x_2^{2^{d+1}-9}x_3^{2^{d-1}-5}x_4^{2^{d-1}-3}x_5^{2^d-2} $\cr 
498. & $x_1^{15}x_2^{2^{d+1}-9}x_3^{2^{d-1}-5}x_4^{2^d-3}x_5^{2^{d-1}-2} $& 499. & $x_1^{2^{d-1}-1}x_2^{7}x_3^{2^{d-1}-5}x_4^{2^d-3}x_5^{2^{d+1}-2} $\cr 
500. & $x_1^{2^{d-1}-1}x_2^{7}x_3^{2^{d-1}-5}x_4^{2^{d+1}-3}x_5^{2^d-2} $& 501. & $x_1^{2^{d-1}-1}x_2^{7}x_3^{2^d-5}x_4^{2^{d-1}-3}x_5^{2^{d+1}-2} $\cr 
502. & $x_1^{2^{d-1}-1}x_2^{7}x_3^{2^d-5}x_4^{2^d-3}x_5^{2^{d+1}-2^{d-1}-2} $& 503. & $x_1^{2^{d-1}-1}x_2^{7}x_3^{2^d-5}x_4^{2^{d+1}-2^{d-1}-3}x_5^{2^d-2} $\cr 
504. & $x_1^{2^{d-1}-1}x_2^{7}x_3^{2^d-5}x_4^{2^{d+1}-3}x_5^{2^{d-1}-2} $& 505. & $x_1^{2^{d-1}-1}x_2^{7}x_3^{2^{d+1}-5}x_4^{2^{d-1}-3}x_5^{2^d-2} $\cr 
506. & $x_1^{2^{d-1}-1}x_2^{7}x_3^{2^{d+1}-5}x_4^{2^d-3}x_5^{2^{d-1}-2} $& 507. & $x_1^{2^d-1}x_2^{7}x_3^{2^{d-1}-5}x_4^{2^{d-1}-3}x_5^{2^{d+1}-2} $\cr 
508. & $x_1^{2^d-1}x_2^{7}x_3^{2^{d-1}-5}x_4^{2^d-3}x_5^{2^{d+1}-2^{d-1}-2} $& 509. & $x_1^{2^d-1}x_2^{7}x_3^{2^{d-1}-5}x_4^{2^{d+1}-2^{d-1}-3}x_5^{2^d-2} $\cr 
510. & $x_1^{2^d-1}x_2^{7}x_3^{2^{d-1}-5}x_4^{2^{d+1}-3}x_5^{2^{d-1}-2} $& 511. & $x_1^{2^{d+1}-1}x_2^{7}x_3^{2^{d-1}-5}x_4^{2^{d-1}-3}x_5^{2^d-2} $\cr 
512. & $x_1^{2^{d+1}-1}x_2^{7}x_3^{2^{d-1}-5}x_4^{2^d-3}x_5^{2^{d-1}-2} $& 
 \end{tabular}}

\medskip
For $d = 5$,

\medskip
 \centerline{\begin{tabular}{lll}
513. \ $x_1^{15}x_2^{15}x_3^{15}x_4^{17}x_5^{62} $& 514. \ $x_1^{15}x_2^{15}x_3^{15}x_4^{49}x_5^{30} $& 515. \ $x_1^{15}x_2^{15}x_3^{19}x_4^{13}x_5^{62} $\cr 
516. \ $x_1^{15}x_2^{15}x_3^{19}x_4^{29}x_5^{46} $&  517. \ $x_1^{15}x_2^{15}x_3^{19}x_4^{45}x_5^{30} $& 518. \ $x_1^{15}x_2^{15}x_3^{19}x_4^{61}x_5^{14} $\cr  
519. \ $x_1^{15}x_2^{15}x_3^{51}x_4^{13}x_5^{30} $& 520. \ $x_1^{15}x_2^{15}x_3^{51}x_4^{29}x_5^{14} $&\cr 
 \end{tabular}}

\medskip
For $d \geqslant 6$,

\medskip
 \centerline{\begin{tabular}{llll}
513. & $x_1^{15}x_2^{2^{d-1}-9}x_3^{2^{d-1}-5}x_4^{2^d-3}x_5^{2^{d+1}-2} $& 514. & $x_1^{15}x_2^{2^{d-1}-9}x_3^{2^{d-1}-5}x_4^{2^{d+1}-3}x_5^{2^d-2} $\cr 515. & $x_1^{15}x_2^{2^{d-1}-9}x_3^{2^d-5}x_4^{2^{d-1}-3}x_5^{2^{d+1}-2} $& 516. & $x_1^{15}x_2^{2^{d-1}-9}x_3^{2^d-5}x_4^{2^d-3}x_5^{2^{d+1}-2^{d-1}-2} $\cr 517. & $x_1^{15}x_2^{2^{d-1}-9}x_3^{2^d-5}x_4^{2^{d+1}-2^{d-1}-3}x_5^{2^d-2} $& 518. & $x_1^{15}x_2^{2^{d-1}-9}x_3^{2^d-5}x_4^{2^{d+1}-3}x_5^{2^{d-1}-2} $\cr 519. & $x_1^{15}x_2^{2^{d-1}-9}x_3^{2^{d+1}-5}x_4^{2^{d-1}-3}x_5^{2^d-2} $& 520. & $x_1^{15}x_2^{2^{d-1}-9}x_3^{2^{d+1}-5}x_4^{2^d-3}x_5^{2^{d-1}-2} $\cr 
 \end{tabular}}

\bigskip\noindent
{\bf Acknowledgment.} 
The final version of this paper was completed while the second named author was visiting the Vietnam Institute for Advanced Study in Mathematics (VIASM) from August to December, 2015. He would like to thank the VIASM for financial support and kind hospitality.

 We would like to express our warmest thanks to the referee for the careful reading and helpful suggestions.
{}

\medskip\noindent
\ \ {Department of Mathematics, Quy Nh\ohorn n University, 

\noindent
\ \ 170 An D\uhorn \ohorn ng V\uhorn \ohorn ng, Quy Nh\ohorn n, B\`inh \DJ \d inh, Vi\d\ecircumflex t Nam.}

\medskip
\noindent \ \ E-mail:  dangphuc150497@gmail.com and nguyensum@qnu.edu.vn

\end{document}